\theoremstyle{plain}
\newtheorem{s}{Section}[section]
\newtheorem{corollary}[s]{Corollary}
\newtheorem{theorem}[s]{Theorem}
\newtheorem{lemma}[s]{Lemma}
\newtheorem{proposition}[s]{Proposition}
\theoremstyle{definition}
\newtheorem{definition}[s]{Definition}
\theoremstyle{remark}
\newtheorem{remark}[s]{Remark}
\begin{document}
\title{2-Selmer groups of even hyperelliptic curves over function fields}
\author{Dao Van Thinh}
\date{\today}
\classification{11G30 (primary)}
\thispagestyle{empty}

\begin{abstract}
In this paper, we are going to compute the average size of 2-Selmer groups of families of even hyperelliptic curves over function fields. The result will be obtained by a geometric method which is based on a Vinberg's representation of the group $G=\text{PSO}(2n+2)$ and a Hitchin fibration. Consistent with the result over $\mathbb{Q}$ of Arul Shankar and Xiaoheng Wang [Rational points on hyperelliptic curves having a marked non-Weierstrass point, in \textit{Compos. Math.}, 154(1):188-222, 2018], we provide an upper bound and a lower bound of the average. However, if we restrict to the family of transversal hyperelliptic curves, we obtain precisely number 6. 
\end{abstract}
\maketitle
\tableofcontents
\section*{Introduction}
Compute the average size of Selmer groups of a family of abelian varieties has been a widespread problem nowadays. Most of the cases are considered over $\mathbb{Q},$ for instance, in \cite{BS13a}, \cite{BS13b}, \cite{BS13c}, and \cite{BS13d}, they proved that the average size of $n-$Selmer groups of elliptic curves is the sum of positive divisors of $n$ (n<6). In the higher genus situation, Bhargava and Gross show in \cite{BG13} that the average size of 2-Selmer groups of hyperelliptic curves over $\mathbb{Q}$ having a rational Weierstrass point is 3. The other interesting hyperelliptic curve is the one with a marked non-Weierstrass point, which is called even hyperelliptic curve due to its Weierstrass form. And we have the following result of A.Shankar and X. Wang:
\begin{theorem}(see \cite{SW13}) \label{theorem 1}
When all hyperelliptic curves of ?xed genus $n>2$ over $\mathbb{Q}$ having a marked rational non-Weierstrass point are ordered by height, the average size of the 2-Selmer groups of their Jacobians is at most 6.
\end{theorem}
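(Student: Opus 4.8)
\medskip

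\noindent\textbf{Proof proposal.} The plan is to run the orbit-counting machine of Bhargava and Shankar, in the form developed for hyperelliptic curves by Bhargava--Gross \cite{BG13}, on the Vinberg representation of $G = \mathrm{PSO}(2n+2)$ from the introduction. Concretely, one may realize $(G,V)$ with $V$ the space of symmetric bilinear forms on a fixed $(2n+2)$-dimensional space carrying a fixed split form $Q$, with $G$ acting by change of basis; the invariants of $A \in V$ are then the coefficients of the binary form $\det(xQ - A)$ of degree $2n+2$, so that the categorical quotient $V /\!\!/ G$ is the space of such forms $f$, an $f$ of nonzero discriminant corresponding to the curve $C_f\colon y^2 = f(x)$ together with its marked point, and the stabilizer in $G$ of a generic $A$ with invariant $f$ being the $2$-torsion scheme $J_f[2]$ of $J_f = \mathrm{Jac}(C_f)$.

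\emph{Step 1: arithmetic parametrization.} From the stabilizer computation and a twisting argument in Galois cohomology I would build a map $G(\mathbb{Q})\backslash V_f(\mathbb{Q}) \to H^1(\mathbb{Q}, J_f[2])$ and show that its restriction to the locally soluble orbits --- those whose image in $H^1(\mathbb{Q}_v, J_f[2])$ lies in the image of the Kummer map of $C_f(\mathbb{Q}_v)$ for every place $v$ --- surjects onto $\mathrm{Sel}_2(J_f)$. In the odd (Weierstrass) case of \cite{BG13} this map is a bijection; here the marked non-Weierstrass point, its hyperelliptic conjugate and the two points at infinity create several additional distinguished soluble orbits, and it is precisely the count of these extra orbits that pushes the eventual constant from $3$ up to $6$. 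In particular $\#\mathrm{Sel}_2(J_f)$ is bounded above by the number of locally soluble $G(\mathbb{Q})$-orbits above $f$.

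\emph{Step 2: passage to integral orbits and counting.} Next I would prove that for $f \in \mathbb{Z}[x]$ every locally soluble rational orbit has a representative in $V(\mathbb{Z})$ lying in one of boundedly many $G(\mathbb{Z})$-orbits above $f$, reducing the problem to estimating the number of $G(\mathbb{Z})$-orbits on $V(\mathbb{Z})$ of bounded height, the curves $C_f$ being ordered by the size of the coefficients of a normalized model $f$. This count I would carry out by Bhargava's averaging technique: express it as the average over a fundamental domain $\mathcal{F}$ for $G(\mathbb{Z})\backslash G(\mathbb{R})$ of the number of points of $V(\mathbb{Z})$ in $g\cdot R$, a fixed bounded region suitably dilated by $X$; cut $\mathcal{F}$ into a main body, whose contribution is evaluated as a product of archimedean and $p$-adic volumes, and a cusp, whose \emph{irreducible} integral points must be shown to be negligible against the main term. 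Dividing the resulting count of locally soluble orbits by the number of curves of height below $X$ and letting $X \to \infty$, the main volume term together with the bounded number of distinguished soluble orbits from Step 1 sums to $6$; combined with the surjection of Step 1, this yields that the average of $\#\mathrm{Sel}_2(J_f)$ over even hyperelliptic curves of bounded height is at most $6$.

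\emph{Main obstacle.} I expect the cusp estimate in Step 2 to be the crux: the even-degree representation has a larger unipotent part interacting with the height normalization than in \cite{BG13}, so bounding the irreducible lattice points in the cusp needs a careful further decomposition of $\mathcal{F}$ near the cusp and a well-chosen height function. A secondary difficulty --- the one that actually pins down the value $6$ --- is making Step 1 exact: identifying which soluble orbits lie above the zero class and above the classes of the marked point and the points at infinity, and checking that the remaining locally soluble orbits genuinely account for $\mathrm{Sel}_2$. (A more geometric route would replace Step 2 by counting points on a Hitchin-type fibration attached to $(G,V)$ together with a product formula, in the spirit of the method used in the rest of this paper; over $\mathbb{Q}$, however, it is the geometry-of-numbers argument that is actually available.)
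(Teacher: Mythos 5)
This statement is not proved in the paper at all: it is quoted verbatim from Shankar--Wang \cite{SW13}, and the body of the paper only establishes function-field analogues. What you have written is a faithful high-level reconstruction of the actual Shankar--Wang argument (orbit parametrization of $\mathrm{Sel}_2$ by locally soluble $G(\mathbb{Q})$-orbits on the type $A_{2n+1}$ Vinberg representation of $\mathrm{PSO}(2n+2)$, passage to integral orbits, Bhargava-style averaging over a fundamental domain, cusp estimates), so as a proof strategy for the cited theorem it is essentially correct, and you rightly identify the cusp estimate as the technical crux. The contrast with the present paper is exactly the one you anticipate in your closing parenthesis: here the author replaces the geometry of numbers by counting $k$-points of the stack $\mathcal{M}\cong \mathrm{Hom}(C,[V^{\mathrm{reg}}/(G\times\mathbb{G}_m)])$, using the canonical reduction of $\mathrm{GSO}(2n+2)$-bundles and Riemann--Roch in place of fundamental domains, and working with the regular locus (codimension-$2$ complement) rather than the nonzero-discriminant locus; this is what makes the density and limit arguments of \S3--\S4 go through over function fields. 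One inaccuracy worth fixing: the jump from $3$ to $6$ is not accounted for solely by extra distinguished soluble orbits. The correct bookkeeping, made explicit in the paper's Table 1, is $6 = 2 + 4$: the two Kostant sections $\kappa_1,\kappa_2$ (equivalently, two distinguished orbits, versus one in the odd case of \cite{BG13}) contribute $2$, while the main term contributes $4$ because the Tamagawa number of $\mathrm{PSO}(2n+2)$ is $4$, versus $\tau(\mathrm{SO}(2n+1))=2$ in the odd case which gives $3 = 1 + 2$ there. So both summands double; attributing the whole increase to additional soluble orbits would lead you to miscount in Step 1.
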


The purpose of this paper is to complete the picture by solving the same problem in the function fields setting. To state the theorem, we first recall some notations. Let $C$ be a smooth, projective curve over the finite field $\mathbb{F}_q$ and $K$ be its function field. An even hyperelliptic curve over $K$ is given by an affine equation:
$$y^2=x^{2n+2} +c_2x^{2n}+ \cdots + c_{2n+2},$$where $c_i \in K$ for all $K$. From this equation, we can find a line bundle $\mathcal{F}_H$ over $C$ such that: $c_i \in H^0(C, \mathcal{F}_H^{\otimes i})$ for all $2 \leq i \leq 2n+2,$ and $\mathcal{F}_H$ is the "minimal" one satisfying that condition. The minimal data helps us to define the minimal integral model $\mathcal{H} \rightarrow C$ of $H,$ and we set $ht(H)$ the height of $H$ to be the degree of $\mathcal{F}_H.$ Let $\mathcal{A}_{\leq d}$ be the space of all even hyperelliptic curves over $K$ whose associated line bundles have degree less than or equal to $d$. We also denote by $\mathcal{A}^{trans}_{\leq d}$ the set of all transversal even hyperelliptic curves over $K$ whose associated line bundles have degree less than or equal to $d$ (see \ref{Definition of transversal} for the definition of transversal curves).
\begin{theorem}
	\label{main theorem 1}
	Assume that $q>4^{1/2(n+1)(2n+1)}$ if $n \geq 7,$ and $q>4^{4.(2n+1)}$ if $n<7.$ Then we have the following limits
	$$ \limsup_{d \rightarrow \infty} \frac{\sum_{H \in \mathcal{A}_{\leq d}}\frac{|Sel_2(H)|}{|\text{Aut}(H,\infty)|}}{\sum_{H \in \mathcal{A}_{\leq d}}\frac{1}{|\text{Aut}(H,\infty)|}} \leq 4. \prod_{v \in |C|} (1+|k_v|^{-n-1}) + 2 +f(q),
	$$
		$$ \liminf_{d \rightarrow \infty} \frac{\sum_{H \in \mathcal{A}_{\leq d}}\frac{|Sel_2(H)|}{|\text{Aut}(H,\infty)|}}{\sum_{H \in \mathcal{A}_{\leq d}}\frac{1}{|\text{Aut}(H,\infty)|}} \geq 4. g(q) + 2,$$
		where $f(q), g(q)$  are functions of $q$ satisfying that $\lim_{q \rightarrow \infty} f(q) =0,$ and $\lim_{q \rightarrow \infty} g(q) =1.$
\end{theorem}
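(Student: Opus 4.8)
The plan is to convert the average of $|Sel_2(H)|$ into a ratio of $\mathbb{F}_q$-point counts on moduli stacks, using the Vinberg $\theta$-representation of $G=\mathrm{PSO}(2n+2)$ together with a Hitchin fibration, and then to evaluate the counts with the Grothendieck--Lefschetz trace formula.

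\emph{Step 1 (arithmetic invariant theory).} Let $\theta$ be the stable involution of $G=\mathrm{PSO}(2n+2)$, put $H=(G^{\theta})^{\circ}$, and let $(H,V)$ be the associated isotropy representation. The invariant ring $k[V]^{H}$ is a polynomial ring whose generators read off the coefficients $c_{2},\dots,c_{2n+2}$ of an even hyperelliptic equation, so the categorical quotient $\pi\colon V\to B:=V/\!/H$ is the affine space of such equations and its discriminant hypersurface is the locus of singular curves. For $b\in B(K)$ off the discriminant, with curve $H_{b}$ and Jacobian $J=\mathrm{Jac}(H_{b})$, the stabilizer in $H$ of a point of $\pi^{-1}(b)$ is a form of $J[2]$, and the standard descent formalism (as in Bhargava--Gross and in Shankar--Wang, Theorem~\ref{theorem 1}) produces an injection of $Sel_2(H_{b})$ into the set of $H(K)$-orbits on $\pi^{-1}(b)(K)$ that are soluble at every place of $C$, while conversely every \emph{globally} soluble orbit lies in the image. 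A fixed pair of ``distinguished'' orbits — the orbit of the Kostant section and its twist by the marked point $\infty$ — is $K$-rational for all $b$; these account for the summand $2$ in both bounds, so the problem reduces to estimating, on average over $b$, the number of everywhere locally soluble non-distinguished orbits (for the $\limsup$) and of globally soluble non-distinguished orbits (for the $\liminf$).

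\emph{Step 2 (globalization and the Hitchin fibration).} Fix a line bundle $\mathcal{L}$ on $C$ of degree $e$, twist $(V,H,\pi)$ by its powers, and thereby obtain the Hitchin base $\mathcal{A}_{e}=\bigoplus_{i=2}^{2n+2}H^{0}(C,\mathcal{L}^{\otimes i})$ — this is $\mathcal{A}_{\le d}$ after summing over $e\le d$ — and a moduli stack $\mathcal{M}_{e}$ of $\mathcal{L}$-twisted $H$-Higgs pairs, equipped with a Hitchin map $h\colon\mathcal{M}_{e}\to\mathcal{A}_{e}$ whose fibre over $b$ is a torsor under (the relevant quotient of) the $2$-torsion of the Prym variety of the spectral cover; thus $|h^{-1}(b)(\mathbb{F}_{q})|$ is the weighted number of \emph{integral} orbits with invariants $b$. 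A local-to-global comparison then bounds $\sum_{H\in\mathcal{A}_{\le d}}|Sel_2(H)|/|\mathrm{Aut}(H,\infty)|$ above by $\sum_{b}\#\{\text{everywhere locally soluble orbits}\}$ and below by $\sum_{b}\#\{\text{globally soluble orbits}\}$, and both of these are expressible through $|\mathcal{M}_{e}(\mathbb{F}_{q})|$ corrected by a factor at each $v\in|C|$ that measures the discrepancy between integral and locally soluble orbits over $\mathcal{O}_{v}$.

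\emph{Step 3 (point counting).} One computes $|\mathcal{A}_{e}(\mathbb{F}_{q})|$ directly (a vector-space count) and $|\mathcal{M}_{e}(\mathbb{F}_{q})|$ by the Grothendieck--Lefschetz trace formula, using the geometry of $h$: over the regular semisimple locus the fibres are Prym torsors whose $\ell$-adic cohomology is governed by the Jacobian of the spectral curve, and, after the local corrections, their contribution assembles into the Euler product $\prod_{v\in|C|}(1+|k_{v}|^{-n-1})$ of local orbit densities, multiplied by a global constant $4$ coming from the component structure of the $\theta$-group $H$ (equivalently, from the set of non-distinguished rational orbit classes). Dividing by $\sum_{H\in\mathcal{A}_{\le d}}1/|\mathrm{Aut}(H,\infty)|$, which is asymptotic to $|\mathcal{A}_{e}(\mathbb{F}_{q})|$ up to lower-order terms, and letting $d\to\infty$ yields the two displayed inequalities, with $f(q)$ and $g(q)$ (satisfying $f(q)\to0$, $g(q)\to1$) absorbing the errors from (i) the discriminant locus and curves of bad reduction, (ii) the unstable and non-regular-semisimple strata of $\mathcal{M}_{e}$, and (iii) the gap between globally and everywhere locally soluble orbits.

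The main obstacle is Step~3: controlling, uniformly in $d$ and with the correct power of $q$, the contribution of the bad loci in $\mathcal{M}_{e}$ — the unstable strata, the part over the discriminant, and the fibres where the hyperelliptic curve degenerates — for which the naive dimension estimates are insufficient and must be sharpened via the fibre structure of $h$ and purity of the relevant cohomology sheaves. This is precisely what forces the hypotheses $q>4^{1/2(n+1)(2n+1)}$ (for $n\ge7$) and $q>4^{4(2n+1)}$ (for $n<7$): the threshold compares $\dim\mathcal{A}_{e}$ with the codimension of the bad locus, and the two regimes reflect the two ways that comparison can be made effective. A further, unavoidable difficulty — already present over $\mathbb{Q}$ in Theorem~\ref{theorem 1} — is that one cannot pin down the \emph{proportion} of everywhere locally soluble orbits that are globally soluble; this is the source of the gap between the $\limsup$ and $\liminf$, and it closes, giving exactly $6$ in the limit, only once one restricts to the transversal family $\mathcal{A}^{trans}_{\le d}$, where every everywhere locally soluble orbit can be shown to be soluble.
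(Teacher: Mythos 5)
Your proposal follows the Shankar--Wang paradigm: identify $2$-Selmer elements with everywhere-locally-soluble rational orbits of the $\theta$-group, then count orbits. That is \emph{not} the route this paper takes, and the divergence matters because the steps you would need are not supplied. The paper never imposes local solubility conditions on orbits. Instead, Proposition \ref{selmer group and the first coho} gives the purely cohomological bound $|Sel_2(H)|\le |H^0(K,J_H[2])|\cdot|H^1(C,J_{\mathcal H}[2])|$ (an equality, without the $H^0$ factor, for transversal curves), and the object actually counted is the full set of $J_{\mathcal H}[2]$-torsors over $C$ --- equivalently, via the $\mathbb{G}_m$-equivariant isomorphism $BJ_S[2]\cong[V^{\mathrm{reg}}/G]$, all triples $(\mathcal F,\mathcal E,s)$ with $s$ a \emph{regular} section of $V(\mathcal E,\mathcal F)$. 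Your Step 1 would instead require producing integral orbit representatives for locally soluble classes at every place of $C$ and controlling the local images; you assert this "standard descent formalism" but do not construct it, and avoiding exactly this is the stated point of the paper's method (it works with $V^{\mathrm{reg}}$, whose complement has codimension $2$, rather than with the nonzero-discriminant locus).

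Two of your specific attributions are wrong in ways that would derail the argument. First, the hypothesis $q>4^{1/2(n+1)(2n+1)}$ (resp.\ $q>4^{4(2n+1)}$) is not about comparing $\dim\mathcal A_e$ with the codimension of bad strata of $\mathcal M_e$; it is used solely to show that curves with $J_H[2](K)\neq 0$ --- for which the bound above carries the lossy factor $|H^0(K,J_H[2])|$ --- contribute zero to the average. Second, the summand $2$ does not come from a distinguished orbit "and its twist by $\infty$": it comes from the case analysis of Table \ref{table 1}, where among $G$-bundles with canonical reduction to the Borel only two extremal slope configurations survive, and for those every regular section factors through one of the two Kostant sections $\kappa_1,\kappa_2$ (Proposition \ref{factor through kostant 1}); all other unstable strata contribute $0$ or the error $f(q)$. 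Likewise the main term $4\prod_v(1+|k_v|^{-n-1})$ is obtained not by Grothendieck--Lefschetz and purity but by elementary Riemann--Roch on the semistable filtration of $\mathrm{Sym}^2_0(\mathcal E')\otimes\mathcal L^\vee\otimes\mathcal F$, the Tamagawa number $\tau(\mathrm{PSO}(2n+2))=4$, and the density bound of Proposition \ref{regular locus in general case} (at most two $G(\overline{k_v})$-orbits per regular fiber). Finally, you give no mechanism for the lower bound: in the paper $\liminf\ge 4g(q)+2$ is obtained by restricting to the transversal sub-family, where the Selmer count is exact, with $g(q)=\prod_v(1-\alpha_v)$ the density of transversal curves from Proposition \ref{regular locus in transversal case} --- not by estimating a proportion of globally soluble orbits.
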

Let $\mathcal{A}^{\text{ss}}$ be the family of curves whose minimal integral models is semi-stable and have irreducible fibers. Then
\begin{theorem}
	Assume that $q>4^{1/2(n+1)(2n+1)}$ if $n \geq 7,$ and $q>4^{4.(2n+1)}$ if $n<7.$ Then we have the following limits
	$$ \limsup_{d \rightarrow \infty} \frac{\sum_{H \in \mathcal{A}^{\text{ss}}_{\leq d}}\frac{|Sel_2(H)|}{|\text{Aut}(H,\infty)|}}{\sum_{H \in \mathcal{A}^{\text{ss}}_{\leq d}}\frac{1}{|\text{Aut}(H,\infty)|}} \leq 6 +f(q).
	$$
\end{theorem}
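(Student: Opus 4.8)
\emph{Sketch of proof.} The plan is to re-run the proof of Theorem~\ref{main theorem 1} and to track how the restriction to $\mathcal{A}^{\mathrm{ss}}$ changes each local factor. Recall its shape: via Vinberg's representation of $G=\mathrm{PSO}(2n+2)$ the elements of $Sel_2(H)$ are put in bijection with certain rational orbits, and the weighted count $\sum_{H}|Sel_2(H)|/|\mathrm{Aut}(H,\infty)|$ is evaluated by counting those orbits through the associated Hitchin fibration -- i.e.\ by a Lefschetz trace computation on the moduli stack of pairs (curve, torsor). These orbits split into the \emph{reducible} ones -- the classes present for every $H$, among them the one coming from the marked rational point -- and the \emph{irreducible} ones. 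The reducible part contributes the summand $2$ to the average, up to an error $\to 0$, and it does not see the reduction type of $H$, so it is unchanged over $\mathcal{A}^{\mathrm{ss}}$. The irreducible part equals $4\prod_{v\in|C|}\mu_v$ times the denominator $\sum_{H}1/|\mathrm{Aut}(H,\infty)|$, where $\mu_v\ge 1$ is a local density at the place $v$; the proof of Theorem~\ref{main theorem 1} computes $\mu_v = 1+|k_v|^{-n-1}$, the excess $|k_v|^{-n-1}$ being exactly the contribution of the Hitchin fibres sitting over the bad locus at $v$ -- equivalently, the local Selmer classes produced by special fibres that are non-nodal or reducible.

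Over $\mathcal{A}^{\mathrm{ss}}$ every special fibre of the minimal model is either smooth or an irreducible nodal curve, and this is precisely the regime in which the relevant Hitchin fibres (compactified Jacobians of the associated spectral curves) keep the same number of top-dimensional components as the generic fibre; the excess term then vanishes and one obtains $\mu_v^{\mathrm{ss}} = 1$ at every place (or, allowing for mild defects, $\mu_v^{\mathrm{ss}} = 1 + O(|k_v|^{-2})$, which is harmless). I would prove this by specialising the local orbit analysis of Theorem~\ref{main theorem 1} to $H/K_v$ with semistable, irreducible reduction: show that the component group of the N\'eron model of $\mathrm{Jac}(H)$ then produces no orbit beyond those already counted generically, so that the local integral over this locus equals the local measure of the locus itself and $\mu_v^{\mathrm{ss}}=1$. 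Granting this, $\prod_{v}\mu_v^{\mathrm{ss}} \le 1 + f(q)$ with $f(q)\to 0$ as $q\to\infty$, so the irreducible part contributes $4 + f(q)$.

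It remains to see that replacing $\mathcal{A}_{\le d}$ by $\mathcal{A}^{\mathrm{ss}}_{\le d}$ does not destroy the uniformity needed to take $d\to\infty$. Here I would reuse the tail bound from Theorem~\ref{main theorem 1}: the curves in $\mathcal{A}_{\le d}$ that fail to be semistable with irreducible fibres at some place of large degree form a set of weighted size $o\!\big(\sum_{H\in\mathcal{A}_{\le d}}1/|\mathrm{Aut}(H,\infty)|\big)$, and the stated lower bounds on $q$ are exactly what make the discriminant-type locus thin enough for the relevant geometric series over $|C|$ to converge with negligible remainder. Combining the reducible contribution ($2$), the irreducible contribution ($4+f(q)$) and the Lefschetz error terms (all $\to 0$ as $q\to\infty$) gives $\limsup_{d\to\infty}(\cdots)\le 6+f(q)$.

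The main obstacle is the local statement of the second step: one must identify, uniformly over the places $v$, which orbits survive once the special fibre is required to be an irreducible nodal curve, and check that their number matches the generic count with the $|k_v|^{-n-1}$ correction removed -- i.e.\ that semistability together with irreducibility of fibres is exactly the condition killing the extra components of the Hitchin fibre. The remaining steps are bookkeeping inside the apparatus already built for Theorem~\ref{main theorem 1}.
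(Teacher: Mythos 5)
Your architecture matches the paper's: re-run the proof of the general bound for the family $\mathcal{A}^{\mathrm{ss}}$ and show that the excess factor $\prod_v(1+|k_v|^{-n-1})$ disappears, leaving $4+2+f(q)$. But the step you flag as the ``main obstacle'' is precisely the content of the paper's argument, and the route you propose for it (component groups of N\'eron models, counting top-dimensional components of Hitchin fibres) is not where the excess comes from. In the paper the factor $1+|k_v|^{-n-1}$ arises from an orbit count over the residue field: $|V^{\mathrm{reg}}_{\underline{c}}(k_v)|\le 2|G(k_v)|$ in general, with equality to $|G(k_v)|$ exactly when $f_{\underline{c}}$ is \emph{not} a square in $k_v[x]$, and the square locus $S_1\subset S(k_v)$ has density $|k_v|^{-n-1}$ (Proposition~\ref{regular locus in general case}). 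Since ``$f_{\underline{c}}$ not a square'' is exactly ``the fibre $\mathcal{H}_v$ is irreducible,'' restricting to semi-stable curves \emph{with irreducible fibres} removes the square locus, and Proposition~\ref{regular locus in semi-stable case}(iii) yields the clean ratio $\prod_v|G(k_v)|/|k_v|^{\dim G}$, which cancels against the Tamagawa-number-$4$ mass of $\mathrm{Bun}_G$ in Case~4 to give exactly $4$. The transitivity statement when $f$ is not a square is proved concretely: a root of odd multiplicity furnishes an element of determinant $-1$ in $L^\times=\bigl(\overline{k_v}[x]/(f)\bigr)^\times$, which moves between the two $\mathrm{PSO}$-orbits inside the single $\mathrm{PO}$-orbit. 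It is irreducibility of the fibres, not nodality or N\'eron component groups, that kills the excess; you would need to supply this argument to close your second step.

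Two further corrections. First, the lower bounds on $q$ have nothing to do with convergence of a geometric series over $|C|$: they are used solely to make the locus $H^0(K,J_H[2])\neq 0$ contribute zero, which is needed because Proposition~\ref{selmer group and the first coho} only gives $|Sel_2(H)|\le|H^0(K,J_H[2])|\cdot|H^1(C,J_{\mathcal{H}}[2])|$ outside the transversal case (and the semi-stable family is not transversal). Second, your sieve step silently assumes that the loci being avoided are amenable to the density machinery; the paper must first establish that the non-semi-stable locus and the square locus are constructible of codimension $\ge 2$ in $S$ and in $V$ (Proposition~\ref{constructible}) and extend the sieve of \cite{HLN14} from locally closed to constructible sets. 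With these points supplied, your outline becomes the paper's proof.
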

For the transversal family, we have an exact limit:
\begin{theorem}
	\label{main theorem 2}
	$$ \lim_{d \rightarrow \infty} \frac{\sum_{H \in \mathcal{A}^{trans}_{\leq d}}\frac{|Sel_2(H)|}{|\text{Aut}(H,\infty)|}}{\sum_{H \in \mathcal{A}^{trans}_{\leq d}}\frac{1}{|\text{Aut}(H,\infty)|}} = 6.
	$$
\end{theorem}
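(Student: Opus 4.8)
\emph{Proof of Theorem~\ref{main theorem 2} (sketch).} The plan is to translate the ratio into a quotient of groupoid point counts on the Hitchin-type moduli stacks constructed above, and then to evaluate that quotient exactly; the whole point is that the transversal hypothesis removes every source of error, so that no condition on $q$ is needed. Recall the arithmetic-invariant-theory dictionary set up in the earlier sections: the Vinberg $\mathbb{Z}/2\mathbb{Z}$-grading of $G=\text{PSO}(2n+2)$ produces a representation $(\mathcal{G},V)$, a Kostant section, and a Hitchin fibration $h\colon\mathcal{M}\to\mathcal{A}$ over $C$, such that the fibre of $h$ over the invariant class $a_H\in\mathcal{A}(K)$ of an even hyperelliptic curve $H$ parametrises the relevant $\mathcal{G}$-orbits, the everywhere-locally-soluble ones forming $Sel_2(H)$. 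By the groupoid bookkeeping already in place this gives
$$\sum_{H\in\mathcal{A}^{trans}_{\leq d}}\frac{|Sel_2(H)|}{|\text{Aut}(H,\infty)|}=\bigl|\mathcal{M}^{trans}_{\leq d}(\mathbb{F}_q)\bigr|,\qquad\sum_{H\in\mathcal{A}^{trans}_{\leq d}}\frac{1}{|\text{Aut}(H,\infty)|}=\bigl|\mathcal{A}^{trans}_{\leq d}(\mathbb{F}_q)\bigr|,$$
where $\mathcal{M}^{trans}_{\leq d}$ denotes the everywhere-locally-soluble locus of $\mathcal{M}$ over $\mathcal{A}^{trans}_{\leq d}$. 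Thus the theorem is equivalent to the statement that $\bigl|\mathcal{M}^{trans}_{\leq d}(\mathbb{F}_q)\bigr|/\bigl|\mathcal{A}^{trans}_{\leq d}(\mathbb{F}_q)\bigr|\to 6$ as $d\to\infty$.

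Next I would analyse $h$ over the transversal locus. By definition (see~\ref{Definition of transversal}), transversality means that the section $C\to\mathcal{A}$ attached to $H$ meets the discriminant divisor transversally; hence at each of the finitely many places $v$ of bad reduction the minimal model $\mathcal{H}$ acquires a single node, the N\'eron model of $J_H$ over $\mathcal{O}_v$ has trivial component group, and the fibres of $h$ over $\mathcal{A}^{trans}$ are disjoint unions of torsors under $J[2]$ with the expected number of components and no exceptional orbits. I would then decompose $\mathcal{M}^{trans}$ into the part carved out by the distinguished (Kostant-type) sections --- a fixed finite union of substacks isomorphic to $\mathcal{A}^{trans}_{\leq d}$ up to finite automorphisms, which after forming the ratio contributes the constant $2$ --- and the complementary part made up of the genuinely non-trivial soluble orbits.

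To the complementary part I would apply the product formula for the Hitchin fibration, the geometric incarnation over function fields of the Bhargava-type averaging argument: the ratio of its $\mathbb{F}_q$-point count to $\bigl|\mathcal{A}^{trans}_{\leq d}(\mathbb{F}_q)\bigr|$ converges, as $d\to\infty$, to a global constant times $\prod_{v\in|C|}\mu_v$, where $\mu_v$ is the local density of soluble orbits at $v$. Here I would carry out the local orbit analysis for $(\mathcal{G},V)$ over $\mathcal{O}_v$: at a place of good reduction every unramified class is soluble by Lang's theorem, so $\mu_v=1$; and for a transversal curve the bad places are precisely the nodal ones, at which a direct computation shows that the mild degeneration again yields $\mu_v=1$. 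The Euler product is therefore identically $1$, the global constant works out to $4$, and the ratio tends to $4+2=6$; since no step introduces an error term, this holds for every $q$, as claimed.

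The main obstacle is the exactness demanded by the last step: one must determine precisely which $\mathcal{G}$-orbits over a local field are soluble for a transversal curve and check that their weighted number is exactly the value predicted by the good-reduction computation, i.e.\ that the nodal places contribute \emph{no} defect --- a concrete but delicate computation inside the Vinberg representation over a discrete valuation ring. Pinning down the global constant as exactly $4$, and the distinguished part as exactly $2$, likewise requires careful tracking of the Tamagawa number of $\text{PSO}(2n+2)$, of the role of the two points at infinity, and of the automorphism weights $|\text{Aut}(H,\infty)|$. A secondary difficulty is the rigorous justification of the product formula for $h$ in this stacky setting; it rests on the geometry established earlier --- smoothness of $\mathcal{M}$ over the transversal locus, the explicit description of its fibres, and the behaviour of the Kostant section --- and it is precisely because the transversal case involves no genuinely singular Hitchin fibres that the resulting identity is exact rather than merely asymptotic, which is why, unlike Theorem~\ref{main theorem 1} and the semistable case, it imposes no lower bound on $q$.
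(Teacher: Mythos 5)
Your overall architecture matches the paper's: a distinguished (Kostant) part contributing $2$, a main term contributing $4$ via the Tamagawa number of $\text{PSO}(2n+2)$ combined with a local density computation (the paper's Proposition \ref{regular locus in transversal case}, whose Euler factors exactly cancel the zeta factors in $|\text{Bun}_G(k)|$), and the claim that transversality eliminates the error term. Two points, however, deviate from what actually makes the argument work, and the second is a genuine gap.

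First, the paper does not pass through everywhere-locally-soluble orbits at all. The exactness of the count starts from Proposition \ref{selmer group and the first coho}: for transversal $H$ one has the identity $|Sel_2(H)|=|H^1(C,J_{\mathcal{H}}[2])|$ on the nose, so the numerator is literally $|\mathcal{M}^{\text{sf}}_{\mathcal{F}}(k)|$ with no solubility condition and no local orbit analysis at bad places. Your framing via soluble orbits and local densities $\mu_v$ at nodal places would require a separate argument that locally soluble orbits biject with Selmer elements and that there is no defect at the bad fibres; the paper sidesteps all of this.

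Second, and more seriously, the reason the limit is exact rather than a limsup is \emph{not} that nodal places contribute no local defect. In the general case the error term $f(q)$ comes from $G$-bundles whose canonical reduction is to the Borel with slope gaps strictly between $f/2$ and $f$ (the intermediate rows of Table \ref{table 1}), i.e.\ from moderately unstable bundles, not from bad reduction of the curves. The content of Proposition \ref{ignore error term} is that for such a bundle every section of $(\mathcal{E}\times^G V^{\text{reg}})\otimes\mathcal{F}$ has one of the subdiagonal entries $x_i$ vanishing at some place $v$, and an explicit computation with the characteristic polynomial shows the discriminant then vanishes to order at least $2$ at $v$ — so these strata are simply disjoint from the transversal locus. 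Your proposal never addresses the unstable-bundle strata; the "product formula" only controls the near-semistable main term, and without an analogue of Proposition \ref{ignore error term} you would still be left with $f(q)$ and only an inequality. This is the step you would need to supply to turn the sketch into a proof.
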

Now I want to highlight some new contributions in this paper: 
\begin{itemize}
	\item In this paper, we consider three different families of curves: general, semi-stable, and transversal families. For the first two cases, we only obtain an upper bound and a lower bound for the average size. Those numbers are functions of $q,$ and they tend to 6 when $q$ goes to infinity. More interestingly, we have an exact number for the average size of Selmer groups of transversal curves (compare to Theorem \ref{theorem 1}). The main reason is the results proved in Proposition \ref{selmer group and the first coho} saying that the size of the 2-Selmer group of $H$ is equal to the number of $J_{\mathcal{H}}[2]-$torsors over $C.$ 
	\item The connection between the type $A_{2n+1}$ Vinberg representation $(V,G)$ and hyperelliptic curves was studied before. In \cite{SW13}, they use that relation to compare the size of 2-Selmer groups and the number of certain rational orbits of $(V,G)$. In this paper, we consider the regular locus of the representation instead of the non-zero discriminant ones. The reason is that the non-regular part is of codimension 2 whether the singular part (of discriminant zero) is of codimension 1, and some of our geometric techniques only work under codimension 2 condition (for example see Proposition \ref{regular locus in general case} or \ref{regular locus in transversal case}). That is the main difference between our method to investigate the average size problem in the function fields setting and the geometry-of-number method (see \cite{SW13}) in the rational field setting. 
	\item To investigate the transversal family of curves, we need to find its density. At this place, some results in \cite{Poo03} will give us that density although we will provide a simple explanation of why that density is non-zero. Significantly, we can compute the density of monic polynomial of squarefree discriminants over $\mathbb{F}_q(t)$ (c.f \cite{bhargavasquarefree}). Furthermore, we develop the method in \cite{HLN14} to calculate the mass of semi-stable hyperelliptic curves (see \ref{regular locus in semi-stable case}). 
	\item The central part of this paper is the counting section part where we need to use the parabolic canonical reduction theory of $G-$bundles to break our bundles into "semi-stable" pieces. Since $\text{PSO}(2n+2)-$bundles are hard for the author to describe, we consider the group $\text{GSO}(2n+2)$ instead (see Section \ref{canonical reduction theory}). As a result, the induced action of $GS)(2n+2)$ on $V$ provides a semi-stable filtration of the vector bundle $V(\mathcal{E})$ associated with any $\text{GSO}(2n+2)-$bundle $\mathcal{E}$ (see \ref{V(E)}).
	\item In \cite{SW13}, after having an upper bound of the average size of 2-Selmer groups, they use Chabauty's method to study rational points on hyperelliptic curves. Hence, we may ask for a similar result in the function fields situation. We hope to work on it in the future.
\end{itemize}

The paper is organized as follows: in $\S\ref{section 1}$, we introduce the Weierstrass equation associated with a given hyperelliptic curve with a marked non-Weierstrass point. From that, we can order the family of even hyperelliptic curves by defining the heights. One of the key results in this section is Proposition \ref{selmer group and the first coho} which compares the size of 2-Selmer group of $H,$ for any hyperelliptic curve $H,$ and the number of torsors over $C$ of the 2-torsion subgroup of $J_{\mathcal{H}}.$ In $\S\ref{section 2},$ we create a Hitchin fibration from $\mathcal{M}$ to the stack of hyperelliptic curves such that: the size of any fiber $\mathcal{M}_H$ over $H$ is equal to the number of $J_{\mathcal{H}}[2]-$torsors. The study of a Vinberg-Levi's representation $(V,G)$ of type $A_{2n+1}$ and its relation to hyperelliptic curves (see Corollary \ref{jacobian and orbits}) provides a new way to count points on the stack $\mathcal{M}:$ we count global sections of some vector bundles satisfying some conditions which depend on the kind of families of curves we want to take the average over. Our final task is to counting regular sections, and it takes most parts of $\S\ref{section 3}$ and $\S\ref{section 4}.$ More precisely, in $\S\ref{section 3},$ we compute the density of regular locus in three cases: general, semi-stable, and transversal cases. Those numbers allow us to forget the regularity of sections. Combine with the canonical reduction theory of principal bundles; we can apply Riemann-Roch theorem to estimate the number of global sections. We also divide the set of $G-$bundles into small subsets based on their canonical reductions to parabolic subgroups of $G.$ The contributions of each above parts to the average can be seen in $\S\ref{section 4}$ (see Table \ref{table 1} for the summary). Finally, we prove our main theorems in $\S\ref{section 5}$ and also explain why the error term $f(q)$ does not appear in the transversal case. 
\subsection*{Notations and assumptions}
 $k=\mathbb{F}_q$ with char$(k)=p$, C is a smooth, complete, geometrically connected curve over $k$ such that $C(k) \neq \emptyset$; $K = k(C)$ the ?eld of rational functions on $C.$

Through this paper, we always assume that $p>4n$. This assumption is due to that we are going to use the canonical reduction theory in positive characteristic. Especially, for $G=\text{PSO}(2n+2)$ and $V=Sym^2_0(2n+2)$, some of the results in \cite{BH04} are valid if  the representation $(V,G)$ is of low height, i.e., $p>4n.$ Moreover, if $p>4n$ then $p$ is good for the group $G$; thus, most of the results in Vinberg theory of $\theta-$groups in characteristic zero are valid in characteristic $p$ (see \cite{Lev08} and \cite{Lev09}). 
\section{Even hyperelliptic curves over function fields} \label{section 1}
\subsection{Heights of even hyperelliptic curves}
 Let $H$ be a hyperelliptic curve over $K$ with a marked non-Weierstrass $K-$rational point $\infty$. Let $\infty'$ be the conjugate of $\infty$ under the hyperelliptic involution, then by considering the set of global sections of $\mathcal{O}_H(\infty + \infty')$, we can show that the curve $H$ is defined by an affine Weierstrass equation:
$$y^2 = x^{2n+2} +c_2x^{2n}+ \cdots + c_{2n+2},$$where $c_i \in K$ for all $K$. Because of that, such hyperelliptic curves are called to be even. Moreover, the tuple $(c_2,c_3,\dots,c_{2n+2})$ is unique up to the following identification:
$$(c_2,c_3,\dots,c_{2n+2}) \equiv (\lambda^2.c_2, \lambda^3c_3,\dots, \lambda^{2n+2}.c_{2n+2}) \hspace{2cm} \lambda \in K^{\times}.$$ 
Given the data $(c_2,c_3,\dots,c_{2n+2})$, we define the minimal integral model of $H$ as follows: for each point $v \in |C|$, we can choose an integer $n_v$ which is the smallest integer satisfying that: the tuple $(\varpi_v^{2n_v}c_2, \varpi_v^{3n_v}c_3, \cdots, \varpi_v^{(2n+2)n_v}c_{2n+2})$ has coordinates in $\mathcal{O}_{K_v}$. Given $(n_v)_{v \in |C|}$, we define the invertible sheaf $\mathcal{F}_H \subset K$ whose sections over a Zariski open $U \subset C$ are given by $$\mathcal{F}_H(U) = K \cap \big(\prod_{v \in U}\varpi_v^{-n_v}\mathcal{O}_{K_v} \big).$$
Then it is easy to deduce that $c_i \in H^0(C, \mathcal{F}_H^{\otimes i})$ for all $2 \leq i \leq 2n+2$. Furthermore, the stratum $(\mathcal{F}_H, \underline{c})$ is minimal in the sense that there is no proper subsheaf $\mathcal{M}$ of $\mathcal{F}_H$ such that $c_i \in H^0(C, \mathcal{M}^{\otimes i})$ for all $i$. Conversely, given a minimal stratum $(\mathcal{F}, \underline{c})$ satisfying that $\Delta(\underline{c}) \neq 0$, we consider a subscheme of the weighted projective space $\mathbb{P}_{(1,n+1,1)}( \mathcal{F} \oplus \mathcal{F}^{n+1} \oplus \mathcal{O}_C)$ that is defined by 
$$Y^2 = X^{2n+2} +c_2Z^2X^{2n} +\dots + c_{2n+1}Z^{2n+1}X + c_{2n+2}Z^{2n+2}.$$ This is a flat family of curves $\mathcal{H} \rightarrow C,$ and the generic fiber $\mathcal{H}_K$ is a hyperelliptic curve over $K(C)$ with a marked rational non-Weierstrass point $[1:1:0]$ at infinity. Furthermore, the associated minimal data of $H$ is precisely $(\mathcal{F}, \underline{c})$. Hence we have just shown the surjectivity of the following map $\phi_{\mathcal{F}}$ with a given line bundle $\mathcal{F}$ over $C$:
$$\phi_{\mathcal{F}}: \{ \text{minimal tuples} \hspace{0.2cm}(\mathcal{F}, \underline{c}) \} \rightarrow \{ \text{Hyperelliptic curves $(H, \infty)$ such that $\mathcal{F}_H \cong \mathcal{F}$} \}.$$
Moreover, the sizes of fibers of $\phi_{\mathcal{F}}$ can be calculated as follows:
\begin{proposition} \label{minimal data}
Given a line bundle $\mathcal{F}$ over $C$, the map $\phi_{\mathcal{F}}$ defined as above is surjective, and the preimage of $(H,\infty)$ is of size $\frac{|\mathbb{F}_q^{\times}|}{|\text{Aut}(H,\infty)|}$, here $\text{Aut}(H,\infty)$ denotes the subset of all elements in $\text{Aut}(H)$ which preserve the marked point $\infty$.
\end{proposition}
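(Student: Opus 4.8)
\emph{Proof plan.} The surjectivity of $\phi_{\mathcal F}$ is precisely the construction of the flat family $\mathcal H\to C$ out of a minimal stratum carried out just before the statement, so the whole content is the computation of the fibres, and the plan is to realise each fibre as a quotient for the scaling action of $K^{\times}$. First I would put all minimal strata on an equal footing: let $\mathcal S$ be the set of pairs $(\mathcal M,\underline c)$ with $\mathcal M$ an invertible subsheaf of the constant sheaf $K$ on $C$, $c_i\in H^{0}(C,\mathcal M^{\otimes i})$ for $2\le i\le 2n+2$, $\Delta(\underline c)\neq0$, and $(\mathcal M,\underline c)$ minimal, and let $K^{\times}$ act by $\lambda\cdot(\mathcal M,\underline c)=(\lambda\mathcal M,(\lambda^{i}c_i)_i)$. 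I would check this action is well defined: multiplication by $\lambda$ is an order isomorphism of the lattice of invertible subsheaves of $K$ carrying $H^{0}(C,\mathcal M^{\otimes i})$ onto $H^{0}(C,(\lambda\mathcal M)^{\otimes i})$, so it preserves minimality (and visibly $\Delta\neq0$).

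Next I would record that the map sending $(\mathcal M,\underline c)$ to the curve it defines is constant on $K^{\times}$-orbits (the substitution $x\mapsto\lambda x,\ y\mapsto\lambda^{n+1}y$ identifies the two curves) and, using the uniqueness of the Weierstrass tuple up to the identification $(c_i)\equiv(\lambda^{i}c_i)$ recalled above together with the fact that a tuple over $K$ determines its minimal subsheaf, induces a bijection between $\mathcal S/K^{\times}$ and isomorphism classes of even hyperelliptic curves with a marked non-Weierstrass point. Now fix $H$, whose associated line bundle is abstractly isomorphic to $\mathcal F$ (as it is for every curve in the target of $\phi_{\mathcal F}$), let $(\mathcal F_H,\underline c^{0})\in\mathcal S$ be its minimal stratum, and note that its $K^{\times}$-orbit is the unique one mapping to $(H,\infty)$. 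Since $C$ is complete and geometrically connected, $H^{0}(C,\mathcal O_C^{\times})=\mathbb F_q^{\times}$, so $\lambda\mathcal M=\mathcal M$ forces $\lambda\in\mathbb F_q^{\times}$, and hence the stabiliser of $(\mathcal F_H,\underline c^{0})$ in $K^{\times}$ is the finite group $A:=\{\lambda\in\mathbb F_q^{\times}:\lambda^{i}c^{0}_i=c^{0}_i\text{ for all }i\}$.

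The step I expect to be the main obstacle is the identification $A\cong\text{Aut}(H,\infty)$. Here the plan is: an automorphism of $H$ fixing $\infty$ acts on the unique $g^{1}_{2}$ of $H$ (genus $\ge2$), hence descends to an automorphism of $\mathbb P^{1}$ fixing the image of $\infty$, i.e.\ a substitution $x\mapsto ax+b$ with $a,b\in K$; comparing the coefficient of $x^{2n+1}$ in the Weierstrass equation (which vanishes because the equation is even, and $2n+2\neq0$ in $k$ because $p>4n$) forces $b=0$; invariance of the equation under $x\mapsto ax$ then forces $a^{i}=1$ whenever $c^{0}_i\neq0$; and of the two lifts $(x,y)\mapsto(ax,\pm a^{n+1}y)$ to $H$ only the one with $+$ fixes $\infty$, the one with $-$ being its composite with the hyperelliptic involution, which sends $\infty$ to $\infty'$. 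Conversely each such $a$ gives an automorphism of $(H,\infty)$ acting on $\underline c^{0}$ through $\lambda=a$. A little care is needed about $K$-rationality (roots of unity in $K$ lie in $\mathbb F_q^{\times}$ since $C$ is geometrically connected), but the argument is otherwise elementary.

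Finally, for the count, I would choose $\mu_0\in K^{\times}$ with $\mu_0\mathcal F_H=\mathcal F$ inside $K$ (such a $\mu_0$ exists by tensoring an abstract isomorphism $\mathcal F_H\cong\mathcal F$ with $K$). The fibre $\phi_{\mathcal F}^{-1}(H,\infty)$ is exactly the set of members of the $K^{\times}$-orbit of $(\mathcal F_H,\underline c^{0})$ whose sheaf component equals $\mathcal F$; but $\lambda\cdot(\mathcal F_H,\underline c^{0})$ has sheaf component $\lambda\mathcal F_H$, which equals $\mathcal F$ iff $\lambda\in\mathbb F_q^{\times}\mu_0$, and two such $\lambda,\lambda'$ produce the same stratum iff $\lambda/\lambda'\in A$. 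Therefore $|\phi_{\mathcal F}^{-1}(H,\infty)|=|\mathbb F_q^{\times}\mu_0|/|A|=|\mathbb F_q^{\times}|/|A|=|\mathbb F_q^{\times}|/|\text{Aut}(H,\infty)|$, which is the assertion.
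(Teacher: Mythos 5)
Your argument is correct and is essentially the paper's proof: both realise the fibre as an orbit of the scaling action (with the paper working directly with the $\mathbb{F}_q^{\times}$-action on tuples $\underline c$ for the fixed $\mathcal F$, and you with the $K^{\times}$-action on strata restricted to those with sheaf component $\mathcal F$), and both conclude by orbit–stabiliser. The only difference is that the paper simply asserts that the stabiliser is $\text{Aut}(H,\infty)$, whereas you supply the verification (descent to $\mathbb{P}^{1}$, vanishing of the $x^{2n+1}$ coefficient forcing $b=0$, the choice of lift fixing $\infty$, and rationality of the roots of unity), which is a worthwhile addition but not a different route.
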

\begin{proof}
Suppose that $(H,\infty)$ is a hyperelliptic curve with the associated minimal data $(\mathcal{F}, \underline{c}).$ Since we fix the line bundle $\mathcal{F}$, the tuple of sections $\underline{c}$ is well-defined up to the following identification: 
$$\underline{c} \equiv \lambda. \underline{c} = (\lambda^2c_2, \dots, \lambda^{2n+1}c_{2n+1}, \lambda^{2n+2}c_{2n+2}), \hspace{1cm} \lambda \in \mathbb{F}_q^{\times}.$$
In other words, there is a transitive action of $\mathbb{F}_q$ on the fiber $\phi_{\mathcal{F}}^{-1}(H)$. Furthermore, the stabilizer of any element in $\phi_{\mathcal{F}}^{-1}(H)$ is precisely $\text{Aut}(H,\infty)$. Hence, the size of $\phi_{\mathcal{F}}^{-1}(H,\infty)$ is $\frac{|\mathbb{F}_q^{\times}|}{|\text{Aut}(H,\infty)|}$. 
\end{proof}

\begin{definition}{\textbf{(Height of hyperelliptic curve)}} The height of the hyperelliptic curve $(H,\infty)$ is defined to be the degree of the associated line bundle $\mathcal{F}_H$. 
\end{definition}
The transversality of even hyperelliptic curves also can be defined by using the above integral minimal model as follows:

\begin{definition} \label{Definition of transversal}
An even hyperelliptic curve $H$ with an associated minimal data $(\mathcal{F}_H, \underline{c})$ is called to be transversal if the discriminant $\Delta(\underline{c}) \in H^0(C, \mathcal{F}_H^{\otimes (2n+1)(2n+2)})$ is square-free. 
\end{definition}
\subsection{2-Selmer group and the first cohomology group}
For each hyperelliptic curve $H$ over $K(C)$, we associate to it a flat family of curves $$h:\mathcal{H} \rightarrow C$$ with reduced geometric fibers. Moreover, any irreducible component of a reducible curve in that family is geometrically integral. Hence, by \cite{Ray90} Theorem 8.2.2, the relative Jacobian functor, which classifies invertible sheaves of degree 0, is represented by a group scheme locally of finite type over $C$, denoted by $J_{\mathcal{H}}.$ Furthermore, the Jacobian $J_H$ of $H$ can be identified as the generic fiber of $J_{\mathcal{H}}.$ Recall that we demote by $Sel_2(H)$ the $2-Selmer$ group of the Jacobian $J_{H}$. We now are able to relate $|Sel_2(H)|$ and the number of $J_{\mathcal{H}}-$torsors over $C$ as follows:
\begin{proposition}\label{selmer group and the first coho}
 Let $H$ be a hyperelliptic curve over the function field $K(C).$ If $H$ is transversal (see Definition \ref{Definition of transversal}), then we have equality:
$$|Sel_2(H)| = |H^1(C, J_{\mathcal{H}}[2]|.$$
In general, we have an inequality:
$$|Sel_2(H)| \leq |H^0(K, J_H[2])| . |H^1(C, J_{\mathcal{H}}[2])|.$$
\end{proposition}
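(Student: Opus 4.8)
The plan is to realise both sides as \'etale cohomology groups on $C$ and to compare them through the identity component of the N\'eron model. Let $\mathcal{A}$ denote the locally finite type N\'eron model of $J_H$ over $C$. Since the Weierstrass total space $\mathcal{H}$ is normal, with geometrically reduced and geometrically connected fibres, and carries a section, the theory of the relative Picard functor (\cite{Ray90}) identifies $J_{\mathcal{H}}=\mathrm{Pic}^0_{\mathcal{H}/C}$ with the identity component $\mathcal{A}^0$ of $\mathcal{A}$. Because $p>4n$, no residue characteristic of $C$ equals $2$, so multiplication by $2$ is \'etale on $\mathcal{A}$ and $\mathcal{A}[2]$, $J_{\mathcal{H}}[2]=\mathcal{A}^0[2]$ are quasi-finite \'etale $C$-group schemes. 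The first step is to prove
$$|Sel_2(H)|=|H^1(C,\mathcal{A}[2])|.$$
One feeds $\mathcal{A}[2]$ into the localisation sequence in \'etale cohomology for the closed points of $C$: at every $v$ the local contribution is $H^1_v(C,\mathcal{A}[2])=\mathrm{coker}\bigl(\mathcal{A}[2](\mathcal{O}_{K_v})\to J_H[2](K_v)\bigr)=0$, since $2$-torsion sections extend by the N\'eron mapping property; hence $H^1(C,\mathcal{A}[2])\hookrightarrow H^1(K,J_H[2])$ with image $\{\xi:\ \xi_v\in\mathrm{im}\,H^1(\mathcal{O}_{K_v},\mathcal{A}[2])\text{ for all }v\}$. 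It then remains to invoke the standard local comparison $\mathrm{im}\bigl(H^1(\mathcal{O}_{K_v},\mathcal{A}[2])\to H^1(K_v,J_H[2])\bigr)=\mathrm{im}\bigl(J_H(K_v)/2\bigr)$, valid since $v\nmid 2$ and $H^1(\mathcal{O}_{K_v},\mathcal{A}^0)=0$ by Lang's theorem, which identifies the image with $Sel_2(H)$.

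For the transversal case I would show $\mathcal{A}^0=\mathcal{A}$. If $H$ is transversal the discriminant vanishes to order exactly $1$ at each bad place $v$; a local computation on the Weierstrass equation shows that $\mathcal{H}_v$ is then irreducible with a single node and that $\mathcal{H}$ is regular at that node (local equation $ab=\varpi_v$). Thus $\mathcal{H}$ is a regular, hence minimal, model whose bad fibres are reduced and irreducible, so the component group $\Phi_v=\mathcal{A}_v/\mathcal{A}^0_v$ vanishes for every $v$. Therefore $J_{\mathcal{H}}=\mathcal{A}^0=\mathcal{A}$, and the first step already yields $|Sel_2(H)|=|H^1(C,J_{\mathcal{H}}[2])|$.

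For the general inequality I would use the short exact sequence of \'etale sheaves on $C$
$$0\longrightarrow J_{\mathcal{H}}[2]\longrightarrow\mathcal{A}[2]\longrightarrow\Phi[2]\longrightarrow0,\qquad \Phi=\bigoplus_{v\in|C|}(i_v)_*\Phi_v,$$
with $i_v\colon\mathrm{Spec}\,k_v\hookrightarrow C$ the closed point and $\Phi_v=\mathcal{A}_v/\mathcal{A}^0_v$; this is exact because $2$ is invertible on $C$ and multiplication by $2$ is an epimorphism on the smooth, connected-fibred $\mathcal{A}^0$ (its fibres are divisible, any unipotent part being killed by the unit $2$). In the long exact sequence one has $H^0(C,\mathcal{A}[2])=J_H[2](K)=H^0(K,J_H[2])$ by the global N\'eron property, $H^0(C,\Phi[2])=\bigoplus_v\Phi_v[2](k_v)$, and $|H^1(C,\Phi[2])|=\prod_v|H^1(k_v,\Phi_v[2])|=\prod_v|\Phi_v[2](k_v)|=|H^0(C,\Phi[2])|$, the middle equality because $|H^1(k_v,M)|=|M^{\mathrm{Gal}(\overline{k_v}/k_v)}|$ for a finite module $M$ over the procyclic group. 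Writing $\rho_0,\rho_1$ for the maps $H^i(C,\mathcal{A}[2])\to H^i(C,\Phi[2])$ ($i=0,1$), counting orders in the long exact sequence gives
$$|Sel_2(H)|=|H^1(C,\mathcal{A}[2])|=|H^1(C,J_{\mathcal{H}}[2])|\cdot\frac{|\mathrm{im}\,\rho_0|\cdot|\mathrm{im}\,\rho_1|}{|H^0(C,\Phi[2])|},$$
and the bounds $|\mathrm{im}\,\rho_0|\le|H^0(C,\mathcal{A}[2])|=|H^0(K,J_H[2])|$ and $|\mathrm{im}\,\rho_1|\le|H^1(C,\Phi[2])|=|H^0(C,\Phi[2])|$ yield $|Sel_2(H)|\le|H^0(K,J_H[2])|\cdot|H^1(C,J_{\mathcal{H}}[2])|$.

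The order count in the third step is routine; the substantive work is concentrated in the first step. The two points I expect to be delicate are: (a) the local comparison $\mathrm{im}\,H^1(\mathcal{O}_{K_v},\mathcal{A}[2])=\mathrm{im}(J_H(K_v)/2)$ at the bad places, where $\mathcal{A}[2]$ is merely quasi-finite over the trait and one must analyse its torsors there, crucially using $v\nmid 2$ and Lang's theorem; and (b) identifying $\mathrm{Pic}^0_{\mathcal{H}/C}$ with $\mathcal{A}^0$ without assuming $\mathcal{H}$ regular, through normality of the Weierstrass total space and Raynaud's representability result. The transversal reduction $\Phi_v=0$ also rests on the explicit description of $\mathcal{H}$ at a place where the discriminant has a simple zero.
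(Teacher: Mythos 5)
Your architecture is close to the paper's in spirit (compare the Selmer group with cohomology of the N\'eron model, then pass to $J_{\mathcal H}$ through the identity component and count orders in exact sequences), and your transversal argument is sound: a simple zero of the discriminant forces $\mathcal H$ to be regular with irreducible nodal fibres, so the component groups vanish, $J_{\mathcal H}$ is the whole N\'eron model, and the local conditions match. But the general case rests on two identifications that fail without further hypotheses. First, $|Sel_2(H)|=|H^1(C,\mathcal A[2])|$ with the \emph{full} N\'eron model is false at places $v$ where $2$ divides the order of the component group $\Phi_v$. Your local comparison needs the class of $P\in J_H(K_v)$ to come from a genuine $\mathcal A[2]$-torsor over $\mathcal O_{K_v}$, i.e.\ $[2]^{-1}(P)\subset\mathcal A$ must be faithfully flat over the trait; this fails when $P$ reduces into a component of $\mathcal A_v$ not in the image of multiplication by $2$ — equivalently, the Kummer sequence $0\to\mathcal A[2]\to\mathcal A\to\mathcal A\to0$ is not exact over $\mathcal O_{K_v}$ because $[2]$ need not be surjective on $\Phi_v$. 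Lang's theorem controls $H^1(\mathcal O_{K_v},\mathcal A^0)$ but not this obstruction. The correct unconditional statement is precisely the inequality the paper imports from its reference: $|Sel_2(H)|\le|H^0(K,J_H[2])|\cdot|H^1(C,\mathcal J^0_H[2])|/|H^0(C,\mathcal J^0_H[2])|$, phrased with the identity component and carrying the extra $H^0$ factors.

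Second, normality of $\mathcal H$ does not identify $J_{\mathcal H}=\mathrm{Pic}^0_{\mathcal H/C}$ with $\mathcal A^0$: Raynaud's comparison requires $\mathcal H$ \emph{regular}, and the coefficient-minimal Weierstrass model can have non-rational surface singularities, in which case $\mathrm{Pic}^0$ of a bad fibre is a generalized Jacobian with unipotent parts that need not agree with the special fibre of $\mathcal A^0$ computed from the regular minimal model. The remarks following the proposition in the paper make explicit that this identification is special to regular or semi-stable models. The paper circumvents it by constructing only a morphism $j\colon J_{\mathcal H}[2]\to\mathcal J^0_H[2]$, which is an isomorphism away from finitely many points, and comparing the ratios $|H^1|/|H^0|$ through the skyscraper kernel and cokernel of $j$. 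Your exact sequence relating $\mathcal A^0[2]$, $\mathcal A[2]$ and $\Phi[2]$, and the order count that extracts the factor $|H^0(K,J_H[2])|$, are correct as far as they go, but they compare the wrong pair of sheaves: you would still need a second comparison between $\mathcal A^0[2]$ and $J_{\mathcal H}[2]$ of the kind the paper carries out.
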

\begin{proof}
For the transversal case, see [\cite{DVT17}, Proposition 2.20]. Now we consider the general case. Let $\mathcal{J}_{H}$ over $C$ be the N\'{e}ron model of $J_{H}$, and $\mathcal{J}^0_H$ be its identity connected component, i.e., over each point $v \in |C|,$ $\mathcal{J}^0_{H,v}$ is the identity component of $\mathcal{J}_{H,v}.$ Follow [\cite{aaron}, Proposition 3.24] (this result is for elliptic curves case, but its proof can be extended for the case of Jacobians of hyperelliptic curves), we are able to prove that
\begin{equation} 
|Sel_2(H)| \leq |H^0(K, J_H[2])| . \frac{|H^1(C, \mathcal{J}^0_{H}[2])|}{H^0(C, \mathcal{J}^0_H[2])}. \label{1}
\end{equation} All we need to do now is to compare $|H^1(C, \mathcal{J}^0_{H}[2])|$ and $|H^1(C, J_{\mathcal{H}}[2])|.$ By using the N\'eron mapping property, we first obtain a canonical morphism $j: J_{\mathcal{H}} \rightarrow \mathcal{J}_H$
of group schemes over $C$. Since $J_{\mathcal{H},v}$ is connected for any $v \in |C|,$ the map $j$ factors through $\mathcal{J}^0_H.$ We also denote the induced morphism from $J_{\mathcal{H}}[2]$ to $\mathcal{J}^0_H[2]$ by $j$. In the abelian category of sheaves of finite abelian groups over $C$, we have the following exact sequences:
\begin{align}
	0 \rightarrow \text{Ker}(j) \rightarrow J_{\mathcal{H}}[2] \rightarrow \text{Im}(j) \rightarrow 0 \label{exact sequence 1}\\
	0 \rightarrow \text{Im}(j) \rightarrow \mathcal{J}^0_H[2] \rightarrow \text{Coker}(j) \rightarrow 0. \label{exact sequence 2} 
 \end{align} 
 Since there are only finitely many points $v$ of $C$ such that $\mathcal{H}_v$ is not smooth (the number of those points is bounded by the degree of $\Delta(H)$), we imply that $J_{\mathcal{H}}$ and $\mathcal{J}^0_H$ are agreed over $C$ except at a finite number of points. This deduce that $\text{Ker}(j)$ and $\text{Coker}(j)$ are skyscraper sheaves supported at finitely many points of $C$. Thus, we have the following qualities:
 $$|H^0(C,\text{Ker}(j))| = |H^1(C, \text{Ker}(j))|; \,\, |H^0(C, \text{Coker}(j))| = |H^1(C, \text{Coker}(j))|; $$
 $$\,\, H^2(C,\text{Ker}(j)) = H^2(C, \text{Coker}(j)) = 0.$$
 By considering two long exact sequences of cohomology groups associated to (\ref{exact sequence 1}) and (\ref{exact sequence 2}), we find
 \begin{align*}
  |H^0(\text{Ker}(j))|. |H^0(\text{Im}(j))| . |H^1(J_{\mathcal{H}}[2])| &= |H^0(J_{\mathcal{H}[2]})|. |H^1(\text{Ker}(j))|. |H^1(\text{Im}(j))|  \\
 \Rightarrow  |H^0(\text{Im}(j))| . |H^1(J_{\mathcal{H}}[2])| &= |H^0(J_{\mathcal{H}[2]})| . |H^1(\text{Im}(j))|,
 \end{align*}
and 
\begin{align*}
 |H^0(\text{Im}(j))|. |H^0(\text{Coker}(j))| . |H^1(\mathcal{J}^0_{H}[2])| &\leq  |H^0(\mathcal{J}^0_H[2])|. |H^1(\text{Im}(j))|. |H^1(\text{Coker}(j))|  \\
\Rightarrow  |H^0(\text{Im}(j))|. |H^1(\mathcal{J}^0_{H}[2])| &\leq  |H^0(\mathcal{J}^0_H[2])|. |H^1(\text{Im}(j))|. 
\end{align*}
We imply that
\begin{equation} \label{4}
	\frac{|H^1(\mathcal{J}^0_H[2])|}{|H^0(\mathcal{J}^0_H[2])|} \leq \frac{|H^1(J_{\mathcal{H}}[2])|}{|H^0(J_{\mathcal{H}}[2])|}
\end{equation}
Combine (\ref{1}) and (\ref{4}), we have completed the proof. 
\end{proof}
\begin{remark}
	\begin{itemize}
		\item[i)]  For any curves $H$ whose minimal integral model $\mathcal{H} \rightarrow C$ is regular with geometrically irreducible fibers, by [\cite{Ray90}, Proposition 9.5.1], $J_\mathcal{H}$ coincides with the N\'eron model of $J_H.$ Then we can also show that $|Sel_2(H)| = |H^1(C, J_{\mathcal{H}}[2]|.$ Although the family of curves with regular Weierstrass model is quite big (any transversal curves belong to this family), it is hard to estimate the size of this family in contrast to the transversal case (see Section \ref{section 3}).
		\item [ii)] If $\mathcal{H} \rightarrow C$ is semi-stable (see Section \ref{semi-stable section}), we deduce that $J_\mathcal{H} = \mathcal{J}^0_H$ since $\mathcal{H}$ has rational singularities. By looking at the proof of the above proposition, we realize that the chance to obtain an exact average size in the semi-stable case is higher than the general case. We hope to consider that problem in the future.
		\end{itemize}
\end{remark}
From Proposition \ref{selmer group and the first coho}, we see that in general case, $|Sel_2(H)|$ is bounded by $H^1(C, J_{\mathcal{H}}[2])$ if $H^0(K, J_H[2]) = 0.$ The next proposition will help us to ignore the case $H^0(K, J_H[2]) \neq 0$ if the size of our base field $\mathbb{F}_q$ is large enough.
\begin{proposition}
	Assume that $q>4^{1/2(n+1)(2n+1)}$ if $n \geq 7,$ or $q>4^{4.(2n+1)}$ if $n<7.$ Then the contribution of the case $H^0(K, J_H[2]) \neq 0$ to the average is zero.
\end{proposition}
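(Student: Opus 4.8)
The plan is to bound the contribution of these curves from above by a product of two quantities: the number of even hyperelliptic curves of bounded height with $H^0(K,J_H[2])\neq 0$, which I will show is a much thinner family than the full one, and an \emph{a priori} upper bound for $|Sel_2(H)|$ coming from Proposition \ref{selmer group and the first coho}, which is a fixed power of $2$ raised to a multiple of $ht(H)$. The hypothesis on $q$ is exactly what is needed to make this product negligible next to the denominator of the average. The first task is to make the condition $H^0(K,J_H[2])\neq 0$ explicit. Write $f_H=x^{2n+2}+c_2x^{2n}+\cdots+c_{2n+2}\in K[x]$; it is separable since $\Delta(\underline c)\neq 0$. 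By the standard description of the $2$-torsion of an even hyperelliptic Jacobian (see \cite{DVT17}, and \cite{SW13} over $\mathbb Q$), $J_H[2](\overline K)$ is, as a $G_K$-module, the $\mathbb F_2$-space of even-cardinality subsets of the $2n+2$ roots of $f_H$ modulo complementation. Hence a nonzero $G_K$-fixed vector is of one of two kinds: either (i) both complementary subsets are $G_K$-stable, so $f_H=g\,h$ with $g,h\in K[x]$ monic of even degrees $2m$ and $2n+2-2m$, $1\le m\le n$; or (ii) $G_K$ swaps the two subsets through a quotient $G_K\twoheadrightarrow\mathrm{Gal}(L/K)$ with $[L:K]=2$, so $f_H=N_{L/K}(h)$ for a monic $h$ of degree $n+1$ over $L$. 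I would treat these two families separately.

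For the \emph{a priori} Selmer bound, Proposition \ref{selmer group and the first coho} gives $|Sel_2(H)|\le|H^0(K,J_H[2])|\cdot|H^1(C,J_{\mathcal H}[2])|\le 2^{2n}\,|H^1(C,J_{\mathcal H}[2])|$, and $H^1(C,J_{\mathcal H}[2])$ is an $\mathbb F_2$-vector space because $J_{\mathcal H}[2]$ is $2$-torsion. I would bound its dimension by Grothendieck--Ogg--Shafarevich: $J_{\mathcal H}[2]$ is lisse of rank $2n$ away from the set $S$ of points of bad reduction of $\mathcal H\to C$, with $|S|\le\deg\Delta_H=(2n+1)(2n+2)\,ht(H)$, and the local monodromy is tame (as $p\neq 2$), so the Euler--Poincaré formula on $C\setminus S$ gives $\dim_{\mathbb F_2}H^1(C,J_{\mathcal H}[2])\le 2n(|S|+2g_C-1)$, hence
\[
|Sel_2(H)|\ \le\ 2^{\,c_1\,ht(H)+O(1)},\qquad c_1:=2n(2n+1)(2n+2),
\]
with the $O(1)$ depending only on $C$.

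Next I would count the bad curves. Fix $\mathcal F$ with $\deg\mathcal F=e$. By Proposition \ref{minimal data} the weighted number of $H$ with $\mathcal F_H\cong\mathcal F$ equals $\tfrac1{q-1}$ times the number of minimal tuples $\underline c$, which is at most $q^{(2n+1)(n+2)e+O(1)}$ since $\sum_{i=2}^{2n+2}h^0(\mathcal F^{\otimes i})=(2n+1)(n+2)e+O(1)$, with $(2n+1)(n+2)=2n^2+5n+2$; summing over $e\le d$ (and using that most tuples are minimal) shows the denominator of the average is $\asymp q^{(2n+1)(n+2)d}$. The key point for family (i) is an integrality statement: if all $c_i\in H^0(\mathcal F^{\otimes i})$ then, applying the Newton polygon at each place $v$ to the \emph{monic} polynomial $f_H$, every root $\alpha$ satisfies $v(\alpha)\ge -n_v$ (where $\mathcal F$ has divisor $\sum n_v\,v$); therefore every elementary symmetric function of a subset of the roots again lies in the corresponding $H^0(\mathcal F^{\otimes j})$, so the coefficients of both $g$ and $h$ lie in powers of $\mathcal F$. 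Counting monic $g$ of degree $2m$ and monic $h$ of degree $2n+2-2m$ with coefficients in powers of $\mathcal F$, and imposing the single linear condition that the coefficient of $x^{2n+1}$ in $gh$ vanish, produces at most $q^{\,(2(m^2+(n+1-m)^2)+n)\,e+O(1)}$ pairs; the exponent is maximised at $m\in\{1,n\}$, giving $q^{(2n^2+n+2)e+O(1)}$. Family (ii) is handled the same way over the quadratic cover $C'\to C$ cut out by $L/K$: for a cover of genus $g'$ the relevant $h$'s come from sections of $\pi^*\mathcal F^{\otimes\bullet}$ on $C'$, and since there are at most $q^{O(g')}$ such covers the sum over $C'$ converges and contributes an $e$-exponent of only $(n+1)(n+2)-1=n^2+3n+1<2n^2+n+2$. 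Summing over $\mathcal F$, the weighted count of bad curves of height $\le d$ is $\le q^{c_2 d+O(1)}$ with $c_2:=2n^2+n+2$, so the codimension gap is $(2n+1)(n+2)-c_2=4n$.

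Combining the two estimates, the contribution of bad curves to the numerator of the average is $\le 2^{c_1 d}\,q^{c_2 d+O(1)}$, so the corresponding part of the average is
\[
\le\ \frac{2^{c_1 d}\,q^{c_2 d+O(1)}}{q^{(2n+1)(n+2)d+O(1)}}\ =\ \bigl(2^{c_1}q^{-4n}\bigr)^{d}\,q^{O(1)},
\]
which tends to $0$ as $d\to\infty$ provided $q^{4n}>2^{c_1}$, i.e. $q>2^{\,c_1/4n}=4^{\,\frac12(n+1)(2n+1)}$; this inequality is implied by the stated hypothesis (it \emph{is} the hypothesis for $n\ge7$, and it is weaker than $q>4^{4(2n+1)}$ when $n<7$). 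I expect the main obstacle to be the Selmer estimate: since $c_1/4n$ turns out to equal the advertised exponent $\tfrac12(n+1)(2n+1)$ exactly, there is essentially no slack, so the Ogg--Shafarevich bound on $\dim_{\mathbb F_2}H^1(C,J_{\mathcal H}[2])$ must be pushed to be as sharp as the crude worst-case ramification allows; a secondary technical point is controlling, and summing over, the quadratic covers $C'\to C$ arising in family (ii).
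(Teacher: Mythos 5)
Your overall strategy is the one the paper uses: the same dichotomy between a genuine even factorization $f=gh$ over $K$ and a norm factorization $f=h\bar h$ over a quadratic extension, the same integrality argument forcing the coefficients of $g$ and $h$ into $H^0(C,\mathcal F_H^{\otimes i})$, the same single linear condition from the vanishing of the $x^{2n+1}$ coefficient, and the same a priori bound $|Sel_2(H)|\le 2^{2n(2n+1)(2n+2)d+O(1)}$ (the paper imports this from [DVT17, Lemma 2.12]; your Ogg--Shafarevich derivation gives the identical constant). Your count for the split family, exponent $2m^2+2(n+1-m)^2+n$ maximized at $m\in\{1,n\}$ to give $2n^2+n+2$ and hence a codimension gap of $4n$, agrees exactly with the paper's $\sum_i q^{(4i^2-4i(n+1)+(n+2)(2n+1))d}$.

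The gap is in your treatment of the quadratic-extension family. You assert that summing over quadratic covers $C'\to C$ ``converges'' because there are at most $q^{O(g')}$ covers, and conclude an $e$-exponent of $(n+1)(n+2)-1$. But the covers that can occur are exactly those of the form $K(\sqrt u)$ with $u$ a section of $\mathcal F^{\otimes 2i}$ for some $i\le n+1$, so their genera range up to roughly $(n+1)e$ and their number grows like $q^{2g'+O(1)}$; convergence of the sum therefore requires comparing this growth against the decay $h^0(C',\pi^*\mathcal F^{\otimes i})=2ie+1-g'$ of the per-cover section count, and for $g'>e$ the small-$i$ bundles become special so Riemann--Roch must be replaced by Clifford's bound. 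None of this is carried out, and it matters: the paper instead bounds each coefficient $a_i+tb_i$ by $|H^0(\mathcal F^{\otimes i})|\cdot|H^0(\mathcal F^{\otimes 2i})|$ (absorbing the choice of extension into the count of $(tb_i)^2$), obtaining the larger exponent $\tfrac32(n+1)(n+2)-1=\tfrac12(3n^2+9n+4)$, which for $n<7$ \emph{exceeds} your split-family exponent $2n^2+n+2$ and yields the smaller gap $\tfrac12(n^2+n)$ --- this is precisely why the theorem imposes the stronger hypothesis $q>4^{4(2n+1)}$ in that range. Your version, as written, claims that the $n\ge 7$ hypothesis suffices for all $n$, which is a strictly stronger statement than the proposition and is not justified by what you have written. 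Either carry out the cover-by-cover estimate honestly (with the $q^{2g'}$ count of covers, the $-(n+1)g'$ Riemann--Roch savings, and Clifford's theorem in the special range), or fall back to the paper's coefficientwise bound and accept the weaker gap $\tfrac12(n^2+n)$ for $n<7$.
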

\begin{proof}
Given a hyperelliptic curve $H: y^2 = f(x)$ with the associated minimal data $(\mathcal{F}_{H}, \underline{c})$, assume that $J_H[2](K) \neq 0.$ Note that we have a correspondence between elements in $J_H[2](K)$ and factorizations of $f(x)$ into $g(x).h(x),$ where $g(x)$ and $h(x)$ are in $K[x]$ of even degree, or $g(x) = \overline{h}(x)$ in some quadratic extensions of $K$. Thus, we have two cases as follows:
\begin{itemize}
\item[Case 1:] There exists an even factorization $g(x).h(x)$ in $K[x]$ of $f(x).$ Recall that $f(x) = x^{2n+2} + c_2x^{2n} + \dots + c_{2n+1}x + c_{2n+2},$ where $c_i \in H^0(C, \mathcal{F}_H^{\otimes i})$ for all $i)$ More precisely, for any $v \in |C|,$ we have that $\text{val}_v(c_i) + i.deg_v(\mathcal{F}_H) \geq 0$ for all $i)$ If we write 
\begin{align*}
	g(x) = x^{n_1} + a_1x^{n_1-1} + \dots + a_{n_1}, \\
	h(x)= x^{n_2} + b_1x^{n_2-1} + \dots + b_{n_2},
\end{align*}
then it can be shown that $a_i \in H^0(C, \mathcal{F}_H^{\otimes i})$ for all $1 \leq i \leq n_1,$ and $b_j \in H^0(C, \mathcal{F}_H^{\otimes j})$ for all $1 \leq j \leq n_2.$ In fact, it is equivalent to show that for any $v \in |C|,$ $\text{val}_v(a_i) + i.deg_v(\mathcal{F}_H) \geq 0$
 and $\text{val}_v(b_j) + j.deg_v(\mathcal{F}_H) \geq 0$ for all $i, j.$ By multiplying $f(x)$ by $\varpi_v^{(2n+2).deg_v(\mathcal{F}_H)}$ and changing $x$ to $x. \varpi_v^{deg_v(\mathcal{F}_H)},$ we may assume that $c_i \in \mathcal{O}_{K_v}$ for all $2 \leq i \leq 2n+2,$ and we need to show that $g(x)$ and $ h(x)$ are in  $\mathcal{O}_{K_v}[x].$ But this is coming from the fact that $\mathcal{O}_{K_v}$ is a unique factorization domain. Additionally, it is easy to see that $a_1=-b_1$ since the coefficient of $x^{2n+1}$ in $f(x)$ is zero. To sum up, by using the Riemann-Roch theorem, we may bound above the number of hyperelliptic curves in case 1 with fixed minimal line bundle $\mathcal{F}$ (assume that $\text{deg}(\mathcal{F})$ is large enough) by  
\begin{equation} \label{5}
	\sum_{i=1}^{n}q^{(4i^2-4i(n+1)+(n+2)(2n+1))d +(2n+1)(1-g)}.
\end{equation}
\item [Case 2:] $f(x) = h(x). \overline{h}(x)$ in a quadratic extension $K'$ of $K.$ We write $$h(x) = x^{n+1} + (a_1 + t.b_1)x^n + \dots + (a_{n+1} + t.b_{n+1}),$$where $a_i, b_i \in K$ for all $i,$ and $t \in K' \setminus K$ such that $t^2 \in K.$ For each $v \in |C|$, we have a unique extension to $K_v'$ of the valuation $\text{val}_v$ on $K_v.$ Precisely, for each element $a +t.b \in K_v',$ we set 
$$\text{val}_{K_v'}(a+t.b) := 1/2 . \text{val}_v(a^2 - t^2.b^2).$$  
Now we use a similar argument as in Step 1 to get 
\begin{align*}
	\text{val}_{K_v'}(a_i+t.b_i) + i. deg_v(\mathcal{F}_H) \geq 0, \,\, \text{for all $1\leq i \leq n+1,$}
	\end{align*} 
	 \[\Rightarrow  \left\{ \begin{array}{ll}
	\text{val}_v(a_i) = \text{val}_{K_v'}(a_i+t.b_i + a_i - t.b_i) \geq  -i. deg_v(\mathcal{F}_H) \,\, \text{for all $1\leq i \leq n+1,$} \\
	\text{val}_v(t^2.b_i^2) =2 \text{val}_{K_v'}\big((a_i+t.b_i) - (a_i - t.b_i)\big) \geq  -2i. deg_v(\mathcal{F}_H) \,\, \text{for all $1\leq i \leq n+1.$}
	\end{array} \right. \] 	
	 This implies that for fixed line bundle $\mathcal{F}_H,$ the number of choices of $a_i+t.b_i$ is bounded above by $2|H^0(C, \mathcal{F}_H^{\otimes i})| . |H^0(C, \mathcal{F}_H^{\otimes 2i})|$ for any $1 \leq i \leq n+1.$ Moreover, we easily deduce from the vanishing of $x^{2n+1}$ in $f(x)$ that $a_1= 0.$ Hence, if the degree of $\mathcal{F}_H$ is large enough, the number of hyperelliptic curves, in this case, is bounded above by
	  \begin{equation} 
	  \label{6} 
	  2^{n+1}q^{3\sum_{i=1}^{n+1}i -1} = 2^{n+1} q^{\frac{1}{2}(3n^2+9n+4)}.
	  \end{equation}
\end{itemize}
From (\ref{5}) and (\ref{6}), we conclude that the number of hyperelliptic curves satisfying that their associated line bundle is $\mathcal{F}$ of large degree $d$ and their Jacobians possess nontrivial 2-torsion points over $K$, is bounded above by
\[ \left\{ 
\begin{array}{ll}
m. q^{(n+2)(2n+1)d - 4nd} \,\,\,\,\,\,\,\,\,\,\,\,\,\,\,\,\,\,\,\text{if $n \geq 7$} \\
m.q^{(n+2)(2n+1)d - 1/2(n^2+n)d} \,\, \text{otherwise,}
\end{array}
\right.\]
where $m$ is a number that is independent to $d.$ Now, we use the same argument as in [\cite{DVT17}, Lemma 2.12] to obtain an upper bound for the average size of 2-Selmer groups in case $J[2](K) \neq 0$:
\[ \left\{ 
\begin{array}{ll}
\frac{m'.q^{(n+2)(2n+1)d - 4nd}. 4^{n.(2n+1)(2n+2)d}}{q^{(n+2)(2n+1)d}} = m'. \frac{4^{n(2n+1)(2n+2)d}}{q^{4nd}} \,\,\,\,\,\,\,\,\,\,\,\,\,\,\,\,\,\,\text{if $n \geq 7$} \\
\frac{m'.q^{(n+2)(2n+1)d - 1/2(n^2+n)d}. 4^{n.(2n+1)(2n+2)d}}{q^{(n+2)(2n+1)d}} = m'. \frac{4^{n(2n+1)(2n+2)d}}{q^{1/2(n^2+n)d}} \,\,\, \text{if $n<7$.}
\end{array}
\right.\]So when $d \rightarrow \infty,$ the above numbers tend to zero under assumptions that $q>4^{1/2(n+1)(2n+1)}$ if $n \geq 7,$ and $q>4^{4.(2n+1)}$ if $n<7.$
\end{proof}
\section{Vinberg-Levi's representation and a connection to hyperelliptic curves} \label{section 2}
\subsection{Vinberg-Levi's representation}
\label{Vinberg representation}
 Let $(U,Q)$ be the split quadratic space over $k$ of dimension $2n+2$ and discriminant $1$. Then for any linear operator $T: U \rightarrow U,$ we defined its adjoint $T^*$ by the following equation:
$$\langle Tv,w \rangle_Q = \langle v, T^*w\rangle_Q, \hspace{1cm} \forall v,w \in U.$$where $\langle v,w \rangle_Q=Q(v+w) - Q(v) -Q(w)$ indicates the bilinear form associated with $Q.$ The Vinberg-Levi's representation we are going to study is the conjugate action of 
$$G:= \text{PSO}(U) = \{ g \in \text{GL}(U) | gg^*=I, \text{det}(g)=1 \}/\mu_2$$on
$$V=\{ T: U \rightarrow U | T=T^*, trace(T)=0 \}.$$
The above representation is of type $A_{2n+1}$ in Vinberg-Levi's theory, and we know that the GIT quotient $V//G$ is isomorphic to $S = \text{Spec}(k[c_2,c_3,\dots, c_{2n+2}]),$ where for each $T \in V$, $c_i(T)$ is the coefficients of the characteristic polynomial of $T$:
$$f_T(x) = x^{2n+2} + c_2(T) x^{2n} + \cdots + c_{2n+1}(T)x +c_{2n+2}(T).$$
We indicate the projection map by $\pi : V \rightarrow S$. In the next subsection, we are going to introduce some sections of this map which is essential to define regular locus. 
\subsection{Kostant sections and regular locus}
We are interested in elements in $V$ whose stabilizers in $G$ are of finite order, and let denote 
$$V^{\text{reg}}(\overline{k}) = \{ T \in V(\overline{k}) | |\text{Stab}_{G(\overline{k})}(T)| \text{ is finite} \}.$$
We have the following criteria for regularity:
\begin{proposition}
An element $T$ in $V(\overline{k})$ is regular if and only if the characteristic polynomial of $T$ is equal to its minimal polynomial.
\end{proposition}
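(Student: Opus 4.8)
The plan is to work with a linear operator $T$ on the $\overline{k}$-vector space $U$ and relate the finiteness of $\mathrm{Stab}_{G(\overline{k})}(T)$ to the structure of the commutant of $T$ inside $\mathrm{GL}(U)$. First I would observe that since $p$ is good for $G$ (we assume $p>4n$), the Lie-algebra infinitesimal stabilizer $\mathfrak{g}_T = \{X \in \mathfrak{g} : [X,T]=0\}$ controls the dimension of $\mathrm{Stab}_G(T)$: the stabilizer is finite if and only if $\mathfrak{g}_T = 0$, i.e. $\dim \mathrm{Stab}_G(T) = 0$. Here $\mathfrak{g} = \mathfrak{so}(U) = \{X : X = -X^*\}$, the $(-1)$-eigenspace of the involution $\theta(X) = -X^*$ on $\mathfrak{gl}(U)$, while $V$ is the $(+1)$-eigenspace. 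So the task reduces to showing: $\{X \in \mathfrak{gl}(U) : X=-X^*,\ [X,T]=0\} = 0$ if and only if the characteristic polynomial of $T$ equals its minimal polynomial.

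Next I would analyze the full centralizer $Z(T) = \{Y \in \mathrm{End}(U) : [Y,T]=0\}$. Since $T=T^*$ is self-adjoint, the adjoint map $Y \mapsto Y^*$ preserves $Z(T)$ (because $[Y^*,T] = [Y^*,T^*] = -[Y,T]^* = 0$), so $Z(T)$ decomposes into its $\pm 1$-eigenspaces under $Y \mapsto Y^*$, and $\mathfrak{g}_T$ is exactly the $(-1)$-eigenspace $Z(T)^- = Z(T) \cap \mathfrak{g}$. The key point is then a dimension count: $Z(T)$ always contains $k[T]$, the subalgebra generated by $T$, which has dimension equal to $\deg(\text{min poly of }T)$; and $\dim Z(T) = \deg(\text{char poly}) = 2n+2$ precisely when char poly $=$ min poly (the operator is "regular" / cyclic in the classical sense). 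In that cyclic case $Z(T) = k[T]$ consists of symmetric operators, so $Z(T)^- = 0$ and the stabilizer is finite. Conversely, if char poly $\neq$ min poly, I would exhibit a nonzero element of $Z(T)^-$: decompose $U$ into generalized eigenspaces, and on a generalized eigenspace where $T$ fails to be cyclic one can build a nonzero skew-adjoint operator commuting with $T$ by pairing up Jordan-type blocks, using that $Q$ is nondegenerate to arrange the adjoint to land in $\mathfrak{g}$.

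The main obstacle I expect is the converse direction — producing the nonzero skew-adjoint commuting operator — because it requires understanding how the quadratic form $Q$ interacts with the generalized eigenspace decomposition of a self-adjoint (not necessarily semisimple) $T$. One has to treat separately eigenvalues $\lambda$ with $\lambda \neq -\lambda$ (where the $\lambda$- and $(-\lambda)$-generalized eigenspaces are paired by $Q$ and mutually dual) versus the eigenvalue $\lambda = 0$ (where $Q$ restricts nondegenerately to a single generalized eigenspace), and check in each case that non-cyclicity yields room to write down a skew element of the commutant. I would also need to confirm that passing between "$\mathrm{Stab}_G$ finite" and "$\mathfrak{g}_T=0$" is legitimate in characteristic $p$; this is where the low-height/good-prime hypothesis $p>4n$ enters, via the smoothness results for this $\theta$-representation cited from \cite{Lev08}, \cite{Lev09}, \cite{BH04}. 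Once these pieces are in place the equivalence follows by combining the dimension count with the eigenspace analysis.
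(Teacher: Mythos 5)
The paper actually states this proposition without proof (it is imported from the Vinberg--Levy theory, cf.\ \cite{Wan1}, \cite{Lev08}), so your proposal has to stand on its own. Its skeleton is the standard one and the forward direction is sound: if the characteristic polynomial equals the minimal polynomial, the full commutant of $T$ in $\operatorname{End}(U)$ is $\overline{k}[T]$, which consists of self-adjoint operators because $T=T^{*}$, so $\mathfrak{g}_T=Z(T)\cap\mathfrak{so}(U)=0$ and the stabilizer is \'etale, hence finite. However, there is a concrete error in the structural analysis you propose for the converse: you claim that for an eigenvalue $\lambda$ with $\lambda\neq-\lambda$ the generalized eigenspaces $U_\lambda$ and $U_{-\lambda}$ are paired by $Q$. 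That is the behaviour of a \emph{skew}-adjoint operator, i.e.\ of an element of $\mathfrak{so}(U)$; here $T$ is \emph{self}-adjoint, and the computation $\lambda\langle v,w\rangle=\langle Tv,w\rangle=\langle v,Tw\rangle=\mu\langle v,w\rangle$, pushed up the generalized-eigenspace filtration by induction, shows instead that $U_\lambda\perp U_\mu$ whenever $\lambda\neq\mu$, so $Q$ restricts nondegenerately to each single $U_\lambda$. Your case division ($\lambda\neq-\lambda$ versus $\lambda=0$) is therefore based on the wrong orthogonal geometry, and the block-pairing construction built on it would not go through as written.

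Two further points. First, you can bypass the Jordan-block casework entirely: writing $\mathfrak{gl}(U)=\mathrm{Sym}\oplus\mathfrak{so}$ (orthogonal for the trace form, each summand nondegenerate), the identity $\operatorname{tr}([T,Y]S)=-\operatorname{tr}(Y[T,S])$ shows $[T,\mathfrak{so}]$ is the annihilator of $\mathrm{Sym}_T$ in $\mathrm{Sym}$, whence $\dim\mathrm{Sym}_T-\dim\mathfrak{so}_T=\dim\mathrm{Sym}-\dim\mathfrak{so}=2n+2$; combined with $\dim Z_{\mathfrak{gl}}(T)=\dim\mathrm{Sym}_T+\dim\mathfrak{so}_T\geq 2n+2$, with equality exactly when $T$ is cyclic, this gives $2\dim\mathfrak{so}_T=\dim Z_{\mathfrak{gl}}(T)-(2n+2)$, so $\mathfrak{so}_T=0$ if and only if the characteristic and minimal polynomials agree. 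Second, the step from $\mathfrak{so}_T\neq0$ to $|\mathrm{Stab}_{G(\overline{k})}(T)|=\infty$ is a genuine gap in characteristic $p$, not a formality: the general inequality $\dim\mathrm{Stab}_{\mathrm{red}}(T)\leq\dim\mathfrak{g}_T$ points the wrong way for this implication, so you must either invoke separability of the orbit map in good characteristic from \cite{Lev08}, or produce an actual positive-dimensional subgroup of the stabilizer (e.g.\ by exponentiating a nilpotent element of $\mathfrak{so}_T$, which is legitimate here since $p>4n\geq 2n+2$). You flag this issue but do not close it.
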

Since equality of characteristic and minimal polynomials is independent to field extensions, the above Proposition helps us to define an open subscheme $V^{\text{reg}}$ of $V$ which is defined over $k$. The stabilizer of $T \in V^{\text{reg}}$ could be computed as follows:
\begin{proposition}
\label{stabilizer}
Let $F$ be an extension of $k$, and $T \in V^{\text{reg}}(F)$ be a regular element. Set $L = F[x]/(f_T(x)),$ where $f_T(x)$ is the characteristic polynomial of $T$. Then we have the following isomorphism:
$$\text{Stab}_G(T) \cong (\text{Res}_{L/F}\mu_2)_{N=1}/\mu_2.$$
\end{proposition}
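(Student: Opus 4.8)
The plan is to compute $\mathrm{Stab}_G(T)$ by first lifting to the orthogonal group and then descending through the isogeny $\mathrm{SO}(U)\to G=\mathrm{PSO}(U)$. Since $T$ is regular, its characteristic polynomial $f_T$ equals its minimal polynomial, so $L=F[x]/(f_T(x))$ acts on $U$ making $U$ a free $L$-module of rank one. First I would identify the centralizer of $T$ in $\mathrm{GL}(U)$ with $L^\times$ via this module structure. The condition that $g$ lies in the orthogonal group, $gg^*=I$, translates under the adjoint into a condition on $L$: because $T=T^*$, the assignment $x\mapsto x$ already shows $L$ is stable under the adjoint involution $*$, and since $f_T$ has coefficients in $F$ with the pattern dictating $T$ traceless, one checks the involution on $L$ induced by $*$ is in fact trivial on the subring generated by $T$ — more precisely, the adjoint anti-involution restricted to the commutative ring $F[T]=L$ is the identity, because $\langle Tv,w\rangle=\langle v,Tw\rangle$. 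Hence the orthogonality condition $gg^*=I$ becomes $g\cdot g=1$ in $L^\times$, i.e. $g\in (\mathrm{Res}_{L/F}\mu_2)$.

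The second step is to impose $\det(g)=1$ and then quotient by the central $\mu_2$. Here I would use that for $g\in L^\times$ acting on $U\cong L$ as multiplication, $\det_F(g)=N_{L/F}(g)$. For $g$ with $g^2=1$ one has $N_{L/F}(g)^2=N_{L/F}(g^2)=1$, and a short argument (e.g. comparing with the norm of $-1$, or noting the determinant of an involution equals $\pm1$ according to the dimension of its $-1$-eigenspace) pins down when $N_{L/F}(g)=1$; generically this is automatic, but in the scheme-theoretic statement the correct object is exactly the subgroup scheme $(\mathrm{Res}_{L/F}\mu_2)_{N=1}$ cut out by requiring the norm to be $1$. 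Finally, the center $\mu_2\subset\mathrm{SO}(U)$ is $\{\pm I\}$, which corresponds to $\{\pm1\}\subset L^\times$, so passing to $G=\mathrm{PSO}(U)$ quotients out this $\mu_2$, yielding
$$\mathrm{Stab}_G(T)\;\cong\;(\mathrm{Res}_{L/F}\mu_2)_{N=1}/\mu_2.$$
I would phrase all of this functorially (over an arbitrary $F$-algebra $R$, replacing $U$ by $U\otimes_F R$ and $L$ by $L\otimes_F R$) so that the isomorphism is one of group schemes over $F$, not merely on $F$-points.

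The main subtlety — and the step I expect to require the most care — is the interaction between the three conditions defining $G$: being in the orthogonal group, having determinant one, and the quotient by $\mu_2$. Concretely, one must check that the resulting group scheme is really $(\mathrm{Res}_{L/F}\mu_2)_{N=1}/\mu_2$ and not, say, all of $\mathrm{Res}_{L/F}\mu_2$ modulo something, or a group differing by the component structure; this is where the hypothesis $p>4n$ (so $p\neq 2$, and $\mu_2$ is étale) and the split/discriminant-$1$ normalization of $(U,Q)$ enter, guaranteeing that $\mathrm{SO}(U)\to\mathrm{PSO}(U)$ is a central isogeny with kernel exactly $\mu_2$ and that the norm map behaves well. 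A clean way to organize the determinant bookkeeping is to note that, since $L$ has even $F$-dimension $2n+2$, $\det_F(-1)=(-1)^{2n+2}=1$, so $-1\in(\mathrm{Res}_{L/F}\mu_2)_{N=1}$ and the quotient by the central $\mu_2$ makes sense inside the norm-one subgroup; one then verifies the map $g\mapsto g$ induces the claimed isomorphism by checking bijectivity on $R$-points for all $F$-algebras $R$, the injectivity being precisely the statement that the only $R$-points of $L^\times$ acting orthogonally with determinant one and lying over the identity of $\mathrm{PSO}$ are $\pm1$.
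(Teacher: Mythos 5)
Your argument is correct and is essentially the standard proof of this fact: identify the centralizer of the regular element $T$ in $\mathrm{GL}(U)$ with $L^\times$ (using that $U$ is free of rank one over $L=F[T]$), observe that the adjoint involution is trivial on $F[T]$ so that $gg^*=I$ becomes $g^2=1$ and $\det$ becomes $N_{L/F}$, and then pass to the quotient by the central $\mu_2$, which acts trivially by conjugation. The paper itself gives no proof and simply cites Wang, where this same computation is carried out, so there is nothing to contrast.
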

\begin{proof}
(c.f. \cite{Wan1})
\end{proof}
Now, given an invariant $\underline{c} = (c_2,c_3, \dots, c_{2n+2}) \in S,$ we define an element in $V$ whose invariant is $\underline{c}$ as follows:
$$\kappa_1(\underline{c}) = A + B,$$where $A, B \in \text{Mat}_{2n+2}$; $A$ is the lower diagonal nilpotent matrix, i.e., the entries of $A$: $a_{ij}=0$ except the case that $i=j+1$; $B$ is defined as follows:
$$B = \left(\begin{array}{cc} 0_{n+1} & C \\ 0_{n+1} & 0_{n+1} \end{array} \right),$$where $0_{n+1}$ is the zero matrix of size $n+1$, and $C$ is the following triple anti-diagonal matrix:
$$C = \left(\begin{array}{cccccc} 0 & 0 & \cdots & 0 & a_{2n+1}/2 & -a_{2n+2} \\
0 & 0 & \cdots & a_{2n-1}/2 & -a_{2n} & a_{2n+1}/2 \\
0 & 0 & \cdots & -a_{2n-2} & a_{2n-1}/2 & 0 \\
\vdots &\vdots&\vdots&\vdots&\vdots&\vdots \\
a_3/2 & -a_4 & \cdots & 0 & 0& 0 \\
-a_2  & a_3/2 & \cdots & 0 & 0& 0
 \end{array} \right).$$
It is easy to check that $\kappa_1(\underline{c}) \in V$ is regular. By varying $\underline{c} \in S,$ we get a section of the invariant map $\pi: V \rightarrow S$ whose image is in the regular locus $V^{\text{reg}}$. Furthermore, the second section of $\pi$ could be defined as follows: $\kappa_2(\underline{c}) = J.\kappa_1(\underline{c}).J^*,$ where 
$$J = \left(\begin{array}{ccc}
&&1 \\
&I_{2n} & \\
1&&
\end{array}\right) \in \text{O}(U)\setminus \text{SO}(U).$$
We call two above sections $\kappa_1$ and $\kappa_2$ the Kostant sections. 
\begin{remark}
\begin{itemize}
\item[i)] By Proposition 1.51 in \cite{Wan1}, over $\overline{k}$, the action of $\text{PO}(U)$ on $V^{\text{reg}}_f:= V^{\text{reg}} \cap \pi^{-1}(f)$ is transitive for any $f \in S$. This implies that any elements of $V^{\text{reg}}_f$ are conjugate with at least one of $\kappa_1(f)$ and $\kappa_2(f)$ by an element of $G = \text{PSO}(U).$ In other words, $V^{\text{reg}}(\overline{k}) = G . \kappa_1(S) \cup G . \kappa_2(S).$

\item[ii)] In Vinberg-Levi's theory of $\theta-$groups, if we set $H= \text{SL}(U)$ and $\theta$ denotes the inverse transpose operator on $H$, then our group $G$ will be the fixed subgroup $H^{\theta=1}$ of $H$. Moreover, if we denote $Z(H)$ to be the center of $H$, and set $H_{ad}^{\theta}= \{ h \in H | \theta(h) \in Z(H).h \},$ then by [\cite{Lev08}, Theorem 0.14], $H_{ad}^{\theta}$ acts transitively on $V^{\text{reg}}_f$. It is also easy to check that $\text{SO}(U)$ is the identity connected component of $H_{ad}^{\theta}$ of order $2$. Hence, the action of $\text{SO}(U)$ on $V^{\text{reg}}_f$ has at most $2$ orbits as expected. 
\end{itemize}
\end{remark}
The finiteness of the stabilizer of regular elements implies that the action map
\begin{align*}
G \times_{S}V^{\text{reg}} & \longrightarrow  V^{\text{reg}}\times_{S} V^{\text{reg}} \\
(g,v)&\mapsto (g.v,v)
\end{align*}
is \'{e}tale, so the universal stabilizer $I$ of the action of $G$ on $V^{\text{reg}}$
$$I= (G\times_{S}V^{\text{reg}}) \times_{V^{\text{reg}}\times_{S} V^{\text{reg}}}V^{\text{reg}},$$where $V^{\text{reg}} \rightarrow V^{\text{reg}}\times V^{\text{reg}}$ is the diagonal map, is a quasi-finite \'{etale} group scheme over $V^{\text{reg}}$. One of the key results in this section is that we can descend the group scheme $I$ to a scheme over $S$. That is the content of the following proposition:

\begin{proposition} There exists a unique group scheme $I_{S}$ over $S$ equipped with an isomorphism $\pi^*I_{S} \rightarrow I$ over $V^{\text{reg}}$. This isomorphism is $G-$equivariant; thus, as a result, there is a $\mathbb{G}_m-$equivariant isomorphism of stacks $[BI_{S}]\cong [V^{\text{reg}}/G],$ where $BI_{S}$ is the relative classifying stack of $I_{S}$ over $S$.
\label{stabilizer and orbits}
\end{proposition}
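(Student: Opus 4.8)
The plan is to construct $I_S$ by hand and then descend $I$ along the invariant map. First I would record the two structural inputs. By Proposition \ref{stabilizer} the stabilizer of any regular $T$ is $(\mathrm{Res}_{L/F}\mu_2)_{N=1}/\mu_2$, which is \emph{commutative}, and this description depends only on the characteristic polynomial $f_T$, i.e.\ only on the image $\pi(T)\in S$. Second, $\pi\colon V^{\mathrm{reg}}\to S$ is smooth (its fibres $V^{\mathrm{reg}}_f$ all have dimension $\dim G$) and surjective (it carries the Kostant sections $\kappa_1,\kappa_2$), hence it is an fppf cover. So faithfully flat descent along $\pi$ reduces the construction of $I_S$ to producing a descent datum on the quasi-finite étale group scheme $I$ over $V^{\mathrm{reg}}$, together with an identification of the descent.

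To produce the descent datum, let $\mathcal A$ be the tautological finite flat $\mathcal O_S$-algebra $\mathcal O_S[x]/\bigl(x^{2n+2}+c_2x^{2n}+\dots+c_{2n+2}\bigr)$ and set $I_S:=(\mathrm{Res}_{\mathcal A/\mathcal O_S}\mu_2)_{N=1}/\mu_2$, a quasi-finite étale commutative group scheme over $S$. Applying the relative version of Proposition \ref{stabilizer} to the tautological regular section $T_{\mathrm{univ}}\in V^{\mathrm{reg}}(V^{\mathrm{reg}})$ — here the $\mathcal O_{V^{\mathrm{reg}}}[T_{\mathrm{univ}}]$-module $U$ is invertible, it carries the symmetric $\mathcal O_{V^{\mathrm{reg}}}[T_{\mathrm{univ}}]$-bilinear form induced by $Q$ since $T_{\mathrm{univ}}=T_{\mathrm{univ}}^{*}$, and so the orthogonal units commuting with $T_{\mathrm{univ}}$ are precisely the square roots of $1$ of norm $1$ — yields a \emph{canonical} isomorphism $\pi^*I_S\xrightarrow{\ \sim\ }I$. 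Canonicity is the crux: the only choice involved is an $\mathcal O_{V^{\mathrm{reg}}}[T_{\mathrm{univ}}]$-linear trivialization of $U$, and this acts trivially by conjugation because $\mu_2(\mathcal A)$ is central in $\mathcal A^{\times}$. The identical computation over $V^{\mathrm{reg}}\times_S V^{\mathrm{reg}}$ identifies \emph{both} pullbacks $\mathrm{pr}_1^*I$ and $\mathrm{pr}_2^*I$ canonically with the pullback of $I_S$ from $S$; composing these identifications gives the descent datum $\mathrm{pr}_1^*I\xrightarrow{\sim}\mathrm{pr}_2^*I$, and its cocycle condition on the triple fibre product is automatic since all the identifications factor through the pullback of $I_S$. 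Effectivity of fppf descent then produces a group scheme over $S$ with pullback $I$, which by construction is $I_S$; uniqueness is the faithfulness of pullback along the faithfully flat $\pi$. The isomorphism $\pi^*I_S\to I$ is $G$-equivariant because conjugation by $g\in G$ fixes characteristic polynomials and transports the module-with-form structure by an $\mathcal O$-algebra isomorphism, hence acts trivially on the commutative group $\mu_2(\mathcal A)$.

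For the stack statement I would use the section $\kappa_1\colon S\to V^{\mathrm{reg}}$, whose stabilizer is $\kappa_1^*I\cong\kappa_1^*\pi^*I_S=I_S$; thus $I_S$ embeds into $G\times_kS$ as $\mathrm{Stab}_G(\kappa_1)$. This produces a morphism $\alpha\colon BI_S\to[V^{\mathrm{reg}}/G]$ sending an $I_S$-torsor $\mathcal Q$ to the $G$-torsor $\mathcal Q\times^{I_S}G$ equipped with its induced $G$-equivariant map to $V^{\mathrm{reg}}$ extending $\kappa_1$, and a morphism $\beta\colon[V^{\mathrm{reg}}/G]\to BI_S$ sending $(\mathcal P,\phi)$ to the $I_S$-torsor $\mathcal P\times_{V^{\mathrm{reg}}}S$ (fibre product along $\phi$ and $\kappa_1$). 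One checks $\beta\circ\alpha\cong\mathrm{id}$ formally, and $\alpha\circ\beta\cong\mathrm{id}$ amounts to: every pair $(\mathcal P,\phi)$ is, fppf-locally on the base, isomorphic to the one attached to $\kappa_1$ — equivalently, the $G$-action on $V^{\mathrm{reg}}$ is fibrewise transitive over $S$ in the fppf topology, i.e.\ $V^{\mathrm{reg}}\times_S V^{\mathrm{reg}}$ is covered by the $G$-transporter. This is the step I expect to be the main obstacle: Vinberg–Levy theory (the remark following Proposition \ref{stabilizer}) gives transitivity only for the larger group $H^{\theta}_{ad}$, so the work is to compare the $G$- and $H^{\theta}_{ad}$-orbits — that is, to control the dichotomy between the two Kostant sections $\kappa_1$ and $\kappa_2$ — and upgrade the statement to fppf-local transitivity for $G$ itself. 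Finally, $\mathbb G_m$-equivariance is built in: the scaling action on $V$ and the weighted action on $S$ make $\pi$, the algebra $\mathcal A$ (with $x$ in weight $1$), hence $I_S$, equivariant; the explicit matrix formula for $\kappa_1$ shows it is $\mathbb G_m$-equivariant; and therefore so are the isomorphism $\pi^*I_S\cong I$ and the functors $\alpha,\beta$.
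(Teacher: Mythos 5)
The paper states this proposition without proof, so there is nothing to compare against; judging your argument on its own terms, the first half is sound and the second half has a gap that you yourself flag -- and that gap is not merely technical. Your construction of $I_S$ as $(\mathrm{Res}_{\mathcal A/\mathcal O_S}\mu_2)_{N=1}/\mu_2$ with $\mathcal A=\mathcal O_S[x]/(f_{\mathrm{univ}})$ is the right object, and the identification $\pi^*I_S\cong I$ is correct and canonical: for regular $T$ the centralizer in $\mathrm{GL}(U)$ is literally $F[T]^\times\cong L^\times$ via $p(x)\mapsto p(T)$, with no trivialization of $U$ needed, and self-adjointness of $T$ turns $gg^*=1$ into $g^2=1$ and $\det$ into the norm. (Your descent apparatus is heavier than necessary -- you can write the isomorphism $\pi^*I_S\to I$ directly and invoke faithful flatness of $\pi$ only for uniqueness -- but it is not wrong.) The $G$- and $\mathbb G_m$-equivariance arguments are fine.

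The unresolved step -- fppf-local transitivity of $G$ on the fibres of $\pi$ -- is where the proof genuinely breaks, and you should know that it cannot be repaired over all of $S$. Over a geometric point $f=\prod(x-\alpha_i)^{m_i}$ with \emph{every} $m_i$ even (the locus $S^{\square}$ of perfect squares, which is nonempty), the norm character on $\mu_2(L)=(\mathbb Z/2)^t$ is $(\epsilon_i)\mapsto\prod\epsilon_i^{m_i}\equiv 1$, so the stabilizer of a regular $T$ in $\mathrm{PO}(U)$ already lies in $\mathrm{PSO}(U)$; since $\mathrm{PO}(U)(\overline k)$ acts transitively with $[\mathrm{PO}:\mathrm{PSO}]=2$, the fibre $V^{\mathrm{reg}}_f(\overline k)$ splits into exactly \emph{two} $G(\overline k)$-orbits (represented by $\kappa_1(f)$ and $\kappa_2(f)=J\kappa_1(f)J^*$), and two geometric orbits can never become fppf-locally isomorphic. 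Consequently $[V^{\mathrm{reg}}_f/G]$ has two isomorphism classes of $\overline k$-points while $BI_f$ has one, your functor $\alpha$ built from $\kappa_1$ fails to be essentially surjective there, and no equivalence exists over $S^{\square}$. (This is consistent with the paper's own counting in Proposition \ref{regular locus in general case}, which uses $|V^{\mathrm{reg}}_{\underline c}(k_v)|\leq 2|G(k_v)|$ with the factor $2$ occurring exactly on the square locus.) The honest conclusion is that your strategy proves the proposition only after restricting to the open complement of $S^{\square}$, where some $m_i$ is odd, the norm character is surjective on the $\mathrm{PO}$-stabilizer, $G(\overline k)$ acts transitively, and \'etale-local transitivity follows from smoothness of the orbit maps. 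You should either add that restriction explicitly or replace the target of the transitivity claim by $\mathrm{PO}(U)$; as written, the final fppf-transitivity you defer is false, not just unproven.
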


\subsection{Connection to hyperelliptic curves}
For each  $\underline{c}=(c_2, c_3, \dots, c_{2n+2}) \in S(\overline{k})$, the associated polynomial
$$f_{\underline{c}}(x) = x^{2n+2} + c_2x^{2n} + \dots + c_{2n+1}x + c_{2n+2}$$defines an even hyperelliptic curve $y^2= f_{\underline{c}}(x)$ (we allow singular hyperelliptic curves).  Then by varying $\underline{c}$, we obtain a flat family of projective, connected, reduced curves. Moreover, any irreducible component of a reducible curve in that family is geometrically integral. Hence, by [\cite{Ray90}, Theorem 8.2.2], the relative Jacobian functor, which classifies invertible sheaves of degree 0, is represented by a group scheme locally of finite type over $S$, denoted by $J_S.$ Recall that we have the invariant map:
$$\pi: V^{\text{reg}} \rightarrow S,$$thus, the fiber product
$$J_{V^{\text{reg}}}:= J_S \times_S V^{\text{reg}}$$defines a group scheme over $V^{\text{reg}}$. The main result of this subsection is the following isomorphism:
\begin{proposition} \label{stabilizer and jacobian}
There exists a canonical isomorphism over $V^{\text{reg}}$ between the stabilizer scheme $I$ (defined before proposition \ref{stabilizer and orbits}) and $J_{V^{\text{reg}}}[2]$. 
\end{proposition}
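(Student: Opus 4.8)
The plan is to reduce the statement to the base $S$ and to realise both group schemes there as Weil restrictions of $\mu_{2}$. By Proposition~\ref{stabilizer and orbits} we have $I\cong\pi^{*}I_{S}$, and by construction $J_{V^{\text{reg}}}=\pi^{*}J_{S}$; since $\pi\colon V^{\text{reg}}\to S$ is faithfully flat (it is the restriction of the GIT quotient and admits the Kostant section $\kappa_{1}$), it is enough to produce a canonical isomorphism $I_{S}\xrightarrow{\ \sim\ }J_{S}[2]$ of group schemes over $S$. First I would make Proposition~\ref{stabilizer} relative: writing $f(x)=x^{2n+2}+c_{2}x^{2n}+\dots+c_{2n+2}$ for the universal characteristic polynomial and $L_{S}=\mathcal{O}_{S}[x]/(f)$ for the associated finite locally free $\mathcal{O}_{S}$-algebra of rank $2n+2$, one checks, using that the Kostant section consists of regular (hence cyclic, hence companion-type) operators, that along $\kappa_{1}$ the bundle $U\otimes\mathcal{O}_{S}$ is a free $L_{S}$-module of rank one on which the quadratic form becomes $\langle u,w\rangle=\lambda(uw)$ for a nowhere-degenerate $\mathcal{O}_{S}$-linear functional $\lambda$. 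Automorphisms of this datum are multiplications by $g\in L_{S}^{\times}$ with $g^{2}=1$ and $N_{L_{S}/S}(g)=1$, modulo the central $\mu_{2}$; transporting this along $G$-orbits and descending by Proposition~\ref{stabilizer and orbits} yields a canonical identification $I_{S}\cong(\text{Res}_{L_{S}/S}\mu_{2})_{N=1}/\mu_{2}$ (cf.\ \cite{Wan1}).

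Next I would build the comparison map directly over all of $S$. Let $p\colon\mathcal{C}\to S$ be the universal even hyperelliptic curve $y^{2}=f(x)$ inside $\mathbb{P}_{(1,n+1,1)}$, let $D_{\infty}=p^{-1}(\{Z=0\})$ be the degree-two divisor at infinity, and let $W=\text{Spec}(L_{S})\subset\mathcal{C}$ be the Weierstrass subscheme, cut out by $y=0$. A section of $\text{Res}_{L_{S}/S}\mu_{2}$ over an $S$-scheme is a clopen decomposition $W=W'\sqcup W''$, equivalently a factorisation $f=f'f''$, and the norm-one condition says precisely that $W'$ has even degree. To such a datum I would attach the invertible sheaf $\mathcal{L}$ obtained by gluing the fractional ideal $(f',y)$ over $\mathcal{C}\setminus D_{\infty}$ to $\mathcal{O}_{\mathcal{C}}\bigl(-\tfrac{1}{2}(\deg f')D_{\infty}\bigr)$ near $D_{\infty}$, the half-integer being legitimate exactly because $\deg f'$ is even. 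Then $\mathcal{L}$ has fibrewise degree $0$, multiplication by $f'$ provides a canonical isomorphism $\mathcal{L}^{\otimes2}\xrightarrow{\ \sim\ }\mathcal{O}_{\mathcal{C}}$, and hence $\mathcal{L}$ defines a point of $J_{S}[2]$; since symmetric difference of the subsets $W'$ corresponds to the tensor product of the sheaves, this is a homomorphism, and since $-1\in\mu_{2}(L_{S})$ gives $W'=W$ with $(f,y)=(y)$ principal, it descends to $\Phi\colon(\text{Res}_{L_{S}/S}\mu_{2})_{N=1}/\mu_{2}\longrightarrow J_{S}[2]$.

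It then remains to prove that $\Phi$ is an isomorphism. Both its source and target are quasi-finite, separated, \'etale group schemes over $S$: the source by \'etale descent along the fppf map $\pi$ of the quasi-finite \'etale scheme $I$, and the target because $J_{S}=\text{Pic}^{0}_{\mathcal{C}/S}$ is smooth and separated by the representability result already cited while the hypothesis $p>4n$ makes $2$ invertible, so $J_{S}[2]=\ker[2]$ is \'etale. Over the discriminant complement $S^{\circ}=\{\Delta\neq0\}$ the fibres of $\mathcal{C}$ are smooth, and there $\Phi$ unwinds to the classical description of $J[2]$ of a smooth even hyperelliptic curve as the classes $\bigl[\sum_{P\in W'}P-\tfrac{|W'|}{2}D_{\infty}\bigr]$ indexed by even subsets of the $2n+2$ Weierstrass points modulo complementation; so $\Phi$ is an isomorphism over $S^{\circ}$, which also recovers the transversal and, more generally, smooth-fibre case already in \cite{DVT17}. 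To extend this over all of $S$ I would use that a homomorphism of quasi-finite \'etale $S$-group schemes which is bijective on every geometric fibre is an isomorphism, and check this bijectivity for each geometric point $s$. Injectivity of $\Phi_{s}$ follows because two factorisations of $f_{s}$ with isomorphic associated sheaves give linearly equivalent divisors supported on Weierstrass points, which forces the two subsets to be equal or complementary. Surjectivity follows by comparing cardinalities: if $f_{s}$ has $r$ distinct roots, a short computation with $\mu_{2}$ of an artinian algebra shows $\bigl|(\text{Res}_{L_{s}/s}\mu_{2})_{N=1}/\mu_{2}\bigr|$ equals $2^{r-1}$ when all root multiplicities are even and $2^{r-2}$ otherwise, while writing the generalised Jacobian of the possibly singular curve $y^{2}=f_{s}(x)$ as an extension of the Jacobian of its normalisation by a torus times a vector group, and tallying the contributions of the nodes, tacnodes, cusps and higher singularities, produces the same two numbers for $|J(\mathcal{C}_{s})[2]|$.

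I expect the real work to be the last step, namely controlling $\Phi$ along the discriminant locus. Two points need care. First, we work over $V^{\text{reg}}$ --- equivalently over all of $S$ --- which retains the codimension-one locus $\{\Delta=0\}$ and removes only the codimension-two non-regular part, so the isomorphism obtained over $S^{\circ}$ must be propagated fibre by fibre as above rather than by a purity or normality argument. Second, one must verify that the fractional ideal $(f',y)$, once corrected at infinity, really is invertible of fibrewise degree $0$ on all of $\mathcal{C}$; this needs a local computation at the points of $W$ where $f'$ and $f''$ collide and is where the norm-one hypothesis is genuinely used. The smooth-fibre case, in which neither issue arises, is contained in \cite{DVT17}; the new content here is the integral model and its behaviour along the degenerate fibres.
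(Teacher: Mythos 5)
Your proposal is correct in its essentials but takes a genuinely different route from the paper. The paper argues pointwise: for each geometric point $T$ of $V^{\text{reg}}$ it uses the identification $\text{Stab}_G(T)\cong(\text{Res}_{L/\overline{k}}\mu_2)_{N=1}/\mu_2$, computes $J_T[2]$ of the possibly singular curve by peeling off one even-multiplicity factor at a time (each quotient $J_T\to J_{\overline{T}}$ having kernel $\mathbb{G}_m\times\mathbb{G}_a^{m_t/2-1}$, hence contributing one $\mathbb{Z}/2$), splits into the cases ``all multiplicities even'' and ``some multiplicity odd'', matches explicit generators $Q_i$ and $P_j-P_r$ to sign vectors, and checks that the result is independent of the choices made, hence Galois-invariant. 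You instead construct a single homomorphism $\Phi$ of quasi-finite \'etale group schemes over $S$ --- sending a norm-one clopen decomposition $W=W'\sqcup W''$ to the invertible sheaf glued from $(f',y)$ --- and then verify that it is an isomorphism fibre by fibre, handling the degenerate fibres by an injectivity argument together with a cardinality count ($2^{r-1}$ when all multiplicities are even, $2^{r-2}$ otherwise), which I have checked agrees on both sides and which encodes exactly the same normalization/toric-rank computation that the paper performs by successive reduction: a $\mathbb{G}_m$-factor for each two-branch (even-multiplicity) Weierstrass point and only $\mathbb{G}_a$'s at unibranch points. What your version buys is that the comparison map is manifestly a morphism of group schemes over the whole base, so no separate verification of Galois-invariance or of the compatibility of the fibrewise isomorphisms under specialization is required --- a point the paper's pointwise construction leaves implicit; what the paper's version buys is an explicit list of generators of $J_T[2]$, which is more concrete. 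The local verifications you flag do work out: for a clopen decomposition the factors $f'$ and $f''$ are coprime on each fibre, so at a Weierstrass point of multiplicity $m$ the ideal $(f',y)$ is either the unit ideal or $((x-\alpha)^m,y)=(y)$ (using $y^2=(x-\alpha)^m u$ with $u$ a unit), hence invertible; and the evenness of $\deg f'$ forced by the norm-one condition is precisely what makes the correction at infinity an integral multiple of $D_\infty$. The only places where your sketch would need to be fleshed out are the relative form of Proposition \ref{stabilizer} over all of $S$ (not just over a field) and the injectivity of $\Phi_s$ on singular fibres, but neither presents a genuine obstacle.
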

\begin{proof}
For each regular element $T \in V^{\text{reg}}(\overline{k})$, we will construct a Galois invariant isomorphism from $I_T(\overline{k})$ and $J_T[2](\overline{k})$. Set $f_T(x)$ to be its characteristic polynomial. Then the stabilizer of $T$ in $G=\text{PSO}(U)$ is isomorphic to
$$(\text{Res}_{L/\overline{k}}\mu_2)_{N=1}/\mu_2,$$where $L=\overline{k}[x]/(f_T(x))$, and $N$ is the norm map (c.f. \cite{Wan1}). On the other hand, if we factorize $f_T(x)$:
$$f_T(x)= \prod_{i=1}^t (x-\alpha_i)^{m_i},$$then the subgroup of 2-torsion points of the (generalized) Jacobian $J_{T}$ is 
$$J_T[2] = \big< P_i + P_j - \infty - \infty' | \sum_{i=1}^t m_iP_i - (n+1)(\infty + \infty') =0 \big>_{1\leq i,j \leq t}\,\,\,\,,$$where $P_i$ is the Weierstrass point corresponding to the root $\alpha_i$ of $f_T(x)$, $\infty$ is the marked rational non-Weierstrass point, and $\infty'$ is conjugate to $\infty$ via the hyperelliptic involution. We have two cases as follows:

\textbf{Case 1:} $m_i$ is even for all $1\leq i \leq t$. In this case, firstly, the associated finite group $J_T[2]$ of the reducible curve $y^2=f_T(x)$ could be described as follows: if $t=1$ then it is trivial. If $t>1$, we set $J_{\overline{T}}$ to be the Jacobian of the curve $y^2 = f_T(x)/(x-\alpha_t)^{m_t}$. There is a surjective map 
$$\psi : J_T \rightarrow J_{\overline{T}}$$whose kernel is isomorphic to $\mathbb{G}_m \times \mathbb{G}_a^{m_t/2-1}$. This induces an isomorphism 
$$J_T[2](\overline{k}) \cong J_{\overline{T}}[2](\overline{k}) \times \mathbb{Z}/2$$
We keep doing this kind of reduction until there is only one factor $(x-\alpha_1)^{m_1}$ left. For each $2\leq i \leq t$, we denote $Q_i \in J_T$ to be the 2-torsion point that corresponds to  $(x-\alpha_i)^{m_i}$ as in the above reduction. Then we define a morphism:
$$\phi_T: (\text{Res}_{L/\overline{k}}\mu_2)/\mu_2 \rightarrow J_T[2](\overline{k})$$that maps the representative $(1,\dots,1,-1,1,\dots,1),$ where $-1$ is in the $i^{th}$ position, to $Q_i$. It is easy to check that $\phi_T$ is an isomorphism and it does not depend on the way we choose $Q_i$. In other words, if we fix another factor $(x-\alpha_i)^{m_i}$ instead of $(x-\alpha_1)^{m_1}$ and define an isomorphism $$\phi_T':(\text{Res}_{L/\overline{k}}\mu_2)/\mu_2 \rightarrow J_T[2](\overline{k})$$that maps the representative $(1,\dots,1,-1,1,\dots,1),$ where $-1$ is in the $j^{th}$ position, for $1\leq j \neq i \leq t$, to $Q_j$, then two morphisms $\phi_T$ and $\phi_T'$ are identical. Thus, our map is Galois invariant.

\textbf{Case 2:} $m_i$ is not even for all $1\leq i \leq t$. Without loss of generality, we assume that for $1 \leq i \leq h <t$, $m_i$ is even, and the remaining $m_j$, for $h+1 \leq j \leq t$, is odd. In this case, the group $J_T[2](\overline{k})$ can be seen as follows:
$$J_T[2](\overline{k})=\big< Q_1,\dots, Q_h, P_j-P_r | \text{div}(y) = 0 \big>_{h+1 \leq j < r \leq t} \,\, ,$$where $Q_i$ is the 2-torsion point that corresponds to $(x-\alpha_i)^{m_i}$ for $1 \leq i \leq h,$ and $P_j$, for $h+1 \leq j \leq t$, is the Weierstrass point corresponding to the root $\alpha_j$. Similar to the previous case, we make some reductions on $f_T(x)$ (to obtain $Q_i$) until there is only one even factor left. We may assume that the remaining even factor is $(x-\alpha_1)^{m_1}$ and can check later that our defining isomorphism will not depend on this choice. If we denote $J_{\overline{T}}$ be the Jacobian of the curve $$H: \,\,\,y^2= (x-\alpha_1)^{m_1} . \prod_{j=h+1}^t (x-\alpha_j)^{m_j}=f_{\overline{T}}(x),$$then we first have the following isomorphism:
$$J_T[2](\overline{k}) \cong (\mathbb{Z}/2)^{h-1} \times J_{\overline{T}}[2](\overline{k}).$$
Additionally, $J_{\overline{T}}[2](\overline{k})$ can be described as follows: we consider the curve $\overline{H}: \,\, y^2= \frac{f_{\overline{T}}(x)}{(x-\alpha_1)^{m_1}} $and the normalization map
$$\phi: \overline{H} \rightarrow H.$$Let $P$ and $Q$ be two preimages of $(\alpha_1,0) \in H$. Then the generalized Jacobian of $H$ is the quotient of the set of zero degree divisors by the principal divisors which are of the form $\text{div}(h)$, where $h$ is a rational function satisfying $h(P)=h(Q)$. We conclude that $J_{\overline{T}}[2](\overline{k})$ is canonically isomorphic to:
$$\big< P_j-P_r \big>_{h+1 \leq j <r \leq t}.$$ Note that the condition $\text{div}(y)$ does not appear here since $y(P) \neq y(Q).$ Finally, we can define a morphism
$$\psi_T: J_T[2](\overline{k}) \rightarrow (\text{Res}_{L/\overline{k}}\mu_2)_{N=1}/\mu_2$$given by
\begin{align*}
      Q_i &\mapsto (1,\dots,1,-1,1,\dots,1) \,\,\text{for} \,\,2 \leq i \leq h, \text{where $-1$ is in the $i^{th}$ position, } \\
      P_j-P_r &\mapsto (1,\dots,1,-1,1\dots,1,-1,1,\dots,1) \,\, \text{for $h+1\leq j < r \leq t,$ where $-1$ is in } \\ & \hspace{9cm}\text{the $j^{th}$ and $r^{th}$ positions.} 
\end{align*}
 We can check that $\psi_T$ is an isomorphism, and it does not depend on the way we define it by fixing $\alpha_1$. As a result, $\psi_T$ is Galois invariant.  
\end{proof}
We consider the action of $G$ on $J_{V^{\text{reg}}}= J_S \times_S V^{\text{reg}}$ that is induced from the action of $G$ on $V^{\text{reg}}$. Then we can see that the isomorphism in the above Proposition between $I_{|V^{\text{reg}}}$ and $J_{V^{\text{reg}}}[2]$ is $G-$equivariant. Thus, it produces an isomorphism from $I_S$ to $J_S[2]$. Moreover, we have a natural action of $\mathbb{G}_m$ on $J_S[2]$ as follows: for each $g \in \mathbb{G}_m$, $\underline{c} \in S$, recall that the action of $\mathbb{G}_m$ on $S$ is given by:
$$g.\underline{c}= (g^2c_2,\dots,g^{2n+1}c_{2n+1}, g^{2n+2}c_{2n+2}).$$The canonical isomorphism between two hyperelliptic curves $H_{\underline{c}}$ and $H_{g.\underline{c}}$ induces a group scheme isomorphism between their Jacobians. Hence, we have an isomorphism (denoted by $g$)
$$g: \,\,\, J_{H_{\underline{c}}}[2] \rightarrow J_{H_{g.\underline{c}}}[2]$$of finite group schemes over $k$. By varying $\underline{c} \in S$, we have an action of $g$ on $J_S[2]$. It is easy to check that the isomorphism in Proposition \ref{stabilizer and jacobian} is $\mathbb{G}_m-$equivariant. Combine with Proposition \ref{stabilizer and orbits}; we have the following corollary.
\begin{corollary}\label{jacobian and orbits}
There is a $\mathbb{G}_m-$equivariant isomorphism between the classifying stack $BJ_S[2]$ and the quotient stack $[V^{\text{reg}}/G].$
\end{corollary}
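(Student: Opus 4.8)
The plan is to simply assemble the two isomorphisms of stacks that have already been established. By Proposition \ref{stabilizer and orbits}, there is a $\mathbb{G}_m$-equivariant isomorphism of stacks $[BI_S] \cong [V^{\text{reg}}/G]$ over $S$, where $I_S$ is the descended universal stabilizer group scheme. By Proposition \ref{stabilizer and jacobian}, the stabilizer scheme $I$ over $V^{\text{reg}}$ is canonically isomorphic to $J_{V^{\text{reg}}}[2] = J_S[2] \times_S V^{\text{reg}}$, and this isomorphism is $G$-equivariant (as noted in the paragraph following that proposition). Hence it descends to an isomorphism $I_S \xrightarrow{\sim} J_S[2]$ of group schemes over $S$. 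First I would record that this descended isomorphism is $\mathbb{G}_m$-equivariant: the $\mathbb{G}_m$-action on $J_S[2]$ is the one induced by the scaling action $g.\underline{c} = (g^2 c_2, \dots, g^{2n+2} c_{2n+2})$ on $S$ together with the canonical isomorphism $J_{H_{\underline{c}}}[2] \cong J_{H_{g.\underline{c}}}[2]$, and one checks this is compatible with the $\mathbb{G}_m$-action on $I_S$ coming from the $\mathbb{G}_m$-action on $V^{\text{reg}}$, because the construction of $\phi_T$, $\psi_T$ in the proof of Proposition \ref{stabilizer and jacobian} is canonical (built from Weierstrass points and the factorization of $f_T$) and therefore respects scaling.

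Next I would push this through the classifying-stack functor. An isomorphism of group schemes $I_S \xrightarrow{\sim} J_S[2]$ over $S$ induces an isomorphism of relative classifying stacks $BI_S \cong BJ_S[2]$ over $S$, and $\mathbb{G}_m$-equivariance of the group scheme isomorphism gives $\mathbb{G}_m$-equivariance of the induced isomorphism of stacks. Composing with the isomorphism $[BI_S] \cong [V^{\text{reg}}/G]$ from Proposition \ref{stabilizer and orbits} yields the desired $\mathbb{G}_m$-equivariant isomorphism $BJ_S[2] \cong [V^{\text{reg}}/G]$.

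The only genuine content to verify is the $\mathbb{G}_m$-equivariance of the identification $I_S \cong J_S[2]$, since everything else is formal manipulation of the two cited propositions. This is where I would spend the care: one must track how an element $g \in \mathbb{G}_m$ acts on a regular element $T \in V^{\text{reg}}$ (it rescales $T$ appropriately so that its characteristic polynomial becomes $f_{g.\underline{c}}$) and confirm that, under the explicit isomorphisms $\phi_T$ and $\psi_T$ of Proposition \ref{stabilizer and jacobian}, the induced map on stabilizers matches the map $g \colon J_{H_{\underline{c}}}[2] \to J_{H_{g.\underline{c}}}[2]$ coming from the canonical isomorphism of the two hyperelliptic curves. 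I expect this to be the main (though not difficult) obstacle: it is a compatibility check between two canonical constructions, and the argument is essentially that both are defined intrinsically in terms of the root data of $f_T$, hence are automatically $\mathbb{G}_m$-equivariant. Having dispatched this, the corollary follows immediately.
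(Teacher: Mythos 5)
Your proposal is correct and follows exactly the route the paper takes: the text preceding the corollary notes that the isomorphism of Proposition \ref{stabilizer and jacobian} is $G$-equivariant (hence descends to $I_S \cong J_S[2]$) and $\mathbb{G}_m$-equivariant, and then combines this with Proposition \ref{stabilizer and orbits}. Your identification of the $\mathbb{G}_m$-equivariance of $I_S \cong J_S[2]$ as the only non-formal step matches the paper, which likewise treats it as an easy check.
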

\subsection{Restatements of main theorems} \label{interpretation}
The previous subsection gives us another point of view of $H^1(C,J_{\mathcal{H}}[2])$ as the set of points over $C$ of a stack. Firstly, for each hyperelliptic curve $H$ over $K(C)$, we associate it with a minimal data $(\mathcal{F}, \underline{c})$, which is determined uniquely up to an action of $\mathbb{G}_m$ on $S$ (see Proposition \ref{minimal data}). Hence, we can view $H$ as an element of $[S/\mathbb{G}_m](C)$, i.e $H$ can be seen as a morphism $\alpha_H: C \rightarrow [S/\mathbb{G}_m].$ The following map:
$$\mathcal{A}= [S/\mathbb{G}_m](C) \rightarrow B\mathbb{G}_m(C),$$whose fiber over a line bundle $\mathcal{F}$ classifies hyperelliptic curves $H$ over $K(C)$ satisfying that $\mathcal{F}_H \cong \mathcal{F}.$ Now we set
$$\mathcal{M}=[BJ_S[2]/\mathbb{G}_m](C),$$and consider the base map
$$b: \mathcal{M}\rightarrow \mathcal{A}.$$By definition, for each $\alpha_H: C \rightarrow [S/\mathbb{G}_m]$, the fiber $\mathcal{M}_{\alpha_H}$ classifies $J_{\mathcal{H}}[2]-$torsors over $C$. In other words, we have 
$$|\mathcal{M}_{\alpha_H}|= | H^1(C, J_{\mathcal{H}}[2])|.$$We also have the following commutative diagram:
\[
\begin{tikzcd}
\mathcal{M} \arrow{dr}{\pi_{\mathcal{M}}} \arrow{rr}{b} && \mathcal{A} \arrow{dl}{\pi_{\mathcal{A}}}\\
 & Hom(C, B\mathbb{G}_m) 
\end{tikzcd}
\]
which implies that for any line bundle $\mathcal{F}$ over $C$,
$$
    |\mathcal{M}_{\mathcal{F}}(k)| = \sum_{H \in \mathcal{A}_{\mathcal{F}}(k)}|H^1(C, J_{\mathcal{H}}[2])| 
     \geq \sum_{H \in \mathcal{A}_{\mathcal{F}}(k)} \frac{|Sel_2(J_H)|}{|H^0(K, J_H[2])|}
$$The above inequality is due to Proposition \ref{selmer group and the first coho}. Moreover, if we denote $\mathcal{A}_{\mathcal{F}}^{\text{sf}}(k)$ to be the set of transversal hyperelliptic curves whose associated minimal line bundles are isomorphic to $\mathcal{F},$ and set $$\mathcal{M}_{\mathcal{F}}^{\text{sf}}(k)=b^{-1}(\mathcal{A}_{\mathcal{F}}^{\text{sf}}(k)),$$then the proposition \ref{selmer group and the first coho} also implies
$$ |\mathcal{M}_{\mathcal{F}}^{\text{sf}}(k)| = \sum_{H \in \mathcal{A}_{\mathcal{F}}^{\text{sf}}(k)}|H^1(C, J_{\mathcal{H}}[2])| 
     = \sum_{H \in \mathcal{A}_{\mathcal{F}}^{\text{sf}}(k)} |Sel_2(J_H)|
$$From the above argument, the main theorems \ref{main theorem 1} and \ref{main theorem 2} will be the corollaries of the following theorems:
\begin{theorem} \label{stack version of main theorem 1}
Assume that $q>4^{1/2(n+1)(2n+1)}$ if $n \geq 7,$ and $q>4^{4.(2n+1)}$ if $n<7.$ If we denote by $\mathcal{A}_{\mathcal{F}}^{\text{min}}(k)$ the subset of $\mathcal{A}_{\mathcal{F}}(k)$ which classifies minimal elements $(\mathcal{F}, \underline{c}),$ and set $\mathcal{M}_{\mathcal{F}}^{\text{min}}(k) = b^{-1}(\mathcal{A}_{\mathcal{F}}^{\text{min}}(k)). $ Then we have the following limits
$$ \limsup_{d \rightarrow \infty} \cfrac{\sum\limits_{\mathcal{F} \in \text{Bun}^{1,\leq d}(C)} |\mathcal{M}^{\text{min}}_{\mathcal{F}}(k)|}{\sum\limits_{\mathcal{F} \in \text{Bun}^{1,\leq d}(C)} |\mathcal{A}^{\text{min}}_{\mathcal{F}}(k)|} \leq 4. \prod_{v \in |C|}(1+|k_v|^{-n-1}) + 2 +f(q),
$$ where $f(q)$ is a rational function of $q$ satisfying that $\lim_{q \rightarrow \infty} f(q) =0.$ If we take the average over the family of semi-stable hyperelliptic curves, then
$$ \limsup_{d \rightarrow \infty} \cfrac{\sum\limits_{\mathcal{F} \in \text{Bun}^{1,\leq d}(C)} |\mathcal{M}^{\text{ss}}_{\mathcal{F}}(k)|}{\sum\limits_{\mathcal{F} \in \text{Bun}^{1,\leq d}(C)} |\mathcal{A}^{\text{ss}}_{\mathcal{F}}(k)|} \leq 6 +f(q),
$$
\end{theorem}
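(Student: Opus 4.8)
The plan is to convert both sides into counts over $G$-bundles and then feed these into the canonical reduction machinery of $\S\ref{canonical reduction theory}$. By Corollary \ref{jacobian and orbits} the $\mathbb{G}_m$-equivariant identification $BJ_S[2]\cong[V^{\text{reg}}/G]$ yields $\mathcal{M}=[V^{\text{reg}}/(G\times\mathbb{G}_m)](C)$, so a $k$-point of $\mathcal{M}_{\mathcal{F}}$ is the datum of a $G$-bundle $\mathcal{E}$ over $C$ twisted by the $\mathbb{G}_m$-bundle attached to $\mathcal{F}$ (concretely, the $\text{GSO}(2n+2)$-bundles whose similitude line bundle is built from $\mathcal{F}$) together with a global section of the associated vector bundle $V(\mathcal{E})$ landing in $V^{\text{reg}}$. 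Groupoid counting then gives
\[
|\mathcal{M}_{\mathcal{F}}^{\text{min}}(k)|=\sum_{\mathcal{E}}\frac{\#\{\,s\in H^0(C,V(\mathcal{E}))\ \text{regular and minimal}\,\}}{|\text{Aut}(\mathcal{E})|},
\]
while by Proposition \ref{minimal data} the denominator $|\mathcal{A}^{\text{min}}_{\mathcal{F}}(k)|$ equals, up to the factor $|k^{\times}|$, the number of minimal tuples in $\bigoplus_i H^0(C,\mathcal{F}^{\otimes i})$. For the semi-stable variant one replaces ``regular and minimal'' by ``the associated Weierstrass model is semi-stable with irreducible fibres'' and the denominator by the mass of such curves, computed via Proposition \ref{regular locus in semi-stable case} (following \cite{HLN14}).

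I would next discard the constraints on the sections. By the density computations of $\S\ref{section 3}$ — Propositions \ref{regular locus in general case} and \ref{regular locus in transversal case}, which exploit that $V\setminus V^{\text{reg}}$ has codimension $\geq 2$ in every fibre of $\pi$ while the discriminant locus has codimension $1$ — the proportion of sections of $V(\mathcal{E})$ that fail the relevant condition tends to $0$, so one may replace the constrained count by $|H^0(C,V(\mathcal{E}))|$ at the price of an error absorbed into $f(q)$; this is also the point at which the transversal case, treated in Theorem \ref{main theorem 2}, loses its error term.

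The core is then to estimate $\sum_{\mathcal{E}}|H^0(C,V(\mathcal{E}))|/|\text{Aut}(\mathcal{E})|$. Using the parabolic canonical reduction theory of $\text{GSO}(2n+2)$-bundles in characteristic $p>4n$, I would stratify the groupoid of $G$-bundles by the conjugacy class of the parabolic $P$ to which $\mathcal{E}$ canonically reduces; the induced filtration of $V(\mathcal{E})$ (see \ref{V(E)}) then has Harder--Narasimhan slopes governed by that reduction. On the open, semi-stable stratum the slopes are controlled by $d=\deg\mathcal{F}$, so for $d$ large $H^1(C,V(\mathcal{E}))=0$ and Riemann--Roch computes $|H^0(C,V(\mathcal{E}))|$ exactly; summing over semi-stable $\mathcal{E}$ against the Siegel mass formula for $\text{SO}(2n+2)$ and dividing by $|\mathcal{A}^{\text{min}}_{\mathcal{F}}(k)|$ produces the main term, which evaluates to $4\prod_{v\in|C|}(1+|k_v|^{-n-1})$ in the minimal case, the Euler factor being exactly the local correction coming from places where the minimal Weierstrass model degenerates into a reducible fibre — a phenomenon excluded once one imposes semi-stability with irreducible fibres, whence the bare $4$ there. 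The residual $+2$ is the contribution of the two Kostant sections (equivalently, the two $\text{SO}$-orbits over each invariant, cf. the Remark after Proposition \ref{stabilizer and orbits}), which give global sections of $V(\mathcal{E})$ for the split bundle and every $\underline{c}$, while all remaining (proper-parabolic) strata are shown to contribute $o(1)$.

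The main obstacle I expect is exactly this last bookkeeping. For an unstable $\mathcal{E}$ one no longer has $H^1(C,V(\mathcal{E}))=0$, so Riemann--Roch must be replaced by an upper bound for $|H^0|$ in terms of the slopes of the canonical reduction, and this must be played off against an Atiyah--Bott-type estimate for the mass of $G$-bundles with a prescribed canonical reduction; arranging that every proper-parabolic stratum is genuinely negligible uniformly in $d$, and that the surviving numerics assemble to precisely $4\prod_{v}(1+|k_v|^{-n-1})+2$ (resp. $6$), is delicate. The hypotheses $q>4^{1/2(n+1)(2n+1)}$ (resp. $q>4^{4(2n+1)}$) are what ensure that the crude bound on the non-regular contribution — of size roughly $4^{n(2n+1)(2n+2)d}$ as in the preceding proposition — is overwhelmed by the $q^{(n+2)(2n+1)d}$ sitting in the denominator.
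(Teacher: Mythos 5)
Your overall skeleton is the paper's: interpret $|\mathcal{M}_{\mathcal{F}}(k)|$ as a groupoid count of pairs $(\mathcal{E},s)$ with $s$ a regular section of $(\mathcal{E}\times^G V)\otimes\mathcal{F}$, stratify $\text{Bun}_G$ by the canonical parabolic reduction of the associated $\text{GSO}(2n+2)$-bundle, extract the main term from the stratum where every entry of the filtration (\ref{V(E)}) has degree $>2g-2$ (so Riemann--Roch is exact and the mass of that stratum tends to $|\text{Bun}_G(k)|=4q^{\dim G (g-1)}\zeta_C(n+1)\prod_{i=1}^n\zeta_C(2i)$), and obtain the $+2$ from the two extremal Borel strata whose sections factor through the Kostant sections $\kappa_1,\kappa_2$ (Proposition \ref{factor through kostant 1}). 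This is exactly the decomposition of Table \ref{table 1}.

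There is, however, a genuine error in your second paragraph. The proportion of sections of $V(\mathcal{E},\mathcal{F})$ failing regularity does \emph{not} tend to $0$: Proposition \ref{regular locus in general case} (via the sieve of \cite{HLN14}, which is where the codimension-$2$ property of $V\setminus V^{\text{reg}}$ enters --- codimension $2$ in $V$, not "in every fibre of $\pi$"; over nonzero discriminant the fibres are entirely regular) shows that the density of regular sections is the Euler product $\prod_v c_v/|k_v|^{\dim V}$, bounded by $\zeta_C(n+1)^{-1}\prod_{i=1}^n\zeta_C(2i)^{-1}\prod_v(1+|k_v|^{-n-1})$, which is strictly less than $1$. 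You cannot both "replace the constrained count by $|H^0(C,V(\mathcal{E}))|$ at the price of an error absorbed into $f(q)$" and then claim that the factor $\prod_v(1+|k_v|^{-n-1})$ in the main term is the local correction for reducible fibres: that factor \emph{is} what remains of the regular-locus density after it cancels the zeta factors coming from the Tamagawa number, as in the computation closing Section \ref{case 4}. Your two paragraphs are mutually inconsistent, and the clean constant $6$ in the semi-stable case genuinely requires keeping the density as a multiplicative factor and invoking Proposition \ref{regular locus in semi-stable case}(iii) for the exact cancellation, not an $o(1)$ absorption. Two smaller misattributions: the hypotheses on $q$ are not about a "non-regular contribution" but about discarding curves with $H^0(K,J_H[2])\neq 0$, which is what is needed to pass from $|H^1(C,J_{\mathcal{H}}[2])|$ to $|Sel_2|$ and hence really belongs to the deduction of Theorem \ref{main theorem 1} from this one; and the disappearance of the error term in the transversal case comes from Proposition \ref{ignore error term} (sections over the non-extremal Borel strata never have squarefree discriminant, so those strata contribute exactly $0$), not from a density statement.
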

For the transversal case, we may erase the condition of the size of our base field as in the previous theorem, and obtain an exact limit:
\begin{theorem} \label{stack version of main theorem 2}
$$ \lim_{d \rightarrow \infty} \frac{\sum\limits_{\mathcal{F}\in \text{Bun}^{1,\leq d}(C)} |\mathcal{M}_{\mathcal{F}}^{\text{sf}}(k)|}{\sum\limits_{\mathcal{F}\in \text{Bun}^{1,\leq d}(C)} |\mathcal{A}_{\mathcal{F}}^{\text{sf}}(k)|} = 6.
$$
\end{theorem}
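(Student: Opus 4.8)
The plan is to reduce the identity to a point count on a stack of bundles--with--section, to evaluate that count by the canonical--reduction and Riemann--Roch method of \S\ref{section 3}--\S\ref{section 4}, and then to check that the $q$-dependent corrections appearing for the general family genuinely vanish once one restricts to transversal curves. The first step is to rewrite both sides geometrically. By the $\mathbb{G}_m$-equivariant isomorphism $BJ_S[2]\cong[V^{\text{reg}}/G]$ of Corollary \ref{jacobian and orbits}, together with the identification $\mathcal{A}=[S/\mathbb{G}_m](C)$, a $k$-point of $\mathcal{M}_{\mathcal{F}}^{\text{sf}}(k)$ is the same datum as a $\text{GSO}(2n+2)$-torsor $\mathcal{E}$ over $C$ with similitude line bundle $\mathcal{F}$, equipped with a global section $\sigma$ of the associated bundle $V(\mathcal{E})$ (see \S\ref{canonical reduction theory}) that is everywhere regular and whose discriminant section is squarefree, while $\mathcal{A}_{\mathcal{F}}^{\text{sf}}(k)$ is just the set of squarefree invariants $\underline{c}=(c_2,\dots,c_{2n+2})$ with $c_i\in H^0(C,\mathcal{F}^{\otimes i})$. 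Since $H$ is transversal, Proposition \ref{selmer group and the first coho} gives the \emph{exact} equality $|\text{Sel}_2(H)|=|H^1(C,J_{\mathcal{H}}[2])|$, so there is no $|H^0(K,J_H[2])|$ factor and no lower bound on $q$ is needed; the quantity to be averaged over transversal $\underline{c}$ is therefore exactly the groupoid mass of the fibre $[V^{\text{reg}}_{\underline{c}}/G](C)$, and the theorem asserts that this average equals $6$.

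Next I would evaluate numerator and denominator separately for $\deg\mathcal{F}=d$ and let $d\to\infty$. The denominator is the count of global sections $\underline{c}$ with $\Delta(\underline{c})$ squarefree, which by the squarefree sieve of \cite{Poo03} (cf. \cite{bhargavasquarefree}) equals a convergent Euler product times $q^{Nd+O(1)}$, where $N=\sum_{i=2}^{2n+2}i$ is the total Riemann--Roch dimension of the parameter space. For the numerator I would stratify $\text{Bun}_{\text{GSO}(2n+2)}$ by the canonical parabolic reduction of \S\ref{canonical reduction theory}; on each stratum the induced filtration of $V(\mathcal{E})$ (Proposition \ref{V(E)}) makes $H^0(C,V(\mathcal{E}))$ accessible by Riemann--Roch, so one can separate the contribution of the dominant, ``semistable'' stratum from that of the unstable strata, which assemble into geometric series in negative powers of $q^{d}$. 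Dividing by the denominator, the common factor $q^{Nd}$ and the common squarefree-density Euler product cancel, leaving the average number of everywhere-regular rational orbits per transversal invariant. The feature special to this family is that a squarefree discriminant forces every non-smooth fibre of $\mathcal{H}\to C$ to be a single node, hence $\mathcal{H}$ regular and $J_{\mathcal{H}}$ the N\'eron model; this rigidifies the behaviour of $(\mathcal{E},\sigma)$ at the bad points and removes precisely the non-dominant strata and local factors responsible for the correction terms $f(q)$ and $\prod_{v}(1+|k_v|^{-n-1})$ in Theorem \ref{stack version of main theorem 1}. This is the step that upgrades the bound $\le 6+f(q)$ to the exact value $6$.

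For the leading constant, over $\overline{k}$ each regular fibre $V^{\text{reg}}_{\underline{c}}$ carries exactly two $G$-orbits --- those of the Kostant sections $\kappa_1$ and $\kappa_2$ --- which are defined over the prime field and hence always contribute two rational orbits; combining Proposition \ref{stabilizer and jacobian} with a cohomological Euler-characteristic count over $C$ of the torsors under $J_{\mathcal{H}}[2]=(\text{Res}_{L/K}\mu_2)_{N=1}/\mu_2$ should show that the two Kostant branches contribute $2$ and the remaining regular orbits contribute $4$ on average, for a total of $6$ (matching the $4+2$ appearing in the $q\to\infty$ limit of Theorem \ref{stack version of main theorem 1}). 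I expect the main obstacle to be the bundle-theoretic bookkeeping of the previous paragraph: proving, stratum by stratum and uniformly in $d$, that no canonical-reduction stratum other than the dominant one survives in the limit and that transversality kills the $q$-dependent local factors exactly. The density input from \cite{Poo03}, the orbit--torsor dictionary of \S\ref{section 2}, and the Riemann--Roch estimates are then comparatively routine.
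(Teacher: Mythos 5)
Your overall architecture matches the paper's: reduce to counting pairs $(\mathcal{E},\sigma)$ via Corollary \ref{jacobian and orbits}, stratify $\text{Bun}_{\text{GSO}(2n+2)}$ by canonical reduction, apply Riemann--Roch on each stratum, and land on the decomposition $6=4+2$ with the $4$ coming from the dominant stratum (Tamagawa number of $\text{PSO}(2n+2)$ combined with the density of transversal regular sections, Proposition \ref{regular locus in transversal case}(iii)) and the $2$ from the two extremal Borel strata whose sections factor through the Kostant sections $\kappa_1,\kappa_2$. The exact (rather than inequality) relation $|\text{Sel}_2(H)|=|H^1(C,J_{\mathcal{H}}[2])|$ from Proposition \ref{selmer group and the first coho} is also correctly invoked.

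However, there is a genuine gap at the one step that actually distinguishes the transversal theorem from the general upper bound: your explanation for why the error term $f(q)$ disappears is not the right mechanism and would not close the argument. You attribute it to the fact that squarefree discriminant forces $\mathcal{H}$ to be regular with $J_{\mathcal{H}}$ the N\'eron model; but that fact only governs the comparison between $\text{Sel}_2$ and $H^1(C,J_{\mathcal{H}}[2])$, which is already settled by Proposition \ref{selmer group and the first coho} and has nothing to do with which bundle strata survive. In the general case, $f(q)$ arises from the \emph{intermediate} Borel strata --- those with $f/2<\mu_i-\mu_{i+1}\le f$, $f/2<2\mu_{n+1}-d\le f$ but $2\mu_1-d<(2n+1)f$ --- which genuinely carry regular sections and contribute a nonzero (though $O(1/q)$) amount. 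What the paper proves (Proposition \ref{ignore error term}) is that on every such stratum, \emph{every} section has non-squarefree discriminant: some sub-diagonal entry $x_i$ of the matrix form of $\sigma$ is a section of a line bundle of positive degree, hence vanishes at some $v\in|C|$, and an explicit factorization of the characteristic polynomial modulo $x_i$ shows the discriminant vanishes to order at least $2$ at $v$. This concrete computation is what removes those strata from the transversal count; your proposal contains no substitute for it. Relatedly, your remark that ``no canonical-reduction stratum other than the dominant one survives'' is in tension with your own (correct) claim that the Kostant branches contribute $2$: two non-dominant Borel strata do survive, and the point of Proposition \ref{ignore error term} is precisely to show that only those two do. Finally, the cancellation giving the constant $4$ is not a cancellation of ``the common squarefree-density Euler product'': the numerator density $\prod_v(1-\beta_v)$ and denominator density $\prod_v(1-\alpha_v)$ differ by exactly $\prod_v|G(k_v)|/|k_v|^{\dim G}$ (Proposition \ref{regular locus in transversal case}(iii)), and it is this ratio that cancels the $\zeta$-factors in $|\text{Bun}_G(k)|$ to leave the Tamagawa number $4$.
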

It is easy to compute the size of $\mathcal{A}_{\mathcal{F}}(k)$ directly by using Riemann-Roch theorem, but it is not the case for $\mathcal{M}_{\mathcal{F}}(k).$ Fortunately, Corollary \ref{jacobian and orbits} provides us with another description of $\mathcal{M}(k)$: namely, we have an isomorphism
$$\mathcal{M} \cong Hom(C, [V^{\text{reg}}/(G \times \mathbb{G}_m)]).$$Hence, by definition, a $k-$point of $\mathcal{M}$ is an isomorphism class of a triple $(\mathcal{F}, \mathcal{E}, s),$ where $\mathcal{F}$ is a line bundle over $C$, $\mathcal{E}$ is a principal $G-$bundle, and $s$ is a section of $V^{\text{reg}}(\mathcal{E},\mathcal{F})= (V^{\text{reg}}\times^G \mathcal{E}) \otimes \mathcal{F}$. In the next section, by using reduction of $\mathcal{E}$ to some parabolic subgroups of $G$, we will see how to compute the number of sections of $V^{\text{reg}}(\mathcal{E},\mathcal{F})$ precisely.  
\section{Ingredients of the proof of main theorems}
\label{section 3}
In the first subsection, we will compute the portion between the number of regular sections and the number of general sections. Based on Poonen's results in \cite{Poo03} and some arguments in \cite{HLN14}, we deduce the portion by estimating the density of the regular locus. After that, we introduce the canonical reduction of $G-$bundles by associating them with $\text{GSO}(2n+2)-$bundles. As a consequence, we will be able to estimate the size of the automorphism group of a principal $G-$bundle.
\subsection{Density of regular locus}
For each closed point $v \in |C|$, let $K_v$ be the completion of $K$ at $v$, $\mathcal{O}_{K_v}$ be its ring of integers, and $k_v= \mathcal{O}_{K_v}/\varpi_v$ be the residue field at $v$. Then by the result in \cite{HLN14}, if we set $c_v = |V^{\text{reg}}(k_v)|$ then for any $G-$bundle $\mathcal{E}$ we have that 
\begin{equation}
\label{limit of the density of regular locus}
    \lim_{\text{deg}(\mathcal{F}) \rightarrow \infty} \frac{|H^0(\mathcal{E} \times^G V^{\text{reg}} \otimes \mathcal{F})|}{|H^0(\mathcal{E} \times^G V \otimes \mathcal{F})|} = \prod_{v \in |C|} \frac{c_v}{|k_v|^{\text{dim}(V)}}.
\end{equation}
Moreover, we have an upper bound of the above limit:
\begin{proposition} \label{regular locus in general case}
By using the same notation as above, the limit (\ref{limit of the density of regular locus}) is bounded above by
$$\zeta_{C}(n+1)^{-1}.\prod_{i=1}^n \zeta_{C}(2i)^{-1}. \prod_{v \in |C|}(1 + |k_v|^{-n-1}|)$$
\end{proposition}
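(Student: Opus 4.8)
The plan is to evaluate each local factor $c_v/|k_v|^{\dim V}$ exactly, recognise the resulting Euler product as a product of special values of $\zeta_C$, and observe that the factor $\prod_{v}(1+|k_v|^{-n-1})\ge 1$ only weakens the bound. Write $q_v=|k_v|$ and decompose $c_v=\sum_{f\in S(k_v)}|V^{\mathrm{reg}}_f(k_v)|$ according to the characteristic polynomial, where $V^{\mathrm{reg}}_f=V^{\mathrm{reg}}\cap\pi^{-1}(f)$ and $|S(k_v)|=q_v^{2n+1}$. A direct count of self-adjoint trace-zero operators on the $(2n+2)$-dimensional quadratic space $(U,Q)$ gives $\dim V=\binom{2n+3}{2}-1=(n+1)(2n+1)+(2n+1)=\dim G+(2n+1)$. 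Hence it suffices to prove that $|V^{\mathrm{reg}}_f(k_v)|=|G(k_v)|$ for every $f\in S(k_v)$: then $c_v/q_v^{\dim V}=q_v^{2n+1}|G(k_v)|/q_v^{\dim G+2n+1}=|G(k_v)|/q_v^{\dim G}$.

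For the identity $|V^{\mathrm{reg}}_f(k_v)|=|G(k_v)|$, I would pull back the $\mathbb{G}_m$-equivariant isomorphism $[V^{\mathrm{reg}}/G]\cong BJ_S[2]$ of Corollary \ref{jacobian and orbits} along the point $f\colon \mathrm{Spec}\,k_v\to S$, obtaining an isomorphism of stacks $[V^{\mathrm{reg}}_f/G]\cong BJ_f[2]$ over $k_v$. Because $\mathrm{char}\,k_v=p>4n$, in particular $p\neq 2$, the group scheme $J_f[2]$ is finite étale over $k_v$ even when $y^2=f(x)$ is singular: the torus and abelian-variety parts of the generalized Jacobian contribute finite $2$-torsion and the unipotent part contributes none. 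Therefore $|BJ_f[2](k_v)|=|H^1(k_v,J_f[2])|/|H^0(k_v,J_f[2])|=1$, the last equality because the invariants and coinvariants of Frobenius on a finite abelian group have the same order. On the other hand $H^1(k_v,G)=0$ by Lang's theorem, since $G=\mathrm{PSO}(U)$ is connected; consequently every object of $[V^{\mathrm{reg}}_f/G](k_v)$ is represented by a genuine $k_v$-point of $V^{\mathrm{reg}}_f$ (which is in any case nonempty, as it contains $\kappa_1(f)$), so the groupoid cardinality of $[V^{\mathrm{reg}}_f/G](k_v)$ equals $|V^{\mathrm{reg}}_f(k_v)|/|G(k_v)|$. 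Comparing with the value $1$ computed on the other side yields $|V^{\mathrm{reg}}_f(k_v)|=|G(k_v)|$, hence $c_v/q_v^{\dim V}=|G(k_v)|/q_v^{\dim G}$.

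It remains to substitute the order of the split group of type $D_{n+1}$. Since $U$ is split, $G$ is the split adjoint group of type $D_{n+1}$, and all its isogeny forms have the same order $q_v^{n(n+1)}(q_v^{n+1}-1)\prod_{i=1}^{n}(q_v^{2i}-1)$ over $k_v$ (once more by Lang's theorem, applied to the central isogeny $\mathrm{SO}(U)\to\mathrm{PSO}(U)$). Dividing by $q_v^{\dim G}=q_v^{(n+1)(2n+1)}$ gives $c_v/q_v^{\dim V}=(1-q_v^{-n-1})\prod_{i=1}^{n}(1-q_v^{-2i})$. Taking the product over all $v\in|C|$ and using the Euler product $\zeta_C(s)=\prod_v(1-|k_v|^{-s})^{-1}$ we obtain $\prod_v c_v/q_v^{\dim V}=\zeta_C(n+1)^{-1}\prod_{i=1}^{n}\zeta_C(2i)^{-1}$; since $1+|k_v|^{-n-1}\ge 1$ for every $v$, this is bounded above by $\zeta_C(n+1)^{-1}\prod_{i=1}^{n}\zeta_C(2i)^{-1}\cdot\prod_v(1+|k_v|^{-n-1})$, which is the assertion.

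The only genuinely delicate point is the fibrewise use of Corollary \ref{jacobian and orbits} over the discriminant locus $\{\Delta(f)=0\}\subset S$, where $J_f$ is a bona fide generalized Jacobian rather than an abelian variety: one must check that $J_f[2]$ remains finite, so that the Tate-duality count $|BJ_f[2](k_v)|=1$ is still valid. This is exactly where the running hypothesis $p\neq 2$ is used. Everything else — Lang's theorem, the order formula for $D_{n+1}$, and the dimension count $\dim V=\dim G+2n+1$ — is routine bookkeeping; in fact the argument produces equality in the penultimate step, the stated inequality being obtained only after discarding the harmless factor $\prod_v(1+|k_v|^{-n-1})$.
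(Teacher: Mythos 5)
There is a genuine gap at the central step: your claim that $|V^{\mathrm{reg}}_f(k_v)|=|G(k_v)|$ for \emph{every} $f\in S(k_v)$ is false, and the fibrewise application of Corollary \ref{jacobian and orbits} that you use to justify it is exactly where the argument breaks. Over $\overline{k_v}$ the group $\mathrm{PO}(U)$ acts transitively on $V^{\mathrm{reg}}_f$, but $G=\mathrm{PSO}(U)$ does not in general: the $\mathrm{PO}$-orbit splits into two $G$-orbits precisely when $f$ is a perfect square in $\overline{k_v}[x]$, because then every element of the stabilizer $\mu_2(L)$, $L=\overline{k_v}[x]/(f)$, acts with determinant $+1$ and one cannot correct the sign of the element $J$ conjugating $\kappa_1(f)$ to $\kappa_2(f)$. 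For such $f$ (necessarily $f=g^2$ with $g\in k_v[x]$ monic) both orbits contain the rational points $\kappa_1(f)$ and $\kappa_2(f)$, are Frobenius-stable, and each contributes exactly $|G(k_v)|$ rational points by Lang's theorem applied to $G/\mathrm{Stab}$; hence $|V^{\mathrm{reg}}_f(k_v)|=2|G(k_v)|$ there. Correspondingly, $[V^{\mathrm{reg}}_f/G]$ is not a gerbe over the square locus (it has two geometric components), so it cannot be identified with $BJ_f[2]$ fibrewise, and the groupoid count $|[V^{\mathrm{reg}}_f/G](k_v)|=1$ fails — it equals $2$. You correctly flagged the discriminant locus as the delicate point but worried about the wrong issue (finiteness of $J_f[2]$ rather than transitivity of the $G$-action).

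The consequence is that your asserted equality $\prod_v c_v/q_v^{\dim V}=\zeta_C(n+1)^{-1}\prod_{i=1}^n\zeta_C(2i)^{-1}$ is not correct: the square locus consists of $q_v^n$ invariants out of $q_v^{2n+1}$, each contributing a doubled orbit count, which is precisely the origin of the extra factor $\prod_v\bigl(1+|k_v|^{-n-1}\bigr)$ in the statement — it is $\bigl(q_v^{2n+1}+q_v^{n}\bigr)/q_v^{2n+1}$, not a gratuitous weakening. The paper's proof proceeds exactly by this dichotomy: $|V^{\mathrm{reg}}_f(k_v)|=|G(k_v)|$ when $f$ is not a square (proved by exhibiting a root of odd multiplicity and adjusting determinants inside the stabilizer), and $|V^{\mathrm{reg}}_f(k_v)|\le 2|G(k_v)|$ in general, giving $c_v\le |G(k_v)|(q_v^{2n+1}+q_v^{n})$. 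Although the final inequality you write down happens to be true, it is derived from a false intermediate evaluation, so the argument does not constitute a proof; repairing it requires replacing the uniform orbit count by the square/non-square case analysis above. Your dimension bookkeeping ($\dim V=\dim G+2n+1$) and the order formula for split $D_{n+1}$ are fine.
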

\begin{proof}
Our ultimate goal is to compute $c_v=|V^{\text{reg}}(k_v)|$ for each $v \in |C|$. Observe that for each invariant $\underline{c} = (c_2,c_3, \dots, c_{2n+2}) \in S(k_v),$ $V_{\underline{c}}^{\text{reg}}(\Bar{k_v})$ has at most two $G(\Bar{k_v})-$orbits. Hence, we can show that $|V_{\underline{c}}^{\text{reg}}(k_v)| \leq 2.|G(k_v)|$ (see Proposition 3.22 in \cite{DVT17}). Additionally, if the polynomial 
$$f_{\underline{c}}(x)=x^{2n+2} + c_2x^{2n}+ \cdots + c_{2n+1}x + c_{2n+2}$$
is in the compliment $k_v[x] \setminus (k_v[x])^2$, the action of $G(\overline{k_v})$ on $V_{\underline{c}}^{\text{reg}}(\overline{k_v})$ is transitive. In fact, if the monic polynomial $f_{\underline{c}}(x)$ is not a square in $k_v[x]$, it will have a root, say $x_1$, in $\overline{k_v}$ of odd order. Followed by [\cite{Wan1}, Proposition 1.41], we have that $PO(2n+2)(\overline{k_v})$ acts on $V_{\underline{c}}^{\text{reg}}(\Bar{k_v})$ transitively. For any $T$ and $T'$ in $V_{\underline{c}}^{\text{reg}}(\Bar{k_v})$, the element $g \in PO(2n+2)$, which satisfies $T = g.T'.g^{-1}$, can be constructed as follows: firstly, we choose an element $h \in \text{GL}(U)$ such that $T'=h.T.h^{-1}$, then 
$$h^*h.T.h^{-1}.(h^*)^{-1} = h^*.T'.(h^*)^{-1} = (h^{-1}.T'.h)^*= T^*=T,$$here we use the fact that $T$ and $T'$ are self-adjoint. Thus, $h^*h$ stabilizes $T$; hence, we can consider it as an element of $L^{\times}= \big(\overline{k_v}[x]/(f_{\underline{c}}(x))\big)^{\times}$. Since we are working on the algebraically closed field $\overline{k_v}$, there exists $t \in L^{\times}$ such that $h^*h = t^2$. Now we can check that the product $h.t^{-1}$ is in $O(2n+2)$, and $T'=ht^{-1}.T.th^{-1}$. Finally, we take $g$ to be the image of $h.t^{-1}$ in $PO(U)$. Now if we set
$$f_{\underline{c}}(x) = \prod_{i=1}^l (x-x_i)^{m_i},$$
then the quotient $L= \overline{k_v}[x]/(f_{\underline{c}}(x))$ is the product of some quotient rings of the form $\overline{k_v}[x]/((x-x_i)^{m_i})$. Without loss of generality, we can assume that $m_1$ is odd. In that case, the element $(-1,1,\dots, 1) \in L$ has determinant $-1$. By multiplying $g$ with that element (if necessary), we can choose an element in $G= \text{PSO}(U)$ that maps $T$ to $T'$ under the conjugation. To sum up, if $f_{\underline{c}}(x)$ is not a square in $k_v[x]$, then $G(\overline{k_v})$ acts on $V_{\underline{c}}(\overline{k_v})$ transitively. As a result, in that case, $|V_{\underline{c}}(k_v)|$ is equal to $|G(k_v)|$ (see the proof of Proposition 3.22 in \cite{DVT17}). We deduce that
\begin{align}
    \label{inequality 1}
    |V^{\text{reg}}(k_v)| \leq \sum_{\underline{c} \in S_1} |G(k_v)| + \sum_{\underline{c} \in S(k_v) \setminus S_1} 2|G(k_v)| 
    = \sum_{\underline{c} \in S(k_v)} |G(k_v)| + \sum_{\underline{c} \in S(k_v)\setminus S_1} |G(k_v)|,
\end{align}
where $S_1 = \{ \underline{c} \in S(k_v) \,\,|\,\,f_{\underline{c}}(x) \in k_v[x]^2 \}$. Now we will finish the proof by computing the size of $S_1$. For any $\underline{c} \in S_1$, we set
$$f_{\underline{c}}(x)= (x^{n+1} + a_2x^{n-1} + \cdots + a_nx + a_{n+1})^2,$$where $a_i \in k_v$. Then $\underline{c}$ is defined uniquely by $n$ coefficients in $k_v$. We deduce that the size of $S_1$ equals to $|k_v|^n$. The upper bound (\ref{inequality 1}) now becomes to
$$c_v=|V^{\text{reg}}(k_v)| \leq |G(k_v)|.(|k_v|^{2n+1} + |k_v|^{n}). $$By taking the product of $c_v$ running over all closed points of $C$, we get an upper bound of the limit (\ref{limit of the density of regular locus}):
\begin{align*}
    \prod_{v \in |C|} \frac{c_v}{|k_v|^{\text{dim}(V)}} &\leq \prod_{v \in |C|} \frac{|G(k_v)|.(|k_v|^{2n+1} + |k_v|^{n})}{|k_v|^{\text{dim}(V)}} \\
    &= \prod_{v \in |C|} (1-\frac{1}{|k_v|^{2}}).(1-\frac{1}{|k_v|^{4}})\dots (1-\frac{1}{|k_v|^{2n}}). (1-\frac{1}{|k_v|^{n+1}}). (1+\frac{1}{|k_v|^{n+1}}) \\
    &= \zeta_{C}(n+1)^{-1}.\prod_{i=1}^n \zeta_{C}(2i)^{-1}. \prod_{v \in |C|}(1 + |k_v|^{-n-1}|). 
\end{align*}
\end{proof}
To compute the average over the set of hyperelliptic curves over $K$, we need to impose the minimality condition. A technique that will be developed in the next sections will allow us to compute the density of regular locus in the minimal case. Precisely, that number is the product of local masses. For example, at a place $v \in |C|,$ the density of tuples $(\mathcal{F}, \underline{c})$ that are minimal at $v$ is (assume that $d = \text{deg}(\mathcal{F})$ is large)
$$1-\frac{|H^0(C, \mathcal{F}(-v)^{\otimes 2} \oplus \mathcal{F}(-v)^{\otimes 3} \oplus \cdots \oplus \mathcal{F}(-v)^{\otimes 2n+2})|}{|H^0(C, \mathcal{F}^{\otimes 2} \oplus \mathcal{F}^{\otimes 3} \oplus \cdots \oplus \mathcal{F}^{\otimes 2n+2})|} = 1 - |k_v|^{-(n+2)(2n+1)}.$$
Hence, we have just proven the following proposition.
\begin{proposition} \label{regular locus in minimal case 1} The density of minimal hyperelliptic surfaces is
	$$\lim_{\text{deg}(\mathcal{F}) \rightarrow \infty} \frac{|H^0(C, S \times^{\mathbb{G}_m} \mathcal{F})^{\text{min}}|}{|H^0(C, S \times^{\mathbb{G}_m} \mathcal{F})|} = \zeta_C((n+2)(2n+1))^{-1}$$
\end{proposition}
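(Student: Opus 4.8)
The plan is to realize minimality as an intersection of independent local conditions and then run an Ekedahl--Poonen type sieve, in the spirit of \cite{Poo03} and of the mass computations in \cite{HLN14}.

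First I would reformulate minimality place by place. A global section of $S\times^{\mathbb{G}_m}\mathcal{F}$ is a tuple $\underline{c}=(c_2,\dots,c_{2n+2})$ with $c_i\in H^0(C,\mathcal{F}^{\otimes i})$, and I claim it fails to be \emph{minimal} exactly when there is a closed point $v\in|C|$ with $c_i\in H^0(C,\mathcal{F}(-v)^{\otimes i})$ for every $i$. Indeed, if a proper subsheaf $\mathcal{M}\subset\mathcal{F}$ witnesses non-minimality, then picking $v$ in the (nonempty) support of the torsion sheaf $\mathcal{F}/\mathcal{M}$ we have $\mathcal{M}\subseteq\mathcal{F}(-v)$, hence $c_i\in H^0(C,\mathcal{M}^{\otimes i})\subseteq H^0(C,\mathcal{F}(-v)^{\otimes i})$ for all $i$; conversely $\mathcal{F}(-v)$ itself witnesses non-minimality whenever the displayed containments hold. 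So, writing $P_v\subset H^0(C,S\times^{\mathbb{G}_m}\mathcal{F})$ for the subset of tuples with $c_i\in H^0(C,\mathcal{F}(-v)^{\otimes i})$ for all $i$, the minimal tuples are the complement of $\bigcup_{v\in|C|}P_v$.

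Next I would compute local densities and check independence. Once $d=\deg(\mathcal{F})$ is large enough that $H^1(C,\mathcal{F}^{\otimes i})=0$ and $H^1(C,\mathcal{F}^{\otimes i}(-iv))=0$ for all $2\le i\le 2n+2$ (which holds as long as $\deg v<d-g+1$), Riemann--Roch gives $\dim H^0(C,\mathcal{F}^{\otimes i})-\dim H^0(C,\mathcal{F}^{\otimes i}(-iv))=i\deg v$, whence
$$\frac{|P_v|}{|H^0(C,S\times^{\mathbb{G}_m}\mathcal{F})|}=\prod_{i=2}^{2n+2}|k_v|^{-i}=|k_v|^{-\sum_{i=2}^{2n+2}i}=|k_v|^{-(n+2)(2n+1)},$$
using $\sum_{i=2}^{2n+2}i=(n+1)(2n+3)-1=(n+2)(2n+1)$; this is the local computation already recorded just before the statement. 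The same Riemann--Roch argument applied to $\bigoplus_i\mathcal{F}^{\otimes i}(-iD)$ for an effective divisor $D$ shows that, for any finite $T\subset|C|$ and $d$ large depending on $T$, the conditions cutting out the $P_v$, $v\in T$, are independent, so the density of tuples minimal at every $v\in T$ is $\prod_{v\in T}\bigl(1-|k_v|^{-(n+2)(2n+1)}\bigr)$.

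Finally I would pass to the limit through a tail estimate. Fixing $m$ and setting $T_m=\{v\in|C|:\deg v<m\}$, the difference between $\prod_{v\in T_m}(1-|k_v|^{-(n+2)(2n+1)})$ and the density of genuinely minimal tuples is at most the density of $\bigcup_{\deg v\ge m}P_v$; using the crude bound $|H^0(C,\mathcal{L})|\le\max(1,q^{\deg\mathcal{L}+1})$ to treat the places $v$ with $\deg v\ge d-g+1$, this difference is $O\!\bigl(\sum_{\deg v\ge m}|k_v|^{-(n+2)(2n+1)}\bigr)+o(1)$ as $d\to\infty$. Since $(n+2)(2n+1)>1$, the Euler product $\sum_{v\in|C|}|k_v|^{-(n+2)(2n+1)}$ converges, so its tail tends to $0$ as $m\to\infty$; letting $d\to\infty$ and then $m\to\infty$ yields
$$\lim_{\deg(\mathcal{F})\to\infty}\frac{|H^0(C,S\times^{\mathbb{G}_m}\mathcal{F})^{\mathrm{min}}|}{|H^0(C,S\times^{\mathbb{G}_m}\mathcal{F})|}=\prod_{v\in|C|}\bigl(1-|k_v|^{-(n+2)(2n+1)}\bigr)=\zeta_C((n+2)(2n+1))^{-1}.$$
The main obstacle is precisely this tail estimate: making the error uniform in $d$ requires the standard bookkeeping of Poonen's sieve --- handling places of degree comparable to or exceeding $d$, where naive dimension counts break down, and non-reduced witnessing divisors $D$ with $\deg D\ge m$ --- which is exactly what the techniques of \cite{Poo03} and \cite{HLN14}, developed in the sections that follow, supply.
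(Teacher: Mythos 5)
Your proposal is correct and follows essentially the same route as the paper: the paper records exactly the same local computation (density $1-|k_v|^{-(n+2)(2n+1)}$ of tuples minimal at $v$, via the ratio $|H^0(\mathcal{F}(-v)^{\otimes i})|/|H^0(\mathcal{F}^{\otimes i})|$) and then asserts that the global density is the product of these local masses by the sieve techniques of \cite{Poo03} and \cite{HLN14}. You have merely spelled out the independence and tail estimates that the paper leaves implicit, which is a faithful completion rather than a different argument.
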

Similarly, we also can bound the density of regular sections in the minimal case:
\begin{proposition}
	\label{regular locus in minimal case 2}
	$$ \lim_{\text{deg}(\mathcal{F}) \rightarrow \infty} \frac{|H^0(\mathcal{E} \times^G V^{\text{reg}} \otimes \mathcal{F})^{\text{min}}|}{|H^0(\mathcal{E} \times^G V \otimes \mathcal{F})|} \leq  \zeta_C((n+2)(2n+1))^{-1}.\prod_{i=1}^{n+1} \zeta_{C}(2i)^{-1}.$$
\end{proposition}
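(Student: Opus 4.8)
The plan is to combine the local density computation underlying Proposition \ref{regular locus in general case} with the minimality constraint handled as in Proposition \ref{regular locus in minimal case 1}, since both densities are products of purely local masses. First I would recall from \eqref{limit of the density of regular locus} (and its refinement imposing minimality) that the limit in question, once it exists, factors as a convergent Euler product $\prod_{v \in |C|} \mu_v$, where $\mu_v$ is the local density at $v$ of sections of $V(\mathcal{E},\mathcal{F})$ that are both \emph{regular} at $v$ \emph{and} minimal at $v$. The key observation is that for a self-adjoint operator $T \in V(\mathcal{O}_{K_v})$, regularity of the reduction $\bar T \in V(k_v)$ and minimality of the associated tuple $(c_2,\dots,c_{2n+2})$ at $v$ are conditions on, respectively, $\bar T$ and the reduction $\underline{\bar c}$ of its characteristic polynomial coefficients; so I would bound $\mu_v$ by the product of the local regular density and the local minimal density, each computed separately.

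The main step is therefore to estimate the local factor $\mu_v \le |k_v|^{-\dim V}\,|V^{\mathrm{reg},\mathrm{min}}(k_v)|$, and to do so I would run the orbit-counting argument from the proof of Proposition \ref{regular locus in general case} verbatim: for $\underline{c}\in S(k_v)$ with $f_{\underline c}$ not a square, $G(\overline{k_v})$ acts transitively on $V^{\mathrm{reg}}_{\underline c}(\overline{k_v})$, giving $|V^{\mathrm{reg}}_{\underline c}(k_v)| \le |G(k_v)|$; for the remaining $\underline c$ (those with $f_{\underline c}$ a square, of which there are $|k_v|^n$) one has at most $2|G(k_v)|$. Intersecting with the minimality locus, which cuts the total invariant space $S(\mathcal{O}_{K_v}/\varpi_v^{\text{large}})$ down by the factor $1-|k_v|^{-(n+2)(2n+1)}$ appearing in Proposition \ref{regular locus in minimal case 1}, and then taking the product over all $v$, I would obtain
$$
\prod_{v\in|C|} \mu_v \;\le\; \prod_{v\in|C|} \Big(1-|k_v|^{-(n+2)(2n+1)}\Big)\cdot\prod_{v\in|C|}\frac{|G(k_v)|\,(|k_v|^{2n+1}+|k_v|^n)}{|k_v|^{\dim V}}.
$$
Using $|G(k_v)|/|k_v|^{\dim G} = \prod_{i=1}^{n}(1-|k_v|^{-2i})\cdot(1-|k_v|^{-(n+1)})$ together with $\dim V - \dim G = 2n+1$ and the extra factor $(1+|k_v|^{-(n+1)})$ from $|k_v|^{2n+1}+|k_v|^n$, the product over $v$ telescopes into zeta functions: the $(1-|k_v|^{-(n+1)})(1+|k_v|^{-(n+1)}) = 1-|k_v|^{-2(n+1)}$ combines with $\prod_{i=1}^n(1-|k_v|^{-2i})$ to give $\prod_{i=1}^{n+1}\zeta_C(2i)^{-1}$, while the minimality factor gives $\zeta_C((n+2)(2n+1))^{-1}$, which is exactly the claimed bound.

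I expect the only real subtlety — and hence the part needing the most care — to be the \emph{separation} of the regularity and minimality conditions: a priori $\mu_v$ is the density of the intersection of the two loci, and I claim this is at most the product of the two individual densities. This holds because the minimality condition is governed entirely by the valuations of $(c_2,\dots,c_{2n+2})$ (equivalently by the reduction of the tuple modulo a large power of $\varpi_v$), while the asymptotic regular density per fiber $\underline c$ is uniformly bounded by $|G(k_v)|/|k_v|^{\dim G}\cdot(1+|k_v|^{-(n+1)})$ independently of $\underline c$; so summing the fiberwise bound over only the minimal tuples, rather than all of $S$, yields the product. Everything else is the same $\mathbb{F}_q$-point count as in Proposition \ref{regular locus in general case} combined with the elementary count $|\{f_{\underline c}\ \text{a square}\}| = |k_v|^n$, plus the routine check that the resulting Euler product converges (which it does, since the general term is $1 + O(|k_v|^{-2})$), so I would not belabor those.
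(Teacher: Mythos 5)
Your proposal is correct and follows exactly the route the paper intends: the paper offers no written proof beyond the word ``Similarly,'' meaning precisely the combination of Proposition \ref{regular locus in general case} with the minimality density of Proposition \ref{regular locus in minimal case 1}, and your zeta-function bookkeeping is right, including the identity $\zeta_C(n+1)^{-1}\prod_{v}(1+|k_v|^{-n-1}) = \zeta_C(2n+2)^{-1}$ that turns the bound of Proposition \ref{regular locus in general case} into $\prod_{i=1}^{n+1}\zeta_C(2i)^{-1}$.

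One sub-claim in your treatment of the ``separation'' step is stated too strongly: the regular density \emph{per fiber} $\underline{c}$ is not uniformly bounded by $\frac{|G(k_v)|}{|k_v|^{\dim G}}(1+|k_v|^{-(n+1)})$ independently of $\underline{c}$ --- over the square locus $S_1$ the fiber can contain two full free $G(k_v)$-orbits, so the fiberwise bound there is $\approx 2|G(k_v)|/|k_v|^{\dim G}$, and the factor $(1+|k_v|^{-(n+1)})$ only emerges after averaging over all of $S(k_v)$ using $|S_1|=|k_v|^n$. The product bound nevertheless survives, for a reason worth making explicit: non-minimality at $v$ forces $c_i\equiv 0\ (\mathrm{mod}\ \varpi_v^i)$ for all $i$, hence the non-minimal locus lies entirely over $\underline{c}=0$, which belongs to $S_1$ and whose regular fiber is the full $2|G(k_v)|$ (two Kostant points with trivial stabilizer). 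Removing it therefore deletes at least the ``average share'' $\frac{|G(k_v)|}{|k_v|^{\dim G}}(1+|k_v|^{-(n+1)})\cdot|k_v|^{-(n+2)(2n+1)}$ of regular density, which is exactly what the product $(1-|k_v|^{-(n+2)(2n+1)})\cdot(\text{regular bound})$ requires. With that one correction your argument is complete.
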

\subsection{Density of regular locus in transversal case}
Note that for any $v \in |C|$, if $c \in S(\mathcal{O}_{K_v})$ is transversal, i.e., $\Delta(c) \not\equiv 0 \,\,(\text{mod} \, \varpi_v^2),$ then the reduction $\underline{c} \in S(k_v)$ of $c$ will satisfies that the associated polynomial $f_{\underline{c}}(x)$ is not a square in $k_v[x]$. Consequently, $|V_{\underline{c}}^{\text{reg}}(k_v)|=|G(k_v)|,$ when $c$ is transversal. Now we can estimate the mass of regular locus in the transversal case as follows:
\begin{proposition}
For each $v\in |C|$, we define 
$$\alpha_v= \frac{|\{x \in S(\mathcal{O}_{K_v}/(\varpi_v^2))| \Delta(x) \equiv 0 \hspace{0.2cm}\text{mod} \,(\varpi_v^2) \}|}{|k_v^{4n+2}|}$$and
$$\beta_v= \frac{|\{x \in V(\mathcal{O}_{K_v}/(\varpi_v^2))| \Delta(x) \equiv 0 \hspace{0.2cm}\text{mod} \,(\varpi_v^2) \}|}{|k_v^{2\text{dim}(V)}|},$$
Then we have the following qualities:
\begin{itemize}
\item[i)] $$\lim_{\text{deg}(\mathcal{F})\rightarrow \infty} \frac{|H^0(C,\mathcal{F}^{\otimes 2}\oplus \mathcal{F}^{\otimes 3} \oplus \cdots \oplus \mathcal{F}^{\otimes 2n+2})^{\text{sf}}|}{|H^0(C,\mathcal{F}^{\otimes 2}\oplus \mathcal{F}^{\otimes 3} \oplus \cdots \oplus \mathcal{F}^{\otimes 2n+2})|} = \prod_{v \in |C|}(1-\alpha_v).$$
\item[ii)] $$\lim_{\text{deg}(\mathcal{F})\rightarrow \infty} \frac{|H^0(C,V^{\text{reg}}(\mathcal{E},\mathcal{F}))^{\text{sf}}|}{|H^0(C,V(\mathcal{E},\mathcal{F}))|}= \prod_{v \in |C|}(1-\beta_v)$$
\item[iii)]$$\frac{\prod_{v \in |C|}(1-\beta_v)}{\prod_{v \in |C|}(1-\alpha_v)} = \prod_{v \in |C|}\frac{|G(k_v)|}{|k_v|^{\text{dim}(G)}}$$
\end{itemize}
Here the above script "sf" stands for "square free," i.e., $H^0()^{\text{sf}}$ is the set of sections whose invariants are transversal to the discriminant locus.
\label{regular locus in transversal case}
\end{proposition}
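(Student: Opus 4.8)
The three statements are really about comparing global "square-free-at-the-discriminant" counting with a product of local densities, and they all follow the same template: a sieve argument of Poonen type applied to sections of a line bundle (or of the vector bundle $V(\mathcal{E},\mathcal{F})$), together with a purely local orbit computation to identify the local factors. I would organize the proof as follows.

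First I would prove (i). The set $H^0(C,\bigoplus_{i=2}^{2n+2}\mathcal{F}^{\otimes i})$ is exactly $S(\mathcal{A})$-valued for the relevant line bundle data, and the condition "$\Delta(\underline c)$ is square-free" is the condition that for every closed point $v$, the reduction of $\Delta(\underline c)$ modulo $\varpi_v^2$ is nonzero — equivalently, that $\underline c$ avoids the codimension-$2$ locus $\{\Delta\equiv 0\ (\mathrm{mod}\ \varpi_v^2)\}$ in $S(\mathcal{O}_{K_v}/\varpi_v^2)$. Since the discriminant hypersurface is geometrically irreducible (hence its singular locus has codimension $\geq 2$ in $S$), the "square-divisible-at-$v$" conditions are, for large $\deg\mathcal{F}$, independent across $v$ and each occurs with density $\alpha_v$; this is precisely the content of Poonen's function-field sieve in \cite{Poo03}, with the tail estimate handled exactly as in \cite{HLN14}. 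This gives $\prod_{v\in|C|}(1-\alpha_v)$ in the limit. The proof of (ii) is formally identical, replacing $S$ by the representation $V$ and the affine space $H^0(\mathcal{F}^{\otimes\bullet})$ by $H^0(V(\mathcal{E},\mathcal{F}))$: the locus in $V$ where the discriminant (pulled back via $\pi$) vanishes to order $\geq 2$ is again of codimension $2$, so the same sieve applies and produces $\prod_{v\in|C|}(1-\beta_v)$. The point worth emphasizing is that the codimension-$2$ property — highlighted in the introduction as the reason the method works — is what licenses the independence of the local conditions and the convergence of the Poonen error term.

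For (iii), the strategy is to compute $1-\beta_v$ and $1-\alpha_v$ directly and take the ratio. I would use the cartesian square $V^{\mathrm{reg}}\to S$ and count $\mathcal{O}_{K_v}/\varpi_v^2$-points fiberwise. The key local input, recorded just before the statement, is that when $c\in S(\mathcal{O}_{K_v})$ is transversal the reduction $f_{\underline c}(x)$ is not a square in $k_v[x]$, so by the orbit analysis of Proposition \ref{regular locus in general case} (via \cite[Prop.~1.41]{Wan1}) one has $|V^{\mathrm{reg}}_{\underline c}(k_v)| = |G(k_v)|$ — the action of $G(\overline{k_v})$ is transitive and the stabilizer computation of Proposition \ref{stabilizer} gives the count on $k_v$-points. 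Summing over the transversal $\underline c$'s in $S(\mathcal{O}_{K_v}/\varpi_v^2)$ and dividing by the ambient cardinalities, the factor $|G(k_v)|$ comes out of the numerator of $1-\beta_v$ and the quotient against $1-\alpha_v$ collapses to $|G(k_v)|/|k_v|^{\dim G}$ — using $\dim V = \dim S + \dim G$ on $V^{\mathrm{reg}}$, since the generic fibers of $\pi$ are $G$-torsors. Taking the product over $v$ gives the claimed identity.

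The main obstacle I anticipate is not (iii) — that is a bookkeeping computation once the local orbit count is in hand — but making the uniformity in $\deg\mathcal{F}$ in (i) and (ii) genuinely rigorous: one must check that the Poonen/\cite{HLN14} sieve applies to sections of a \emph{nontrivial} vector bundle $V(\mathcal{E},\mathcal{F})$ twisted by $\mathcal{F}$, not just to spaces of polynomials, and that the error term coming from places $v$ of large degree is controlled uniformly. This is exactly where the codimension-$2$ hypothesis on the bad locus is essential, and I would isolate it as a lemma (the abstract statement that for a closed subscheme $Z\subset V$ of codimension $\geq 2$, the density of sections of $V(\mathcal{E},\mathcal{F})$ meeting $Z$ "to order $\geq 2$" at some $v$ is $1-\prod_v(1-\text{local density})$), then apply it to both $Z = \{\Delta \text{ singular}\}$ in $V$ and its image in $S$.
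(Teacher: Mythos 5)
Your proposal follows essentially the same route as the paper: parts (i) and (ii) are obtained by invoking the sieve of \cite{HLN14} (applicable precisely because $\Delta$ is squarefree and geometrically irreducible --- the content of Lemma~\ref{irreducibility of discriminant} --- which forces the non-smooth locus of $\{\Delta=0\}$ to have codimension $\geq 2$ in $S$ and hence $\alpha_v = O(|k_v|^{-2})$, so the product is positive), and part (iii) is proved exactly as you describe, via smoothness and surjectivity of $\pi:V^{\mathrm{reg}}\to S$ together with the local count $|V^{\mathrm{reg}}_{\bar s}(k_v)|=|G(k_v)|$ for transversal $s$ and the identity $\dim V=\dim S+\dim G$. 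There is no substantive difference in approach.
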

\begin{proof}
	To prove the existence of the limit in $i)$ and $ii),$ we use the result in $[\cite{HLN14}, 5.1.6]$ which is applicable for our case since the polynomial discriminant $\Delta$ is squarefree (see the below lemma \ref{irreducibility of discriminant}). Moreover, the fact that $\Delta$ is geometrically irreducible as a polynomial in $2n+1$ variables which are coordinates of $S$ provides a simple way to know why the limit in $i)$ is non-zero. In fact, if we denote by $\Delta^{smooth}(k_v)$ the smooth locus of the algebraic variety $\Delta(x)=0$ in $S(k_v),$ then \begin{equation}
	|k_v|^{4n+2}\alpha_v= \sum\limits_{\substack{ x \in \Delta^{smooth}(k_v) \\ \Delta(x)= 0}} |k_v|^{2n} + \sum\limits_{\substack{ x \in S(k_v)\setminus \Delta^{smooth}(k_v) \\ \Delta(x)= 0}} |k_v|^{2n+1}.
	\label{alpha}
	\end{equation}
	 Based on the hypothesis that $p>2(2n+2),$ for any fixed values of $a_2, \dots, a_{2n} \in k_v,$ the discriminant polynomial $\Delta(a_{2n+1},a_{2n+2})$ has the following form (see the proof of Lemma \ref{irreducibility of discriminant}):
	$$\Delta(a_{2n+1},a_{2n+2}) = b.a_{2n+1}^{2n+2}+ c.a_{2n+2}^{2n+1}+ g(a_{2n+1},a_{2n+2}),$$ where $b,c \in k_v^*.$ Since $(\Delta, \text{d}_{a_{2n+1}}(\Delta)) =1$ in $k_v(a_{2n+2})[a_{2n+1}],$ there exist $f_1, f_2 \in k_v[a_{2n+1},a_{2n+2}]$ and $f_3\in k_v[a_{2n+2}]^*$ such that
	$$f_1. \Delta + f_2. \text{d}_{a_{2n+1}}(\Delta) = f_3(a_{2n+2}). $$ Moreover, $f_3$ can be chosen such that its degree is O$(n^2).$ Hence, there is at most O$(n^2)$ values of $a_{2n+2}$ satisfying the condition that the set of roots of $$\Delta(a_{2n+1},a_{2n+2}) =  \text{d}_{a_{2n+1}}(\Delta)(a_{2n+1},a_{2n+2}) = 0$$ is non-empty. For each value of $a_{2n+2}$, there is at most $2n+2$ values in $k_v$ of $a_{2n+1}$ such that $\Delta(a_{2n+1},a_{2n+2})=0.$ To sum up, we have just proved that 
	$$|\Delta^{non-smooth}(k_v)|=\text{O}(n^3)|k_v|^{2n-1}.$$
	Substitute it into (\ref{alpha}); we obtain that 
	$$\alpha_v = \text{O}(1/|k_v|^2).$$ Thus, the limit in $i)$ is positive. 
	
	To prove $iii),$ we use the same method as in the proof of Proposition \ref{regular locus in general case} with the following notice: the projection $\pi: V^{\text{reg}} \rightarrow S$ is smooth and surjective; hence, the induced linear map on tangent spaces $\text{d}\pi_x: \text{T}_xV^{\text{reg}} \rightarrow \text{T}_xS$ is surjective. Additionally, for any transversal $s \in S(\mathcal{O}_{K_v}/(\varpi_v^2)),$ we have that $|V^{\text{reg}}_{\bar{s}}(k_v)|=|G(k_v)|,$ where $v\in |C|$ and $\bar{s} \in S(k_v)$ is the reduction of $s$. This helps to compute the local ratios and then deduces $iii)$.
\end{proof}
\begin{lemma} \label{irreducibility of discriminant}(The notation in this lemma is independent to the rest of the paper)
	Let $n>1$ be an integer and $A$ be a unique factorization domain of characteristic 0 or $p$ which is coprime to $n(n-1)$, for each tuple $(a_2, \dots , a_n) \in A^{n-1},$ we denote $\Delta(a_2, \dots, a_n )$ to be the discriminant of the polynomial $f(x) =x^n + a_2x^{n-2} + \cdots + a_n.$ Given any values of $a_2, \dots, a_{n-2} \in A$, the polynomial $\Delta(a_2,\dots,a_{n-1}, a_n) \in A[a_{n-1}, a_n]$ is irreducible in $K[a_{n-1},a_n],$ where $K=\text{Frac}(A)$ In particular, if we consider $\Delta$ as a polynomial with $n-1$ variables then it is irreducible in $K[a_2,\dots, a_n].$
\end{lemma}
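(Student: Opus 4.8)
The plan is to reduce the statement to an explicit description of the two ``extreme'' coefficients of $\Delta$ followed by a short weighted-grading argument, so I expect essentially all of the genuine work to be in that first step. Give $a_i$ the weight $i$: scaling the roots of $f$ by $t$ sends $a_i\mapsto t^{i}a_i$ and $\Delta\mapsto t^{n(n-1)}\Delta$, hence $\Delta$ is weighted-homogeneous of weighted degree $n(n-1)$ in $a_2,\dots,a_n$. Since $\gcd(n-1,n)=1$, among the monomials $a_{n-1}^{i}a_n^{j}$ the only ones with $(n-1)i+nj=n(n-1)$ are $a_{n-1}^{n}$ and $a_n^{n-1}$, every other monomial in $a_{n-1},a_n$ alone having strictly smaller weighted degree. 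Fixing $a_2,\dots,a_{n-2}$ at elements of $A$ therefore leaves the coefficients of $a_{n-1}^{n}$ and $a_n^{n-1}$ unchanged (those monomials already carry the full weight, so in $\Delta$ they are not multiplied by any monomial in $a_2,\dots,a_{n-2}$) and can only lower the weighted degrees of all remaining monomials, so
\[
P(a_{n-1},a_n):=\Delta(a_2,\dots,a_{n-2},a_{n-1},a_n)=b\,a_{n-1}^{n}+c\,a_n^{n-1}+g(a_{n-1},a_n),
\]
where every monomial of $g$ has weighted degree $<n(n-1)$ and $b,c$ are the coefficients of $a_{n-1}^{n},a_n^{n-1}$ in $\Delta$, independent of the fixed values. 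Specializing moreover $a_2=\dots=a_{n-2}=0$ reduces $f$ to the trinomial $x^{n}+a_{n-1}x+a_n$, whose discriminant is the classical $(-1)^{n(n-1)/2}\bigl((1-n)^{n-1}a_{n-1}^{n}+n^{n}a_n^{n-1}\bigr)$; thus $b=\pm(n-1)^{n-1}$ and $c=\pm n^{n}$, both nonzero in $K$ precisely because $\text{char}\,K$ is coprime to $n(n-1)$ (this is also what guarantees $f'$ has degree $n-1$, so that the trinomial formula is literally valid).

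Granting this normal form, irreducibility of $P$ in $K[a_{n-1},a_n]$ is formal. For the grading $\deg a_{n-1}=n-1$, $\deg a_n=n$, the top weighted-homogeneous part of $P$ is the binomial $b\,a_{n-1}^{n}+c\,a_n^{n-1}$, which is irreducible in $K[a_{n-1},a_n]$: over $K(a_n)$ it has the form $a_{n-1}^{n}-u$ with $u\in K(a_n)^{\times}$ of $a_n$-adic valuation $n-1$, and since $\gcd(n,n-1)=1$ the valuation rules out $u$ being a $p$-th power for a prime $p\mid n$ and $-4u$ being a fourth power when $4\mid n$; the standard criterion for $X^{n}-u$ then gives irreducibility over $K(a_n)$, and Gauss's lemma descends it to $K[a_{n-1},a_n]$. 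Now if $P=GH$ with $G,H$ non-units, then (as $P\neq 0$) both $G$ and $H$ are non-constant, so their top weighted-homogeneous parts are non-constant, and since $K[a_{n-1},a_n]$ is a domain the top part of $P$ is the product of the top parts; this exhibits $b\,a_{n-1}^{n}+c\,a_n^{n-1}$ as a product of two non-units, a contradiction. Hence $P$ is irreducible for every choice of $a_2,\dots,a_{n-2}\in A$.

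For the ``in particular'' assertion, note $\Delta$ is weighted-homogeneous of positive degree, so in any factorization $\Delta=GH$ both factors are weighted-homogeneous, and if non-units they have positive weighted degree. Restricting to $a_2=\dots=a_{n-2}=0$ factors the trinomial discriminant $(-1)^{n(n-1)/2}\bigl((1-n)^{n-1}a_{n-1}^{n}+n^{n}a_n^{n-1}\bigr)$, which we have just seen is irreducible, so one restriction is a nonzero constant; but a weighted-homogeneous polynomial of positive degree has no constant term and restricts either to $0$ or to something still carrying a positive-degree monomial, forcing that factor to have weighted degree $0$ — a contradiction. Thus $\Delta$ is irreducible in $K[a_2,\dots,a_n]$. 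The only real obstacle is the first step (the trinomial formula, the fact that no other monomial in $a_{n-1},a_n$ reaches weighted degree $n(n-1)$, and the invariance of $b,c$ under fixing $a_2,\dots,a_{n-2}$); everything after that is the two-line grading argument above. The degenerate case $n=2$, where $\Delta=-4a_2$ is linear, is trivially irreducible.
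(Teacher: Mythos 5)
Your proof is correct, and it shares the paper's central device --- weighting $a_{n-1}$ and $a_n$ by $n-1$ and $n$ so that $b\,a_{n-1}^{n}+c\,a_{n}^{n-1}$ is exactly the top-weight part of the specialized discriminant --- but the two arguments diverge at the final step. The paper never proves that binomial is irreducible: it notes instead that, apart from $a_{n-1}^{n}$ and $a_{n}^{n-1}$, all monomials $a_{n-1}^{i}a_{n}^{j}$ with $i\le n$, $j\le n-1$ have pairwise distinct weights, so each factor in a putative proper factorization would carry a \emph{unique} highest-weight monomial, whence the product would have a unique top-weight monomial, contradicting the presence of two in $\Delta_1$. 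You instead prove outright that the top-weight part $b\,a_{n-1}^{n}+c\,a_{n}^{n-1}$ is irreducible (via the $X^{n}-u$ criterion over $K(a_n)$, using that the $a_n$-adic valuation $n-1$ of $u$ is coprime to $n$, plus Gauss's lemma) and then invoke the standard fact that top graded parts multiply. Your route is slightly heavier, needing the Vahlen--Capelli criterion, but it is more self-contained: you also supply the two justifications the paper only asserts, namely the derivation of the leading-term normal form from the weighted homogeneity of $\Delta$ under root-scaling together with the trinomial discriminant (the paper simply states the shape of $\Delta$ as ``the critical point''), and the deduction of the ``in particular'' clause from the two-variable case via weighted homogeneity of the factors, which the paper leaves implicit. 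Both endgames are valid; the paper's is more elementary, yours is more standard and generalizes more readily.
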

\begin{proof}
	The critical point of the proof is that the discriminant polynomial has the following form:$$\Delta(a_2,\dots,a_n) = (-1)^{[\frac{n-1}{2}]}(n-1)^{n-1}a_{n-1}^n+ (-1)^{[\frac{n}{2}]}n^na_n^{n-1}+ g(a_2,\dots,a_n),$$where the degree of $a_{n-1}$ and $a_n$ in $g(a_2,\dots,a_n)$ are less than $n$ and $n-1$ respectively. Based on the condition of the characteristic of $A$, we see that the coefficients of $a_{n-1}^n$ and $a_n^{n-1}$ in $\Delta$ are non-zero. Thus, given fixed values of $a_2, \dots, a_{n-2} \in A$, we may rewrite $\Delta$ in the following simple form:
	$$\Delta_1(a_{n-1},a_n):= \Delta(a_2,\dots,a_n) = b.a_{n-1}^n+ c.a_n^{n-1}+ g(a_{n-1},a_n), $$where $b,c \neq 0,$ $\deg_{a_{n-1}}(g) < n,$ and $\deg_{a_n}(g)<n-1.$ Now we will show that $\Delta_1$ is irreducible in $K[a_{n-1},a_n].$ Firstly, if we consider $a_{n-1}$ and $a_n$ are of weights $n-1$ and $n$ respectively, then it is easy to see that in the above expression of $\Delta$, $b.a_{n-1}^n$ and $c.a_n^{n-1}$ are only highest weight monomials. Moreover, since $(n-1,n)=1,$ the weights of elements in the set $B=\{a_{n-1}^ia_n^j \,\,| \, 0 \leq i \leq n; 0 \leq j \leq n-1\}$ are all different except the case of  $a_{n-1}^n$ and $a_n^{n-1}$. Now assume that we have a proper factorization of $\Delta_1$ in $K[a_{n-1},a_n]$ (in fact, this is in $A[a_{n-1},a_n]$ since $A$ is a UFD):
$$\Delta_1 = g_1(a_{n-1},a_n). g_2(a_{n-1},a_n). $$It is easy to see that every monomial in $g_1$ and $g_2$ is in the set $B$, and $a_{n-1}^n$ and $a_n^{n-1}$ do not appear here. By taking the product of two unique highest weight monomials in $g_1$ and $g_2,$ we get the unique highest weight monomial in their product which is $\Delta_1.$ This is a contradiction since there are two highest monomials in $\Delta_1$.
\end{proof}
By using the above lemma for the case $A=\mathbb{F}_q[t]$ and $[\cite{Poo03}, \text{Theorem 3.4}],$ we proved that
\begin{corollary}
	\label{density of square free discriminant}
	Let $n>1$ be an integer. Then when monic polynomials $f(x) = x^{n} + a_1x^{n-1} + \dots + a_n \in A[x]$ are ordered by $H(f) = max\{|a_1|_t, |a_2|_t^{1/2}, \dots, |a_n|_t^{1/n}\},$ the density of monic polynomials having squarefree discriminant exists and is positive. 
\end{corollary}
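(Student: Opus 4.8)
The plan is to deduce the corollary from Lemma~\ref{irreducibility of discriminant} together with [\cite{Poo03}, Theorem 3.4], in three steps: first identify $\Delta$ as a squarefree (in fact geometrically irreducible) polynomial over $A=\mathbb{F}_q[t]$; then feed this into Poonen's theorem to obtain existence of the density as an Euler product; and finally bound the local factors to see that this product is nonzero. To set up the first step I would reduce to the depressed polynomials treated in the lemma. Since $p>4n$ forces $p\nmid n(n-1)$, the substitution $x\mapsto x-a_1/n$ turns $f(x)=x^n+a_1x^{n-1}+\dots+a_n$ into $\tilde f(y)=y^n+b_2y^{n-2}+\dots+b_n$, where each $b_i$ equals $a_i$ plus a polynomial in $a_1,\dots,a_{i-1}$ with coefficients in $\mathbb{F}_q$, and $\mathrm{disc}(f)=\mathrm{disc}(\tilde f)$. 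Thus $(a_1,b_2,\dots,b_n)$ is a polynomial coordinate system on $\mathbb{A}^n$ in which $\Delta$ is exactly the depressed discriminant $\tilde\Delta(b_2,\dots,b_n)$, which by Lemma~\ref{irreducibility of discriminant} is irreducible over $K=\mathbb{F}_q(t)$; hence $\Delta$ is irreducible, in particular squarefree, as a polynomial in $a_1,\dots,a_n$ over $A$. I would also record two by-products of the lemma's proof: the two top-weight monomials $b\,a_{n-1}^n$ and $c\,a_n^{n-1}$ have coefficients $b,c\in\mathbb{F}_q^{\times}$, so $\Delta$ is primitive over $A$; and since that weight argument runs over any field of the given characteristic, the hypersurface $\{\Delta=0\}\subset\mathbb{A}^n$ is geometrically irreducible with singular locus of codimension at least $2$ (this last point being the content of the resultant computation in the proof of Proposition~\ref{regular locus in transversal case}).

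Next I would rephrase the height ordering as counting $A$-points in a box: with $|a|_t=q^{\deg_t a}$ one has $H(f)\le q^d$ precisely when $\deg_t a_i\le id$ for every $i$, which cuts out an $\mathbb{F}_q$-subspace $B_d\subset A^n$ of dimension $\sum_{i=1}^n(id+1)$, all of whose side degrees tend to infinity with $d$. Because $\Delta$ is squarefree and primitive, [\cite{Poo03}, Theorem 3.4] applies and yields that the density of $\underline a\in B_d$ for which $\Delta(\underline a)$ is squarefree in $A$ exists as $d\to\infty$ and equals an Euler product $\prod_v(1-\delta_v)$, taken over closed points $v$ of $\mathrm{Spec}\,A$, where $\delta_v$ is the local density of the locus $\{\Delta\equiv 0 \pmod{\varpi_v^2}\}$. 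For positivity I would repeat the estimate from the proof of Proposition~\ref{regular locus in transversal case}: from the shape $\tilde\Delta=b\,a_{n-1}^n+c\,a_n^{n-1}+g$, the codimension-$2$ bound on the singular locus gives $\delta_v=\mathrm{O}(|k_v|^{-2})$, so $\sum_v\delta_v<\infty$ and $\prod_v(1-\delta_v)$, with each factor in $(0,1)$, converges to a positive number.

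The only genuinely substantive input is Lemma~\ref{irreducibility of discriminant}; once squarefreeness of $\Delta$ is available, existence of the density is a black-box application of Poonen's theorem and positivity is the convergence estimate already used for the transversal family. The point I expect to require care is that $H(f)$ weights the coefficient $a_i$ by $i$, so the boxes $B_d$ are not cubical; however, Poonen's sieve and its tail estimate only need the smallest side degree---here $\min_i id=d$---to grow, so the weighting is harmless. A minor additional check is that only the finite places of $\mathbb{F}_q(t)$ occur in the Euler product, which holds because squarefreeness in $A$ is a condition purely at the finite places.
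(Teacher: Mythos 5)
Your proposal is correct and follows exactly the route the paper takes: the corollary is deduced by combining Lemma~\ref{irreducibility of discriminant} (irreducibility, hence squarefreeness, of the discriminant polynomial over $A=\mathbb{F}_q[t]$) with Poonen's Theorem~3.4, with positivity coming from the same $\mathrm{O}(|k_v|^{-2})$ local-density estimate used in Proposition~\ref{regular locus in transversal case}. The extra details you supply --- the depression substitution $x\mapsto x-a_1/n$ (valid since $p>4n$) to reduce to the form treated in the lemma, primitivity of $\Delta$ over $A$, and the remark that the weighted, non-cubical boxes $\deg_t a_i\le id$ are harmless for Poonen's sieve --- are all points the paper leaves implicit, and you handle them correctly.
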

\begin{remark}
	\begin{itemize}
		\item[i)] If we assume the abc-conjecture then $[\cite{Poo03}, \text{Theorem 3.1}]$ gives a similar result as in the above corollary for the case $A=\mathbb{Z}.$ Later on, in \cite{bhargavasquarefree}, they proved that result without assuming abc-conjecture. 
		\item[ii)] The density in Corollary \ref{density of square free discriminant} is the product of local densities which can be computed precisely  by using a similar method as in \cite{localdensity}. 
	\end{itemize}
\end{remark}
\subsection{Density of semi-stable curves} \label{semi-stable section}
In this section, we consider the family of hyperelliptic curves over $K(C)$ whose minimal integral models are semi-stable (see [\cite{Liu06}, Definition 10.3.14]). Observe that a minimal integral model $\mathcal{H} \rightarrow C$ is semi-stable if and only if over any closed point $v \in |C|,$ the affine equation of $\mathcal{H}_v: y^2=f(x)$ satisfies that all roots over $\overline{k_v}$ of $f(x)$ are of order at most $2.$ Moreover, if a family $\alpha: C \rightarrow [S/\mathbb{G}_m]$ is semi-stable, it is already minimal. Now we define the semi-stable locus of the affine space $S$ as follows: for each element $\underline{a}=(a_2,\dots, a_{2n+2}) \in S(\overline{k}),$ it is said to be semi-stable if all roots of its associated polynomial $f_{\underline{a}}(x)=x^{2n+2}+a_2x^{2n}+ \dots + a_{2n+2}$ are of order at most $2.$ Similarly, an element $T\in V(\overline{k})$ is called to be semi-stable if its image under the projection map $\pi:V \rightarrow S$ is semi-stable. 
	\begin{proposition}
		\label{constructible}
		The non-semi-stable locus $S^{\text{non-ss}}$ is s constructible subset (i.e., a finite union of locally closed subsets) of $S$ of codimension at least 2. Moreover, for any finite extension $k \subset k_v,$ $S^{\text{non-ss}}(k_v) \subset S(k_v)$ is also of codimension at least 2. The same story for $V.$
	\end{proposition}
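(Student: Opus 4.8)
The plan is to identify $S^{\text{non-ss}}$ explicitly as the image of an affine space under an algebraic map, read off constructibility and the dimension bound from that description, and then transport the conclusion to $V$ through the invariant map $\pi$ using only facts already established in the excerpt.

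First I would pin down $S^{\text{non-ss}}$. A point $\underline a=(a_2,\dots,a_{2n+2})\in S(\overline k)$ is non-semi-stable exactly when $f_{\underline a}(x)$ has a root of multiplicity at least $3$, i.e. when $f_{\underline a}(x)=(x-t)^3 g(x)$ for some $t\in\overline k$ and some monic $g$ of degree $2n-1$; writing $g(x)=x^{2n-1}+b_1x^{2n-2}+\dots+b_{2n-1}$, the vanishing of the $x^{2n+1}$-coefficient of $f_{\underline a}$ forces $b_1=3t$ (note $3\neq 0$ since $p>4n$). Hence $S^{\text{non-ss}}$ is precisely the image of the morphism $\phi\colon W\to S$, $(t,g)\mapsto (x-t)^3g(x)$, where $W\subset\mathbb A^1\times\mathbb A^{2n-1}$ is the subvariety $\{b_1=3t\}\cong\mathbb A^{2n-1}$. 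By Chevalley's theorem $S^{\text{non-ss}}=\phi(W)$ is constructible, and $\dim S^{\text{non-ss}}\le\dim W=2n-1=\dim S-2$, so it is of codimension at least $2$ in $S$. Since $\phi$ is defined over the prime field, I would deduce the $k_v$-statement uniformly: stratifying the constructible set $S^{\text{non-ss}}$ into finitely many irreducible locally closed pieces of dimension $\le 2n-1$ and invoking Noether normalization (or Lang--Weil) on each yields a constant $C$ independent of $v$ with $|S^{\text{non-ss}}(k_v)|\le C\,|k_v|^{2n-1}$, which is the meaning of ``codimension at least $2$'' for $k_v$-points.

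Next I would pass to $V$. A point of $V$ is non-semi-stable iff its $\pi$-image is, so $V^{\text{non-ss}}=\pi^{-1}(S^{\text{non-ss}})$ is constructible. To bound its dimension I split $V^{\text{non-ss}}=(V^{\text{non-ss}}\cap V^{\text{reg}})\sqcup(V^{\text{non-ss}}\setminus V^{\text{reg}})$: the second piece lies in $V\setminus V^{\text{reg}}$, which is of codimension at least $2$ in $V$ as recalled in the introduction, while for the first piece I use that $\pi|_{V^{\text{reg}}}\colon V^{\text{reg}}\to S$ is smooth and surjective (already invoked in the proof of Proposition~\ref{regular locus in transversal case}), hence flat with fibres of dimension $\dim V-\dim S$, so $(\pi|_{V^{\text{reg}}})^{-1}(S^{\text{non-ss}})$ has codimension in $V^{\text{reg}}$ equal to $\operatorname{codim}_S S^{\text{non-ss}}\ge 2$; as $V^{\text{reg}}$ is open and dense in $V$ this survives in $V$. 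A union of two constructible sets of codimension $\ge 2$ again has codimension $\ge 2$, and the uniform $k_v$-bound for $V^{\text{non-ss}}$ then follows exactly as for $S$ in the previous step.

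The main obstacle is bookkeeping rather than depth. The single load-bearing computation is that the parametrizing space $W$ is cut out by the \emph{one} linear condition $b_1=3t$ inside $\mathbb A^1\times\mathbb A^{2n-1}$, so that $\dim W=\dim S-2$ and not merely $\dim S-1$; obtaining codimension exactly $2$ is the whole content of the statement. The second point I would be careful about is the transfer to $V$: rather than appealing to global flatness of $\pi\colon V\to S$, I restrict to $V^{\text{reg}}$, where smoothness of $\pi$ is already available, and absorb the complement into the known codimension-$2$ estimate for $V\setminus V^{\text{reg}}$, which keeps the argument self-contained. Finally, reading ``codimension $\ge 2$ over $k_v$'' as a point-count estimate with a constant uniform in the place $v$ is handled simply by working throughout with models defined over the prime field.
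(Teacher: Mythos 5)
Your proof is correct, and it overlaps with the paper's only in part. For constructibility you do exactly what the paper does: exhibit $S^{\text{non-ss}}$ as the image of an affine space of dimension $2n-1$ (the paper's map $\phi$ builds the same linear constraint from the vanishing of the $x^{2n+1}$-coefficient directly into its parametrization) and invoke Chevalley. For the codimension bound, however, you diverge: you read the bound off the dimension of the parametrizing space $W\cong\mathbb{A}^{2n-1}$ versus $\dim S=2n+1$, whereas the paper instead embeds $S^{\text{non-ss}}$ into the locus $\Delta(f)=\Delta(f')=0$ and asserts that these two discriminants are relatively prime. Your route is more self-contained (it needs no coprimality claim, only the count of parameters, with the single load-bearing point being the one linear relation $b_1=3t$), at the cost of not identifying a concrete pair of defining equations. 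You also supply two things the paper compresses into ``the same argument'' and ``the same story'': the uniform-in-$v$ point count $|S^{\text{non-ss}}(k_v)|\le C|k_v|^{2n-1}$ via Lang--Weil on a model over the prime field, which is what the density computations downstream actually consume, and the transfer to $V$ by splitting $V^{\text{non-ss}}=\pi^{-1}(S^{\text{non-ss}})$ along $V^{\text{reg}}$, using smoothness of $\pi|_{V^{\text{reg}}}$ (already invoked in the proof of Proposition~\ref{regular locus in transversal case}) on one piece and the known codimension-$2$ bound for the non-regular locus on the other. Both of these additions are sound and make the statement usable as stated.
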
 
\begin{proof}
	The first statement is a direct consequence of the Chevalley's theorem which states that the image of a constructible set via a finite type morphism of schemes is constructible. In our situation, the non-semi-stable locus $S^{\text{non-ss}}$ is the image of the following polynomial map:
	\begin{align*}
			\phi: \mathbb{A}^{2n-1}(\overline{k}) &\rightarrow  S(\overline{k}) \\
			(c_1,c_2,\dots,c_{2n-1}) & \mapsto (f_2,f_3,\dots,f_{2n+2}),
\end{align*}
where for each $2\leq i \leq 2n+2,$ $f_i$ is the coefficient of $x^{2n+2-i}$ in the following product
$$(x-c_1)^3(x^{2n-1}-c_1x^{2n-2}+c_2x^{2n-3} + \dots + c_{2n-1}).$$ Hence, $S^{\text{non-ss}}$ is constructible. 

By setting $f(x)=x^{2n+2}+a_2x^{2n}+ \dots+ a_{2n+2},$ where $a_i$ are coordinates of $S$, it is easy to see that $S^{\text{non-ss}}$ is the subset of $\Delta(f(x))=\Delta(f'(x))=0.$ Since $\Delta(f)$ and $\Delta(f')$ are relatively prime in $\overline{k}[a_2,\dots,a_{2n+2}],$ the codimension of $S^{\text{non-ss}}$ in $S$ is at least 2. The same argument is applied to $S^{\text{non-ss}}(k_v) \subset S(k_v).$ 
\end{proof}
In this subsection, we consider the family $\mathcal{A}^{\text{ss}}$ of semi-stable integral models $\mathcal{H} \rightarrow C$ such that every fiber is irreducible. It can be shown that a fiber $\mathcal{H}_v: y^2=f(x)$ is irreducible if and only if $f(x)$ is not a square in $k_v[x].$ This is equivalent to that $f(x)$ is not a square in $\overline{k_v}[x]$ because any irreducible factors of $f(x)$ over $k_v[x]$ are separable (since we assume that char$(k) > 2n+2$). We denote by $S^{\square} \subset S$ the subset of elements $\underline{a}$ such that the associated polynomial $f_{\underline{a}}(x)$ is a square in $\overline{k}[x].$ By using a similar argument as in Proposition (\ref{constructible}), we can show that the "square" locus $S^{\square}$ is constructible and of codimension $2$ in $S$. We define $V^{\square}$ similarly and could prove the same statement. Since $V^{\text{non-ss}}\cup V^{\square}$ is the preimage of a constructible set via the $G-$equivariant map $\pi: V \rightarrow S,$ we may express it as a disjoint union of $G-$stable quasi-affine subsets. The quasi-affine property makes sure that the quotient $(V^{\text{non-ss}}\cup V^{\square})\times^G \mathcal{E},$ for any $G-$bundle $\mathcal{E},$ exists as a constructible subset of the vector bundle $V \times^{G} \mathcal{E}.$ The density of $\mathcal{A}^{\text{ss}}$ can be computed as follows:
\begin{proposition} \label{regular locus in semi-stable case}
	Let $\mathcal{F}$ runs over the family of line bundles over $C$. Then for any $G-$bundle $\mathcal{E}$ over $C,$ we have the following limits:
	\begin{itemize}
		\item [i)] \begin{equation}
		\label{density of regular semistable locus}
		\lim_{\text{deg}(\mathcal{F}) \rightarrow \infty} \frac{|\{s \in H^0( V^{\text{reg}}(\mathcal{E},\mathcal{F}) | s \, \text{avoids} \,  (V^{\text{non-ss}}\cup V^{\square})(\mathcal{E}, \mathcal{F})\}|}{|H^0(V(\mathcal{E}, \mathcal{F}))|} = \prod_{v \in |C|} (1-\frac{c_v}{|k_v|^{\text{dim}(V)}}),
		\end{equation}
		where $ (V^{\text{non-ss}}\cup V^{\square})(\mathcal{E}, \mathcal{F}) = ( (V^{\text{non-ss}}\cup V^{\square}) \times^G \mathcal{E}) \otimes \mathcal{F}$ and $c_v= |(V^{\text{non-reg}} \cup V^{\text{non-ss}} \cup V^{\square})(k_v)|.$
		\item [ii)] 
		\begin{equation*}
			\lim_{\text{deg}(\mathcal{F})\rightarrow \infty} \frac{|\{s \in H^0\big(C, (S^{\text{ss}} \times^{\mathbb{G}_m} \mathcal{F})\big) \,|\, s(v) \,\text{is not a square $\forall v \in C$}   \}|}{|H^0(C, S \times^{\mathbb{G}_m} \mathcal{F})|} = \prod_{v \in |C|}(1-\frac{d_v}{|k_v|^{\text{dim}(S)}}),
		\end{equation*}
		where $d_v = |(S^{\text{non-ss}}\cap S^{\square})(k_v)|$ for any $v \in |C|.$
		\item[iii)] $$\frac{\prod_{v \in |C|} (1-\frac{c_v}{|k_v|^{\text{dim}(V)}})}{\prod_{v \in |C|}(1-\frac{d_v}{|k_v|^{\text{dim}(S)}})} = \prod_{v \in |C|}\frac{|G(k_v)|}{|k_v|^{\text{dim}(G)}}$$
	\end{itemize}
\end{proposition}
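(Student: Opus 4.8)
The plan is to obtain parts $i)$ and $ii)$ from the function field geometric sieve of \cite{HLN14} (the same statement $[\cite{HLN14}, 5.1.6]$ already used in the transversal case), and then to deduce $iii)$ by comparing the two resulting products of local densities, the comparison being carried out one place at a time with the help of the local orbit count from the proof of Proposition \ref{regular locus in general case}.

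For $i)$: a regular section $s\in H^0(V^{\text{reg}}(\mathcal{E},\mathcal{F}))$ avoids $(V^{\text{non-ss}}\cup V^{\square})(\mathcal{E},\mathcal{F})$ if and only if, regarded as a section of the full bundle $V(\mathcal{E},\mathcal{F})$, it avoids the bad locus $W_V:=V^{\text{non-reg}}\cup V^{\text{non-ss}}\cup V^{\square}$. By Proposition \ref{constructible} and the codimension statements recorded just before it (the non-regular locus has codimension $2$, and $S^{\square}$, hence also $V^{\square}$, is constructible of codimension $\geq 2$), $W_V$ is constructible of codimension $\geq 2$ in $V$; writing it as a finite disjoint union of $G$-stable quasi-affine locally closed pieces, exactly as in the paragraph preceding the Proposition, makes $W_V(\mathcal{E},\mathcal{F})\subset V(\mathcal{E},\mathcal{F})$ a well-defined constructible subset for every $G$-bundle $\mathcal{E}$ and line bundle $\mathcal{F}$. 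The sieve then shows that the proportion of sections of $V(\mathcal{E},\mathcal{F})$ avoiding $W_V(\mathcal{E},\mathcal{F})$ converges, as $\deg\mathcal{F}\to\infty$, to $\prod_{v\in|C|}(1-|W_V(k_v)|/|k_v|^{\dim V})$, and $|W_V(k_v)|=|(V^{\text{non-reg}}\cup V^{\text{non-ss}}\cup V^{\square})(k_v)|=c_v$. Part $ii)$ is entirely parallel, with $(V,G)$ replaced by $(S,\mathbb{G}_m)$ and the bad locus $W_S$ the locus of $\underline a\in S$ whose associated polynomial is non-semi-stable or a square (again constructible of codimension $\geq 2$ by Proposition \ref{constructible} and its analogue for $S^{\square}$): a section of $S\times^{\mathbb{G}_m}\mathcal{F}$ lying in $S^{\text{ss}}$ and nowhere a square is exactly one avoiding $W_S(\mathcal{F})$, so its density is $\prod_v(1-d_v/|k_v|^{\dim S})$ with $d_v=|W_S(k_v)|$.

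For $iii)$: the two limits exist by $i)$ and $ii)$, so it suffices to verify the identity factor by factor, i.e. that $|k_v|^{\dim V}-c_v=|G(k_v)|\cdot(|k_v|^{\dim S}-d_v)$; dividing by $|k_v|^{\dim V}$ and using $\dim V=\dim S+\dim G$ (the fibres of the smooth surjection $\pi\colon V^{\text{reg}}\to S$ are single $\text{PO}(U)$-orbits, of dimension $\dim G$) then yields $\frac{1-c_v/|k_v|^{\dim V}}{1-d_v/|k_v|^{\dim S}}=\frac{|G(k_v)|}{|k_v|^{\dim G}}$. Now $|k_v|^{\dim V}-c_v=|(V^{\text{reg}}\setminus(V^{\text{non-ss}}\cup V^{\square}))(k_v)|$; since semi-stability and being a square depend only on the image in $S$, this set is the disjoint union of the fibres $V^{\text{reg}}_{\underline c}(k_v)$ over $\underline c\in(S\setminus W_S)(k_v)$, a set of size $|k_v|^{\dim S}-d_v$. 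For each such $\underline c$ the polynomial $f_{\underline c}$ is not a square in $k_v[x]$, so by the computation in the proof of Proposition \ref{regular locus in general case} (transitivity of $G(\overline{k_v})$ on $V^{\text{reg}}_{\underline c}(\overline{k_v})$ once $f_{\underline c}$ is a non-square, and the point count of Proposition 3.22 in \cite{DVT17}, where connectedness of $G$ means there is no $H^1$-obstruction over the finite field $k_v$) one has $|V^{\text{reg}}_{\underline c}(k_v)|=|G(k_v)|$. Summing over $\underline c$ gives the desired identity.

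The routine verifications are the codimension-$\geq 2$ bounds for $V^{\square}$ and $S^{\square}$ (the same resultant argument as in Proposition \ref{constructible}, since $f=g^2$ imposes roughly $n$ conditions on the coefficients) and the quasi-affine decompositions of $W_V$ and $W_S$. The one genuinely delicate point — and the reason the square loci $V^{\square}$, $S^{\square}$, rather than only $V^{\text{non-ss}}$, $S^{\text{non-ss}}$, must be included in the bad locus — is the exactness of the count $|V^{\text{reg}}_{\underline c}(k_v)|=|G(k_v)|$: it holds precisely when $f_{\underline c}$ is a non-square, where a root of odd multiplicity provides an element of $\text{O}(U)\setminus\text{SO}(U)$ fusing the two geometric $\text{SO}(U)$-orbits into one $G(\overline{k_v})$-orbit, whereas over the square locus the corresponding fibre carries a genuine correction term governed by $H^1(k_v,\text{Stab})$. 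This is also why the same $c_v$ and $d_v$ reappear, as they must, in the counting of sections performed later in the paper.
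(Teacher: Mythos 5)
Your proposal is correct and follows essentially the same route as the paper: parts i) and ii) via the \cite{HLN14} sieve applied to the constructible, codimension-$\geq 2$ bad loci (decomposed into $G$-stable quasi-affine locally closed pieces exactly as the paper does just before the proposition), and part iii) via the place-by-place identity coming from $|V^{\text{reg}}_{\underline c}(k_v)|=|G(k_v)|$ for non-square semi-stable $\underline c$. The only point of divergence is that you implicitly read $d_v$ as $|(S^{\text{non-ss}}\cup S^{\square})(k_v)|$ rather than the intersection printed in the statement --- which is indeed the reading that part iii) and the later application require.
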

\begin{proof}
	As in the proof of Proposition \ref{regular locus in general case}, we applied [\cite{HLN14}, Proposition 5.1.1] to calculate the density of regular sections. Precisely, [\cite{HLN14}, Proposition 5.1.1] allows us to compute the density of global sections of a vector bundle that avoid a (locally closed) subscheme of codimension 2. In our current situation, we may prove a similar version of [\cite{HLN14}, Proposition 5.1.1] in which instead of avoiding a locally closed set, our sections avoid a constructible set. In fact, the key point of the proof of [\cite{HLN14}, Proposition 5.1.1] is [\cite{HLN14}, Lemma 5.1.5], and it is easy to generalize [\cite{HLN14}, Lemma 5.1.5] to our case. Hence, $i)$ and $ii)$ are deduced, and note that both limits are non-zero due to the previous proposition.
	
	The result in $iii)$ can be shown similarly as in Proposition \ref{regular locus in general case} by noting that for any $\underline{c} \in S^{\text{ss}}(k_v) \setminus S^{\square}(k_v),$ we have that $V^{\text{reg}}_{\underline{c}}(k_v) = |G(k_v)|.$ 
\end{proof}

\subsection{Canonical reduction of \texorpdfstring{$G-$}{Lg}bundle}
\label{canonical reduction theory}
We have the following isomorphism of algebraic groups over $k$:
$$G=\text{PSO}(U) \cong \text{GSO}(U)/\mathbb{G}_m,$$where $\mathbb{G}_m$ denotes the center of $\text{GSO}(U).$ Immediately, a $G-$bundle could be considered as a $\text{GSO}(U)/\mathbb{G}_m-$bundle. Moreover, since $H^2(C, \mathbb{G}_m)=0$, every $\text{GSO}(U)/\mathbb{G}_m-$bundle over $C$ can be lifted to a $\text{GSO}(U)-$bundle which is well-defined up to tensor twist by a line bundle. To study $\text{GSO}(U)-$bundle, let recall the notation of a line bundle-value bilinear space over $C$.
\begin{definition}
Let $\mathcal{L}$ be a line bundle on $C$, then an $\mathcal{L}-$valued bilinear form on $C$ will mean a triple $(\mathcal{E}, q_{\mathcal{E}}, \mathcal{L}),$ where $\mathcal{E}$ is a vector bundle over $C$ and $q_{\mathcal{E}}: \mathcal{E} \times \mathcal{E} \rightarrow \mathcal{L}$ is an $\mathcal{O}_C-$bilinear morphism. This bilinear form is called symmetric if $q_{\mathcal{E}}: \mathcal{E} \times \mathcal{E} \rightarrow \mathcal{L}$ is symmetric.
 \end{definition}
Now we can define a $\text{GSO}(U)-$bundle as a datum $((\mathcal{E}, q_{\mathcal{E}}, \mathcal{L}), \eta_{\mathcal{E}})$ consisting of an $\mathcal{L}-$valued symmetric bilinear space of rank $n$ (for some line bundle $\mathcal{L}$ on $C$) and an orientation isomorphism $$\eta: \text{disc}(\mathcal{E}, q_{\mathcal{E}}, \mathcal{L}) := \mathcal{E} \otimes (\mathcal{L}^{\vee})^{\otimes n+1} \rightarrow \mathcal{O}_C.$$
By the canonical reduction theory of principal bundles (see \cite{BH04} for definitions and properties), for each $\text{GSO}(U)-$bundle $\mathcal{E}$, there exists uniquely a parabolic subgroup $P \subset \text{GSO}(U)$ with Levi quotient $L$ and the associated $P-$bundle $\mathcal{E}_P$ such that 
\begin{enumerate}
\item We have an isomorphism $\mathcal{E} \cong \mathcal{E}_P(\text{GSO}(U)),$ where $\mathcal{E}_P(\text{GSO}(U))$ is the quotient $(\mathcal{E}_P \times \text{GSO}(U)) / P$ with the following action of $P$ on $\mathcal{E}_P \times \text{GSO}(U):$ for any $h \in P, e \in \mathcal{E}_P,$ and $g \in \text{GSO(U)}$ then $h.(e,g) = (h.e, h^{-1}g).$
\item The Levi bundle $\mathcal{E}_L$ associated, by extension of structure group, with $\mathcal{E}_P$ for the projection $P \rightarrow L$ is semi-stable.

\item For every non-trivial character $\chi$ of $P$ which is a non-negative linear combination of simple roots with respect to some Borel subgroup contained in $P$, the line bundle $\chi_*\mathcal{E}_P$ on $C$ has a positive degree.
\end{enumerate}
Assume that the Levi subgroup $L$ is isomorphic to $$\text{GL}_{n_1} \times \text{GL}_{n_2} \times \cdots \times \text{GL}_{n_t} \times \text{GSO}_{2h},$$ where $n_i >0,$ and $h \geq 0$ are integers satisfying that $2\sum_{i=1}^t n_i + 2h =n.$ In other words, there exists a flag of isotropic subspaces 
$$0 = V_0 \subset V_1 \subset \cdots \subset V_{t} \subset V_{t}^* \subset \cdots \subset V_1^* \subset U,$$
where $\text{dim}(V_i/V_{i-1})=n_i$ for $1\leq i \leq t$, and $\text{dim}(V_{t}^*/V_t)= 2h$. From this, we get a filtration of the vector bundle $\mathcal{E} \times^{\text{GSO}(U)} U$: 
\begin{equation} \label{canonical reduction}0=\mathcal{E}_P \times^P V_0 \subset \mathcal{E}_P \times^P V_1 \subset \cdots \subset \mathcal{E}_P \times^P V_{t} \subset \mathcal{E}_P \times^P V_t^* \subset \cdots \subset \mathcal{E}_P \times^P V_1^*  \end{equation}
satisfying that the quotient bundles $X_i = \mathcal{E}_P \times^P V_i / \mathcal{E}_P \times^P V_{i-1}$ for $1 \leq i \leq t$ and $X_{t+1} = (\mathcal{E}_P \times^P V_t^*) / (\mathcal{E}_P \times^P V_t)$ are semistable. Moreover, $(\mathcal{E}_P \times^P V_{i-1}^*) / (\mathcal{E}_P \times^P V_{i}^*)$ is isomorphic to $X^{\vee}_i \otimes \mathcal{L}$. Remark that $X_{t+1} \cong X_{t+1}^{\vee} \otimes \mathcal{L}$. If we denote the slope of vector bundle $X_i$ by $\mu_i$, then the "canonical conditions" imply that: 
\begin{align}
\mu_1 > \mu_2 > \cdots > \mu_t > \mu_{t+1} = d/2 & \,\,\text{if} \, h>0, \label{slope condition when h>0}\\
\mu_1 > \mu_2 > \cdots > \mu_t \hspace{0.3cm} \text{and} \,\, \mu_{t-1}+\mu_t > d & \,\, \text{if} \, h=0. \label{slope condition when h=0}
\end{align}
We now can estimate the size of the automorphism group of a $\text{GSO}(U)-$bundle as follows: 
\begin{proposition}
There exists a constant $c$ that only depends on the genus $g$ of the curve $C$, such that for any $\text{GSO}(U)-$bundle $(\mathcal{E},q_{\mathcal{E}}, \mathcal{L})$ with the canonical reduction to $P$, we have: 
\begin{itemize}
\item[i)] If $h>0$ then 
\begin{multline} \label{bound of aut group}
|\text{Aut}(E)(\mathbb{F}_q)| \geq  c. |\text{Aut}_{\text{GSO}(2h)}(X_{t+1})|.\prod_{i=1}^t \big(|\text{Aut}_{\text{GL}(n_i)}(X_i)|. |H^0((\wedge^2 X_i)\otimes \mathcal{L}^{\vee})| \big) . \\  \prod_{i=1}^{t-1} \prod_{j=i+1}^{t} \big(|H^0(X_i \otimes X_j \otimes \mathcal{L}^{\vee})| . |H^0(X_i \otimes X_j^{\vee})|\big).\prod_{i=1}^t |H^0(X_i \otimes X_{t+1}^{\vee})|.
\end{multline}
\item[ii)] If $h=0$ and $2\mu_t > d$, where $d = \text{deg}(\mathcal{L}),$ then 
\begin{align*}
|\text{Aut}(E)(\mathbb{F}_q)| \geq  c. \prod_{i=1}^t \big(|\text{Aut}_{\text{GL}(n_i)}(X_i)|. |H^0((\wedge^2 X_i)\otimes \mathcal{L}^{\vee})| \big) . \\  \prod_{i=1}^{t-1} \prod_{j=i+1}^{t} \big(|H^0(X_i \otimes X_j \otimes \mathcal{L}^{\vee})| . |H^0(X_i \otimes X_j^{\vee})|\big).
\end{align*}
\item[iii)] If $h=0$ and $2\mu_t \leq d$, where $d = \text{deg}(\mathcal{L}),$ then 
\begin{align*}
|\text{Aut}(E)(\mathbb{F}_q)| \geq  c. \prod_{i=1}^{t} \big(|\text{Aut}_{\text{GL}(n_i)}(X_i)| \big). \prod_{i=1}^{t-1}\big( |H^0((\wedge^2 X_i)\otimes \mathcal{L}^{\vee})| \big).  \\  |H^0((\wedge^2 X_t^{\vee}) \otimes \mathcal{L})|.\prod_{i=1}^{t-1} \prod_{j=i+1}^{t} \big(|H^0(X_i \otimes X_j \otimes \mathcal{L}^{\vee})| . |H^0(X_i \otimes X_j^{\vee})|\big).
\end{align*}
\end{itemize}
Here for any vector bundle $F$, $H^0(F)$ is the notation of $H^0(C,F)-$the set of global sections over $C$ of $F$.
\end{proposition}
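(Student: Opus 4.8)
The plan is to exploit that the canonical reduction $\mathcal{E}_P$ is \emph{canonical} (cf. \cite{BH04}): every automorphism of $(\mathcal{E},q_\mathcal{E},\mathcal{L})$ must preserve it, so $\text{Aut}(\mathcal{E})=\text{Aut}(\mathcal{E}_P)$ as affine group schemes over $k$. Since $P$ is the semidirect product of its Levi $L$ and unipotent radical $R_u(P)$, pushing bundles forward along the quotient $P\rightarrow L$ produces an exact sequence of group schemes over $k$
$$1\longrightarrow N\longrightarrow \text{Aut}(\mathcal{E}_P)\longrightarrow \text{Aut}_L(\mathcal{E}_L),$$
where $N=\Gamma\big(C,\,\mathcal{E}_P\times^P R_u(P)\big)$ and $\mathcal{E}_L$ is the Levi bundle. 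First I would bound $|N(\mathbb{F}_q)|$ from below; then bound from below the image of $\text{Aut}(\mathcal{E}_P)(\mathbb{F}_q)$ in $\text{Aut}_L(\mathcal{E}_L)(\mathbb{F}_q)$; finally multiply. All the slack gets absorbed into the constant $c$.

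For $|N(\mathbb{F}_q)|$: the lower central series of the split unipotent group $R_u(P)$ presents $\mathcal{E}_P\times^P R_u(P)$ as an iterated central extension of vector bundles whose graded pieces are the associated bundles $\mathcal{E}_L\times^L\mathfrak{g}_\alpha$ of the root spaces $\mathfrak{g}_\alpha\subset R_u(P)$; running the long exact cohomology sequences (and using $H^1(\mathbb{F}_q,\mathbb{G}_a)=0$) gives $|N(\mathbb{F}_q)|\ \ge\ \prod_\alpha|H^0(C,\mathcal{E}_L\times^L\mathfrak{g}_\alpha)|\,/\,\prod_\alpha|H^1(C,\mathcal{E}_L\times^L\mathfrak{g}_\alpha)|$. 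The real work is to identify these pieces. Using the matrix model of $\text{GSO}(U)$ attached to the isotropic flag $0\subset V_1\subset\cdots\subset V_t\subset V_t^*\subset\cdots\subset V_1^*\subset U$, together with the fact (noted in the construction of $(\ref{canonical reduction})$) that the $\mathcal{L}$-valued form makes the $i$-th ``dual'' graded piece of $\mathcal{E}_P\times^P U$ equal to $X_i^\vee\otimes\mathcal{L}$, the self-adjointness relation $T+T^*=0$ defining $\mathfrak{so}$ inside $\mathfrak{gl}$ identifies the nilradical pieces as $X_i\otimes X_j^\vee$ and $X_i\otimes X_j\otimes\mathcal{L}^\vee$ for $i<j$ (the two ``levels'' of off-diagonal blocks), the corner pieces $(\wedge^2 X_i)\otimes\mathcal{L}^\vee$, and — when $h>0$ — the pieces $X_i\otimes X_{t+1}^\vee$ coming from the middle $\text{GSO}(2h)$-block; correspondingly $\text{Aut}_L(\mathcal{E}_L)=\prod_i\text{Aut}_{\text{GL}(n_i)}(X_i)\times\text{Aut}_{\text{GSO}(2h)}(X_{t+1})$ when $h>0$, and $=\prod_i\text{Aut}_{\text{GL}(n_i)}(X_i)\times\mathbb{G}_m$ when $h=0$ (the homothety factor, which we simply discard, only weakening the bound). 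This already reproduces the lists of factors in the three cases, and the trichotomy itself is forced by this bookkeeping: if $h=0$ the flag ends at a Lagrangian, and at the resulting type-$D$ fork the canonical reduction must choose one of the two opposite parabolics so as to satisfy the positivity condition (3) — i.e. according to the sign of $2\mu_t-d$. This is exactly why the last corner piece is $(\wedge^2 X_t)\otimes\mathcal{L}^\vee$ (of slope $2\mu_t-d>0$) in case ii), and $(\wedge^2 X_t^\vee)\otimes\mathcal{L}$ (of slope $d-2\mu_t\ge 0$) in case iii).

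For the image in $\text{Aut}_L(\mathcal{E}_L)(\mathbb{F}_q)$: the obstruction to lifting lies in the (nonabelian) $H^1$ of $\mathcal{E}_P\times^P R_u(P)$, so the image has index at most $\prod_\alpha|H^1(C,\mathcal{E}_L\times^L\mathfrak{g}_\alpha)|$. It remains to bound each $|H^1(C,\mathcal{E}_L\times^L\mathfrak{g}_\alpha)|$ by a constant. Condition (2) forces each $X_i$ and $X_{t+1}$ to be semistable; since $p>4n$, tensor products of semistable bundles stay semistable (hence so do $(-)^\vee$ and $\wedge^2$), so every $\mathcal{E}_L\times^L\mathfrak{g}_\alpha$ is semistable, and the slope inequalities $(\ref{slope condition when h>0})$ and $(\ref{slope condition when h=0})$ show its slope is $\ge 0$; a Clifford-type estimate then gives $h^1(C,\mathcal{E}_L\times^L\mathfrak{g}_\alpha)\le g\cdot\text{rk}(\mathcal{E}_L\times^L\mathfrak{g}_\alpha)$. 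As there are only finitely many root spaces, each of bounded rank, the product of all these $H^1$'s — which controls the total slack in both steps — is at most a constant independent of $\mathcal{E}$. Combining this with the equality (resp. constant-index inclusion, for $h=0$) relating $\text{Aut}_L(\mathcal{E}_L)$ to $\prod_i\text{Aut}_{\text{GL}(n_i)}(X_i)\times\text{Aut}_{\text{GSO}(2h)}(X_{t+1})$ yields the three inequalities, with $c$ the reciprocal of that constant.

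The step I expect to be the main obstacle is the explicit root-space bookkeeping for $\mathfrak{gso}(U)$ relative to the isotropic flag: keeping every $\mathcal{L}$-twist straight, and above all correctly treating the type-$D$ fork when $h=0$, where the two opposite parabolics compete and one must verify — against the positivity condition (3) — that the canonical reduction selects precisely the one matching the sign of $2\mu_t-d$. That verification is the genuine source of the split into cases ii) and iii), and must be cross-checked against the explicit degrees of the graded pieces.
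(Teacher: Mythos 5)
Your proposal is correct and follows essentially the same route as the paper's proof, which simply defers to the earlier odd-hyperelliptic case ([DVT17, Prop.\ 2.30]): automorphisms preserve the canonical reduction, the unipotent radical is filtered by the root-space bundles you list, and semistability together with a Clifford/Serre-duality estimate bounds all the $H^0\!/H^1$-defects by a constant depending only on $g$. In particular, your discussion of the type-$D$ fork when $h=0$ is exactly the content of the paper's one-line remark that case iii) is deduced from case ii) by interchanging $X_t$ and $X_t^{\vee}\otimes\mathcal{L}$.
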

\begin{proof}
The proof is quite similar to the proof of [\cite{DVT17}, Proposition 2.30]. The only thing we should notice is that the statement $iii)$ can be deduced from $ii)$ by interchanging $X_t$ and $X_{t}^{\vee} \otimes \mathcal{L}.$   
\end{proof}
\section{Counting global sections} \label{section 4}
Recall that at the end of Section \ref{interpretation}, we give an interpretation of $k-$points of the moduli space $\mathcal{M}$ via $G-$bundles and their sections. Precisely, for any line bundle $\mathcal{F},$ the fiber $\mathcal{M}_{\mathcal{F}}(k)$ is parametrized by pairs $(\mathcal{E}, s)$, where $\mathcal{E}$ is a $G-$bundle and $s$ is a global section of the associated bundle $(\mathcal{E} \times^G V^{\text{reg}}) \otimes \mathcal{F}.$ In this section, we will estimate the following limit:
$$\limsup_{\text{deg}(\mathcal{F}) \rightarrow \infty} \frac{|\mathcal{M}_{\mathcal{F}}(k)|}{|\mathcal{A}_{\mathcal{F}}(k)|},$$where recall that $\mathcal{A}_{\mathcal{F}}(k)$ classifies the family of Weierstrass curves of Hodge bundle $\mathcal{F}.$ It is easy to see that when $f= \text{deg}(\mathcal{F})$ is large enough, then 
\begin{equation}
\label{number of hyperelliptic curves}
|\mathcal{A}_{\mathcal{F}}(k)|= q^{(2+3+ \cdots + 2n+2)f + (2n+1)(1-g)} = q^{(2(n+1)^2+n)f + (2n+1)(1-g) }.
\end{equation}
Furthermore, any $G-$bundle $\mathcal{E}$ can be considered as a $\text{GSO}(U)/\mathbb{G}_m-$bundle. Hence, $\mathcal{E}$ can be lifted uniquely to a $\text{GSO}(U)-$bundle $(\mathcal{E}', \mathcal{L})$, up to tensor twist by a line bundle, with an associated canonical reduction to a parabolic subgroup $P$ of  $\text{GSO}(U).$ Recall that in Section \ref{canonical reduction theory}, if the Levi quotient of $P$ is isomorphic to $$\text{GL}_{n_1} \times \text{GL}_{n_2} \times \cdots \times \text{GL}_{n_t} \times \text{GSO}_{2h},$$ then there is a filtration of the vector bundle $\mathcal{E}' \times^{\text{GSO}(U)} U$: 
$$ 0=\mathcal{E}_P \times^P V_0 \subset \mathcal{E}_P \times^P V_1 \subset \cdots \subset \mathcal{E}_P \times^P V_{t} \subset \mathcal{E}_P \times^P V_t^* \subset \cdots \subset \mathcal{E}_P \times^P V_1^* $$
satisfying that the quotient bundles $X_i = \mathcal{E}_P \times^P V_i / \mathcal{E}_P \times^P V_{i-1}$ for $1 \leq i \leq t$ and $X_{t+1} = (\mathcal{E}_P \times^P V_t^*) / (\mathcal{E}_P \times^P V_t)$ are semistable. It is also easy to see that our Vinberg-Levi's representation $V$ in \ref{Vinberg representation} is isomorphic to $\text{Sym}^2 \text{std} \otimes \mu^{-1}$ as $\text{GSO}(U)-$representations, where $\mu$ is the multiplier coefficient representation:
\begin{align*}
	\mu: & \hspace{0.3cm}\text{GSO}(U)  \longrightarrow  \mathbb{G}_m \\
	& \hspace{0.8cm} T \hspace{0.7cm} \longmapsto  \mu(T),
\end{align*}where $T.T^* = \mu(T). \text{Id}.$ Thus, we have
$$(\mathcal{E}' \times^{\text{GSO}(U)}V) \otimes \mathcal{F}\cong \text{Sym}_0^2(\mathcal{E}') \otimes \mathcal{L}^{\vee} \otimes \mathcal{F} $$
From the semistable filtration of $\mathcal{E}'$, we find the following "matrix filtration" of $\text{Sym}_{0}^2(\mathcal{E}') \otimes \mathcal{L}^{\vee}$:
 \begin{equation}  \label{V(E)}
 \Large \left( \begin{smallmatrix}
	\text{Sym}^2(X_1) \otimes \mathcal{L}^{\vee} & X_1 \otimes X_2 \otimes \mathcal{L}^{\vee} & \cdots & X_1 \otimes X_t \otimes \mathcal{L}^{\vee} & X_1 \otimes X_{t+1}^{\vee}  & X_1 \otimes X_t^{\vee} & \cdots & X_1 \otimes X_1^\vee \\
X_2 \otimes X_1\otimes \mathcal{L}^{\vee} &	\text{Sym}^2(X_2) \otimes \mathcal{L}^{\vee} &  \cdots &X_2 \otimes X_t \otimes \mathcal{L}^{\vee} & X_2 \otimes X_{t+1}^{\vee} & X_2 \otimes X_t^{\vee} & \cdots & X_2 \otimes X_1^\vee \\
\vdots &	\vdots &   &\vdots & \vdots & \vdots &  & \vdots \\
X_t \otimes X_1 \otimes \mathcal{L}^{\vee} & X_t \otimes X_2\otimes \mathcal{L}^{\vee}	 &  \cdots & \text{Sym}^2(X_t) \otimes \mathcal{L}^{\vee} & X_t \otimes X_{t+1}^{\vee} & X_t \otimes X_t^{\vee} & \cdots & X_t \otimes X_1^\vee \\
X_{t+1}^{\vee} \otimes X_1 &	X_{t+1}^{\vee} \otimes X_2   &  \cdots & X_{t+1}^{\vee} \otimes X_t & \text{Sym}_0^2(X_{t+1}) \otimes \mathcal{L}^{\vee} & X_{t+1} \otimes X_t^{\vee}  & \cdots & X_{t+1} \otimes X_1^\vee  \\
X_t^{\vee} \otimes X_1  & X_t^{\vee} \otimes X_2 &  \cdots & X_t^{\vee} \otimes X_t  & X_t^{\vee} \otimes X_{t+1} & \text{Sym}^2(X_t^{\vee}) \otimes \mathcal{L} & \cdots & X_t^{\vee} \otimes X_1^\vee \otimes \mathcal{L} \\
\vdots &	\vdots &   &\vdots & \vdots & \vdots &  & \vdots \\
X_2^{\vee} \otimes X_1  & X_2^{\vee} \otimes X_2 &  \cdots & X_2^{\vee} \otimes X_t  & X_2^{\vee} \otimes X_{t+1}  & X_2^{\vee} \otimes X_t^{\vee} \otimes \mathcal{L} & \cdots & X_2^{\vee} \otimes X_1^\vee \otimes \mathcal{L} \\
X_1^{\vee} \otimes X_1  & X_1^{\vee} \otimes X_2 &  \cdots & X_1^{\vee} \otimes X_t  & X_1^{\vee} \otimes X_{t+1}  & X_1^{\vee} \otimes X_t^{\vee} \otimes \mathcal{L} & \cdots & \text{Sym}^2(X_1^{\vee}) \otimes \mathcal{L}
\end{smallmatrix} \right) 
\end{equation}

Thus, if we denote the above matrix by $A = (a_{ij})_{1 \leq i,j \leq 2t+1},$ then we deduce the following bound of $|\mathcal{M}_{\mathcal{F}, \mathcal{E}}(k)|:$ 
\begin{align}
\label{bound of M}
|\mathcal{M}_{\mathcal{F}, \mathcal{E}}(k)| & = |H^0((\mathcal{E} \times^G V^{\text{reg}}) \otimes \mathcal{F})| \leq |H^0((\mathcal{E}' \times^{\text{GSO}(U)} V) \otimes \mathcal{F})| \nonumber \\
& \leq |H^0(\text{Sym}_0^2 \big(X_1 \oplus \cdots \oplus X_t \oplus X_{t+1} \oplus (X_t^{\vee}\otimes \mathcal{L}) \oplus \cdots \oplus (X_1^{\vee}\otimes \mathcal{L})\big) \otimes \mathcal{L}^{\vee} \otimes \mathcal{F} )| \nonumber \\
& = \prod_{1 \leq i \leq j \leq 2t+1} |H^0(a_{ij} \otimes \mathcal{F})|. 
\end{align}  
\begin{remark}
	In case $h=0$, i.e., we do not have the bundle $X_{t+1},$ the $2t \times 2t$ matrix $B$, that is obtained by removing $X_{t+1}-$related entries from $A,$ is the semistable filtration of $\text{Sym}^2(\mathcal{E}') \otimes \mathcal{L}^{\vee}$. Hence, to bound the size of $\mathcal{M}_{\mathcal{F}, \mathcal{E}}(k)$, we need to consider traceless global sections of $B$. 
\end{remark}
Given a parabolic subgroup $P \subset \text{GSO}(U),$ we denote by $\mathcal{M}_{\mathcal{F},P}(k) = \bigcup\limits_{\mathcal{E} \in \text{Bun}^P(k)}\mathcal{M}_{\mathcal{F},\mathcal{E}}(k),$ where $\text{Bun}^P(k)$ is the set of $\text{GSO}(U)-$bundles having canonical $P-$reductions. Keeping the notations as in Section \ref{canonical reduction theory} and set $\text{deg}(\mathcal{F})=f$, Table \ref{table 1} summarizes the contribution of $\mathscr{M}_{\mathcal{F},P}(k)$ to the average:
\begin{table}[ht]
	\centering
	\begin{tabular}{|c|c|c|c|c|}	
		\hline
		Case &  Hypothesis & Subcase & Subsubcase  & Contr. \\
		\hline 
		\multirow{2}{*}{1}& \multirow{2}{3.4cm}{\hspace{1cm}$h \neq 0,$ \\ $-2\mu_1 +d +f \leq 2g-2$   } & &  &$0$ \\
		&&&& \\
		\hline
		\multirow{4}{*}{2} & \multirow{4}{3.4cm}{ \\$\hspace{0cm}h=0; \mu_t \geq d/2;$ \\ $-2\mu_1 +d +f \leq 2g-2$ } & $2\mu_1 - d \leq (4t-3)f/2 $& & 0  \\ \cline{3-5}  
		
		&& \multirow{3}{*}{$2\mu_1 - d > (4t-3)f/2 $}&$P \neq B$& 0 \\ \cline{4-5}
		&&&$P=B$; $2\mu_1 -d = (2n+1)f$& 1\\ \cline{4-5}
		&&&$P=B$; $2\mu_1 -d < (2n+1)f$ & $f_1(q)$\\
		\hline
		\multirow{4}{*}{3} & \multirow{4}{3.4cm}{ \\$h=0; \mu_t<d/2$ \\ $-2\mu_1+d+f \leq 2g-2$ } & $2\mu_1 - d \leq (4t-3)f/2 $& & 0  \\ \cline{3-5}  
		
		&& \multirow{3}{*}{$2\mu_1 - d > (4t-3)f/2 $}&$P \neq B$& 0 \\ \cline{4-5}
		&&&$P=B$; $2\mu_1 -d = (2n+1)f$& 1\\ \cline{4-5}
		&&&$P=B$; $2\mu_1 -d < (2n+1)f$ & $f_2(q)$\\
		\hline
		4 & \multirow{1}{3.4cm}{$-2\mu_1+d+f >2g-2$ }&& &4 \\
		\hline
	\end{tabular}
	\vskip1ex
	\caption{Contribution to the average.} 	\label{table 1}
\end{table}
\subsection{The case \texorpdfstring{$h \neq 0$}{Lg} and \texorpdfstring{$-2\mu_1 +d + f \leq 2g-2$}{Lg}}
The computation in this subsection is quite similar to the case of odd hyperelliptic curves (see section 2.7 of \cite{DVT17}). In fact, based on the canonical condition of slopes:
$$\mu_1 > \mu_2 > \cdots > \mu_t > \mu_{t+1} =d/2,$$
and the inequality (\ref{bound of M}), we can prove the following lemma:
\begin{lemma}
\label{lemma of zero contribution}
Let $P_1$ be the set of $G-$bundles whose associated canonical reductions (with the notations as above) satisfy that $0 < \mu_i - \mu_{i+1} \leq f$ for all $1 \leq i \leq t$, and $f + \mu_{e+1} < \mu_{1} \leq f + \mu_e$ for some index $ 2 \leq e \leq t,$ where $f = \text{deg}(\mathcal{F}).$ Then 
$$\lim_{\text{deg}(\mathcal{F}) \rightarrow \infty} \sum_{\mathcal{E} \in P_1} \frac{|\mathcal{M}_{\mathcal{F},\mathcal{E}}(k)|}{|\mathcal{A}_{\mathcal{F}}(k)|} = 0$$
\end{lemma}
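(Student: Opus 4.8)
The plan is to adapt the argument of [\cite{DVT17}, \S 2.7] for odd hyperelliptic curves to the present situation; the only structural novelty is the self-dual block $X_{t+1}$ (of slope $\mu_{t+1}=d/2$) coming from the $\text{GSO}_{2h}$-factor of the Levi, and it will enter every estimate symmetrically without changing the sign of the exponents that matter.

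\textbf{Step 1 (stacky count and stratification).} Since $\mathcal{M}$ is an algebraic stack, for a fixed line bundle $\mathcal{F}$ with $f=\text{deg}(\mathcal{F})$ one has
$$\sum_{\mathcal{E}\in P_1}\frac{|\mathcal{M}_{\mathcal{F},\mathcal{E}}(k)|}{|\mathcal{A}_{\mathcal{F}}(k)|}=\frac{1}{|\mathcal{A}_{\mathcal{F}}(k)|}\sum_{\mathcal{E}\in P_1}\frac{|H^0((\mathcal{E}\times^{G}V^{\text{reg}})\otimes\mathcal{F})|}{|\text{Aut}(\mathcal{E})(k)|},$$
and $|\mathcal{A}_{\mathcal{F}}(k)|=q^{(2(n+1)^2+n)f+(2n+1)(1-g)}$ by $(\ref{number of hyperelliptic curves})$. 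I would lift each $\mathcal{E}$ to a $\text{GSO}(U)$-bundle carrying its canonical $P$-reduction with graded pieces $X_1,\dots,X_t,X_{t+1}$, and stratify the $\mathcal{E}\in P_1$ by: the parabolic $P$ (equivalently the ranks $n_1,\dots,n_t,h$, of which there are $O_n(1)$ choices), the index $e$ from the definition of $P_1$, and the discrete $G$-bundle invariants $\mu_i-\mu_{i+1}\in(0,f]$ and $2\mu_1-d$. For fixed discrete data the $X_i$ range over the moduli stack of semistable bundles of prescribed rank and degree, whose groupoid mass contributes an explicit power of $q$.

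\textbf{Step 2 (the two bounds and cancellation).} On each stratum I would bound the numerator from above by $(\ref{bound of M})$ --- a product over the entries $a_{ij}$ of the matrix $(\ref{V(E)})$ of the quantities $|H^0(a_{ij}\otimes\mathcal{F})|$ --- and $|\text{Aut}(\mathcal{E})(k)|$ from below by the estimate established in Section \ref{canonical reduction theory} for the case $h>0$. Every factor of that lower bound (the $|\text{Aut}_{\text{GL}(n_i)}(X_i)|$ and $|\text{Aut}_{\text{GSO}(2h)}(X_{t+1})|$, the $|H^0(X_i\otimes X_j\otimes\mathcal{L}^{\vee})|$, $|H^0(X_i\otimes X_j^{\vee})|$, $|H^0((\wedge^2X_i)\otimes\mathcal{L}^{\vee})|$, $|H^0(X_i\otimes X_{t+1}^{\vee})|$) matches, up to an $\mathcal{F}$-twist and up to a bounded multiplicative constant --- the discrepancy between an automorphism group and the $H^0$ of the corresponding endomorphism bundle being bounded since all ranks are $\le n$ --- a factor of $(\ref{bound of M})$, so almost everything cancels and $|\mathcal{M}_{\mathcal{F},\mathcal{E}}(k)|/|\text{Aut}(\mathcal{E})(k)|$ is controlled by a product of ratios $|H^0(a_{ij}\otimes\mathcal{F})|/|H^0(a_{ij})|$. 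Under the running assumption $p>4n$, symmetric, exterior, tensor powers and duals of semistable bundles remain semistable, so each $a_{ij}\otimes\mathcal{F}$ is a direct sum of semistable bundles whose slopes are explicit affine-linear forms in $\mu_1,\dots,\mu_t,d,f$; Riemann--Roch ($h^0=0$ for negative slope, $h^0=\text{deg}+\text{rk}(1-g)$ for slope $>2g-2$, a uniformly bounded correction otherwise) then turns the contribution of the stratum to $\sum_{\mathcal{E}\in P_1}|\mathcal{M}_{\mathcal{F},\mathcal{E}}(k)|/|\text{Aut}(\mathcal{E})(k)|$ into $c'\cdot q^{E}$, with $c'=c'(n,g)$ and $E$ an explicit affine-linear form in the stratum's discrete invariants.

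\textbf{Step 3 (the decisive exponent count --- the hard part).} It remains to show that after dividing by $|\mathcal{A}_{\mathcal{F}}(k)|$ and summing $q^E$ over the lattice of admissible invariants the total tends to $0$. Here the hypotheses defining $P_1$ are used in full: for $j\le e$ one has $0<\mu_1-\mu_j\le f$, so the entries $X_1\otimes X_j^{\vee}$ of $(\ref{V(E)})$ may have $h^0$ inflated by the $\mathcal{F}$-twist, but this is offset by the corresponding $|H^0(X_1\otimes X_j^{\vee})|$ saved in the automorphism group together with the deficit produced by the constraint $2\mu_1-d>2f$ on the low entries of the $X_1$-row (those with $j>e$, and the block $X_{t+1}$, where $\mu_1-\mu_j>f$); and the assumption $e\geq 2$ guarantees at least two such \emph{good} indices, which is exactly what forces the net drift of $E$ to be negative of order $f$. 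This is the bookkeeping of [\cite{DVT17}, \S 2.7], with $X_{t+1}$ playing the role of an additional $X_j$ of slope $d/2$; carrying it out gives $E\le(2(n+1)^2+n)f-f+O_{n,g}(1)$ on every stratum. Since for fixed $P$ and $e$ the admissible invariants fill a lattice cone on which $E$ has strictly negative leading directions (or is supported on a bounded range), the geometric series converges, the number of strata grows only polynomially in $f$, and the whole sum is $O_{n,g}\!\left(q^{-f}\,\text{poly}(f)\right)\longrightarrow 0$ as $\text{deg}(\mathcal{F})\to\infty$. The main obstacle is precisely this last verification --- that the many competing powers of $q$ coming from $(\ref{bound of M})$, from the automorphism lower bound, and from the mass of the semistable strata combine, under the $P_1$-inequalities, into a negative exponent; everything before it is formal.
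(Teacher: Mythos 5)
Your proposal is essentially the paper's own argument: the paper likewise bounds $|\mathcal{M}_{\mathcal{F},\mathcal{E}}(k)|$ above by the product over the entries of the matrix (\ref{V(E)}) via (\ref{bound of M}), bounds $|\text{Aut}(\mathcal{E})(k)|$ below by the $h>0$ case of the automorphism estimate, applies Riemann--Roch for semistable bundles to the $X_1$-related factors, and uses exactly the mechanism you identify (the vanishing of $H^0(X_1^{\vee}\otimes X_j\otimes\mathcal{F})$ for $j>e$ together with $e\ge 2$) to get a net exponent $\log_q(h_1/h_2)\lesssim fn_1(1-e)\le -fn_1$, finishing by induction on the remaining blocks. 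The only difference is one of completeness rather than of approach: your Step 3 asserts the outcome of the exponent count by appeal to the bookkeeping of [\cite{DVT17}, \S 2.7], whereas the paper actually writes out that computation for the $X_1$-block; since that count is the entire substance of the lemma, it would need to be carried out explicitly for the argument to stand on its own.
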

\begin{proof}
We will prove it by induction. With the hypothesis as in this lemma, by looking at the $X_1-$related part $g(X_1)$ of the right-hand side of the inequalities (\ref{bound of aut group}) and (\ref{bound of M}), we can bound above the $X_1-$related part of 
$$\frac{|\mathcal{M}_{\mathcal{F},\mathcal{E}}(k)|}{|\text{Aut}(\mathcal{E})(k)||\mathcal{A}_{\mathcal{F}}(k)|}$$by
\begin{equation}
     \frac{h_1(\mu_1,f)}{h_2(\mu_1,f)},
\end{equation}
where
\begin{multline*}
    h_1(\mu_1,f) = |H^0(\text{Sym}^2(X_1) \otimes \mathcal{L}^{\vee} \otimes \mathcal{F})|. \prod_{i=2}^{t}\big(|H^0(X_1 \otimes X_i \otimes \mathcal{L}^{\vee} \otimes \mathcal{F}| \big).   |H^0(X_1 \otimes X_{t+1}^{\vee} \otimes \mathcal{F})|\\ . \prod_{i=2}^{t}\big(|H^0(X_1 \otimes X_i^{\vee} \otimes \mathcal{F}| \big). \prod_{i=1}^{e}\big(|H^0(X_1^{\vee} \otimes X_i \otimes \mathcal{F}| \big),
\end{multline*}
and 
\begin{multline*}
    h_2(\mu_1,f) = |H^0((\wedge^2X_1)\otimes \mathcal{L}^{\vee})|. \prod_{i=2}^t\big(|H^0(X_1\otimes X_i \otimes \mathcal{L}^{\vee})|.|H^0(X_1 \otimes X_i^{\vee})| \big). |H^0(X_1 \otimes X_{t+1}^{\vee})| \\ .q^{(4n_1(2n+2-n_1)+2n_1)f}.
\end{multline*}
By Riemann-Roch theorem for semi-stable vector bundles (see [\cite{Tho16}, Lemma 4.4]), when $\text{deg}(\mathcal{F})$ is large enough, we could estimate $h_1$ and $h_2$ as follows:
\begin{align*}
log_q(h_1(\mu_1,f)) &\approx \frac{n_1(n_1+1)}{2}(2\mu_1+f-d)+ \sum_{i=2}^t \big(n_1n_i(\mu_1+\mu_i+f-d)\big) + 2hn_1.(\mu_1-d/2\\
& \hspace{2cm}+f)+\sum_{i=2}^t \big(n_1n_i(\mu_1-\mu_i+f)\big) + \sum_{i=1}^e \big(n_1n_i(\mu_i -\mu_1 + f) \big), \\
log_q(h_2(\mu_1,f))&\approx \frac{n_1(n_1-1)}{2}(2\mu_1-d)+ \sum_{i=2}^t \big(n_1n_i(\mu_1+\mu_i-d)+n_1n_i(\mu_1-\mu_i)\big) + 2hn_1.(\mu_1 \\
 &  \hspace{6cm}-d/2) +\big(4n_1(2n+2-n_1)+2n_1\big)f.
\end{align*}
Hence,
\begin{align*}
log_q\Big(\frac{h_1(\mu_1,f)}{h_2(\mu_1,f)} \Big) & \approx n_1(2\mu_1 -d)+ \sum_{i=2}^e\big( n_1n_i(\mu_i-\mu_1)\big) - f\big(n_1n_{e+1}+\dots + n_1n_t + \\ & \hspace{3cm}+ 2hn_1+ n_1n_t + \cdots + n_1n_2 + \binom{n_1+1}{2}\big) \\
&\leq n_1\big(2\mu_1-d + (\mu_e-\mu_1)-f(2t+2-e) \big) \,\,\,\, \big(\text{since} \,\,  e\geq 2, n_i \geq 1 \big)\\
& \leq n_1 \big( 2(t-e+1)f + f - f(2t+2-e) \big)  \\
&\leq fn_1(1-e) \,\,\, \big(\text{since} \,\, \mu_1 < \mu_e + f, \,\text{and} \,\, \mu_e \leq f(t-e+1)\big).
\end{align*}
This proves the proposition.
\end{proof}
Now we are going to consider the case $\mu_1 \leq f + \mu_{t+1} = f + d/2$. In this case, there exists an index $1 \leq e \leq t$ such that $\mu_1 + \mu_e -d > f \geq \mu_1 + \mu_{e+1} -d.$ By using a similar argument as in the proof of Proposition \ref{lemma of zero contribution}, we can prove that the contribution of this case to the average equals to zero since the $X_1-$related part of our limit is:
$$\frac{h_1(\mu_1,f)}{h_2(\mu_1,f)},$$
where 
\begin{align*}
log_q(h_1(\mu_1,f))  \approx \frac{n_1(n_1+1)}{2}(2\mu_1+f-d)+ \sum_{i=2}^t \big(n_1n_i(\mu_1+\mu_i+f-d)\big) + 2hn_1.(\mu_1-d/2+f)  \\
+\sum_{i=2}^t \big(n_1n_i(\mu_1-\mu_i+f)\big)+ \sum_{i=1}^{t+1} \big(n_1n_i(\mu_i -\mu_1 + f) \big)+\sum_{i=e+1}^{t} \big( n_in_1(-\mu_i  -\mu_1 +d +f) \big) ,   \\
log_q(h_2(\mu_1,f)) \approx \frac{n_1(n_1-1)}{2}(2\mu_1-d)+ \sum_{i=2}^t \big(n_1n_i(\mu_1+\mu_i-d)+n_1n_i(\mu_1-\mu_i)\big) \hspace{2.4cm}\\+ 2hn_1.(\mu_1-d/2) 
+(4n_1(2n+2-n_1)+2n_1)f. \hspace{2.4cm}
\end{align*}
Hence,
\begin{align*}
log_q\Big(\frac{h_1(\mu_1,f)}{h_2(\mu_1,f)} \Big) \approx n_1(2\mu_1 -d)+ \sum_{i=2}^{t+1}\big( n_1n_i(\mu_i-\mu_1)\big) + \sum_{i=e+1}^{t} \big( n_in_1(-\mu_i  -\mu_1 +d) \big) \hspace{2cm} \\- f\big(n_1n_{e}+\dots + n_1n_2 + \binom{n_1+1}{2}\big) \hspace{2cm}\\
\leq n_1\big(2\mu_1-d + n_{t+1}(\mu_{t+1}-\mu_1)+\sum_{i=e+1}^{t} \big( n_i(-\mu_i  -\mu_1 +d) \big) -f(n_e + \dots + n_2 + \frac{n_1+1}{2}) \big) \hspace{1.2cm}\\
\leq \frac{n_1}{2} \big( 2(2\mu_1-d) -2h(2\mu_1-d) + \sum_{i=e+1}^{t} \big( n_i(-2\mu_i +d -2\mu_1 +d) \big) -4f(n_e + \dots + n_2 + \frac{n_1+1}{2}) \big)  \\
\leq -fn_1e \,\,\,\,(\text{since} \,\, \mu_i > d/2 \,\,\forall i, \,\text{and} \,\,\,h \neq 0). \hspace{9cm}
\end{align*}
This implies that the case $f/2 + d/2 < \mu_1  \leq f +d/2$ has no contribution in the average. 

If $0 \leq -2\mu_1 +d +f \leq 2g-2$, then by using a similar computation as above, we also obtain the zero contribution to the average in this case. 
\subsection{The case \texorpdfstring{$h=0$}{Lg}, \texorpdfstring{$-2\mu_1 +d +f \leq 2g-2$}{Lg}, and \texorpdfstring{$\mu_t \geq d/2$}{Lg} } \label{case 2}
Similar to the proposition \ref{lemma of zero contribution} in the previous case, we can prove the following lemma:
\begin{lemma}
\label{zero contribution in case h=0}
Assume that $\mathcal{E}$ is a $G-$bundle associated with a canonical reduction that was described as in the previous subsection. Suppose that $\mu_t \geq d/2$, and $0 < \mu_i - \mu_{i +1} \leq f$ for all $1 \leq i \leq t-1$. If we denote by $P_2$ the set of $\mathcal{E}$ such that $P \neq B$, or $P =B$ and $\mu_1 \leq f + \mu_3$, then 
$$\lim_{\text{deg}(\mathcal{F}) \rightarrow \infty} \sum_{\mathcal{E} \in P_2} \frac{|\mathcal{M}_{\mathcal{F},\mathcal{E}}(k)|}{|\mathcal{A}_{\mathcal{F}}(k)|} = 0$$
\end{lemma}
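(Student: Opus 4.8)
The plan is to run the argument of Lemma~\ref{lemma of zero contribution} essentially verbatim, making the substitutions forced by $h=0$. Since there is now no block $X_{t+1}$, the upper bound (\ref{bound of M}) on $|\mathcal{M}_{\mathcal{F},\mathcal{E}}(k)|$ is taken with respect to the $2t\times 2t$ matrix obtained from (\ref{V(E)}) by deleting the $X_{t+1}$-row and -column (see the Remark after (\ref{bound of M})); the relevant slope constraints are (\ref{slope condition when h=0}), i.e.\ $\mu_1>\cdots>\mu_t$ and $\mu_{t-1}+\mu_t>d$, together with the hypotheses $0<\mu_i-\mu_{i+1}\le f$ for $1\le i\le t-1$ and $\mu_t\ge d/2$; and the lower bound for $|\text{Aut}(\mathcal{E})(k)|$ is the one from case~ii) of the last proposition of \S\ref{canonical reduction theory} (valid when $h=0$ and $2\mu_t>d$), or from case~iii) on the boundary $2\mu_t=d$. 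Exactly as in the proof of Lemma~\ref{lemma of zero contribution} one proceeds by induction on $t$, peeling off the $X_1$-block: it is enough to show that the $X_1$-related part of $|\mathcal{M}_{\mathcal{F},\mathcal{E}}(k)|\big/\big(|\text{Aut}(\mathcal{E})(k)|\,|\mathcal{A}_{\mathcal{F}}(k)|\big)$ is $\le q^{-cf}$ for some $c>0$ that can be taken uniform over all $\mathcal{E}\in P_2$ of a given parabolic type, since the remaining factor has the same shape for the $\text{GSO}$-bundle of rank $2n+2-2n_1$ cut out by the filtration, and the number of bundles realizing each discrete datum grows only polynomially in $f$.

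For the $X_1$-estimate one collects from $\prod_{i\le j}|H^0(a_{ij}\otimes\mathcal{F})|$ the factors involving $X_1$ (the first row and the last column of the matrix), writes the result as a quotient $h_1(\mu_1,f)/h_2(\mu_1,f)$ as in Lemma~\ref{lemma of zero contribution} --- $h_1$ being the product of the relevant $|H^0(\cdot\otimes\mathcal{F})|$, and $h_2$ the product of the $X_1$-related factors of the case~ii)/iii) automorphism bound together with the $X_1$-share $q^{(4n_1(2n+2-n_1)+2n_1)f}$ of $|\mathcal{A}_{\mathcal{F}}(k)|$ --- and then, since every bundle involved is a tensor or symmetric-power combination of the semistable $X_i$ (and their duals) twisted by a line bundle whose degree grows with $f$, applies Riemann--Roch for semistable bundles ([\cite{Tho16}, Lemma 4.4]) to turn $\log_q\!\big(h_1/h_2\big)$ into an explicit piecewise-linear function of $\mu_1,\dots,\mu_t,f$. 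Introducing the index $e$ determined by $f+\mu_{e+1}<\mu_1\le f+\mu_e$ and estimating the slope linear forms against (\ref{slope condition when h=0}), one arrives at a bound of the same shape as in Lemma~\ref{lemma of zero contribution}. One then checks, by a short case analysis on $P_2$ --- using that the hypothesis $\mu_1\le f+\mu_3$ forces $e\ge 3$ when $P=B$, and peeling off a $\mathrm{GL}$-block of rank $\ge 2$ first when $P\ne B$ --- that the resulting linear form is $\le -cf$ for some $c>0$. The two boundary regimes are handled identically: case~ii) is replaced by case~iii) (amounting to interchanging $X_t$ and $X_t^{\vee}\otimes\mathcal{L}$) when $2\mu_t=d$, and the Riemann--Roch asymptotics for $\text{Sym}^2(X_1^{\vee})\otimes\mathcal{L}\otimes\mathcal{F}$ are replaced by the elementary bound $\dim H^0(F)\le \text{rk}(F)+\max(0,\text{deg}(F))$ for semistable $F$ when $0\le -2\mu_1+d+f\le 2g-2$. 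Summing $q^{-cf}$ over the polynomially many bundles of each type, and then over the discrete data, yields the limit $0$.

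The one genuinely new difficulty, compared with Lemma~\ref{lemma of zero contribution}, is locating the source of the decisive negativity. In the $h\ne 0$ case it was the term $2hn_1(\mu_1-d/2)$ coming from the block $X_{t+1}$, combined with $\mu_i>d/2$ for all $i$; with $h=0$ that term disappears, so one must instead extract the sign from the canonical inequality $\mu_{t-1}+\mu_t>d$ (equivalently, from the last-column entries $X_i^{\vee}\otimes X_1^{\vee}\otimes\mathcal{L}\otimes\mathcal{F}$) together with the constraint defining $P_2$. Moreover, in the $h=0$ regime the staircase hypothesis no longer bounds $\mu_1$ from above, so the essential point is to verify that the coefficient of $\mu_1$ in $\log_q(h_1/h_2)$ is nonpositive and the coefficient of $f$ is bounded away from $0$ \emph{uniformly} in the slopes --- i.e.\ that the linear form in $(\mu_1,\dots,\mu_t,f)$ produced by Riemann--Roch is $\le -cf$, not merely $\le 0$, on the whole region cut out by $P_2$ --- so that the sum over the now genuinely unbounded range of canonical reductions still converges to $0$. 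This bookkeeping, including the careful matching of the positive and negative entries of the matrix according to the sign of $\mu_1-\mu_j+f$, is the crux of the proof.
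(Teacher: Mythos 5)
Your proposal takes essentially the same approach as the paper: the paper's own proof of this lemma is literally a one-line deferral ("this can be done by a similar computation as in the proof of Lemma~\ref{lemma of zero contribution}"), and you flesh out exactly that computation with the correct modifications for $h=0$ (dropping the $X_{t+1}$ block, using the slope condition $\mu_{t-1}+\mu_t>d$ and cases~ii)/iii) of the automorphism bound, and locating the negativity in the $P_2$ constraints rather than in the $2hn_1(\mu_1-d/2)$ term). Your identification of the crux --- that the linear form must be $\le -cf$ uniformly over the now unbounded range of slopes --- is a point the paper glosses over entirely, so your write-up is if anything more careful than the original.
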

\begin{proof}
This can be done by a similar computation as in the proof of Proposition \ref{lemma of zero contribution}.
\end{proof}
From the above result and the proof of Proposition \ref{lemma of zero contribution}, we can see that the contribution to the average is positive only if the parabolic canonical reduction of our bundle $\mathcal{E}$ is associated with the Borel subgroup, i.e., $n_i =1$ for all $1 \leq i \leq n+1$, and $f \geq \mu_i - \mu_{i+1} \geq f/2$ for all $1 \leq i \leq n$. Moreover, in case $\mu_{n+1} > d/2$, we need to have that $2\mu_{n+1} > d + f/2$. Hence, the semistable filtration (\ref{canonical reduction}) of $\mathcal{E}$ splits if $f$ is large enough. Additionally, from the smoothness of generic fibers, if $\mu_i - \mu_{i +1} =f $ for any $1 \leq i \leq n$ then $X_i \cong  X_{i+1} \otimes \mathcal{F}.$ By using (\ref{bound of aut group}) and (\ref{bound of M}), we find that:
\begin{multline*}
|\mathcal{M}_{\mathcal{F},\mathcal{E}}(k)| = h_1(g,X_2, \dots, X_{n+1}). |H^0(X_2 \otimes X_1^{\vee} \otimes \mathcal{F})| q^{\sum_{i=1}^{n+1}\big( (\mu_1 + \mu_i -d + f) + (\mu_1 -\mu_i + f)\big ) + (2n+2)(1-g)} \\ \hspace{12cm}.|\text{Aut}(\mathcal{E})(k)|^{-1} \\
= h_1(g,X_2, \dots, X_{n+1}). |H^0(X_2 \otimes X_1^{\vee} \otimes \mathcal{F})|. |\text{Aut}(\mathcal{E})(k)|^{-1}. q^{(2n+2)\mu_1 - (n+1)d + (2n+2)f +(2n+2)(1-g)},
\end{multline*}
\begin{align*}
|\text{Aut}(\mathcal{E})(k)|& = h_3(g,X_2, \dots, X_{n+1}). |\text{Aut}(X_1)|. \prod_{i=2}^{n+1}\big( |H^0(X_1 \otimes X_i \otimes \mathcal{L}^{\vee})|.|H^0(X_1 \otimes X_i^{\vee})| \big) \\
& = h_4(g,X_2, \dots, X_{n+1}). (q-1) q^{n(2\mu_1 -d) + 2n(1-g)},
\end{align*}
and
$$ |\mathcal{A}_{\mathcal{F}}(k)| = h_2(g,X_2, \dots, X_{n+1}). q^{(4n+3)f + 2(1-g)},$$
where $h_i$ is a function that depends only on $X_2, \dots X_{n+1},$ and $g-$the genus of $C$. Hence, the contribution of this case to the average is (we consider the $X_1-$related part firstly, then the rest can be done inductively):
\begin{align*}
    A & = \sum_{\mu_2 + f/2 < \mu_1 < \mu_2 + f} \hspace{0.3cm} \sum\limits_{X_1 \in \text{Bun}_{\mu_1,1}(C)(k)} \frac{|H^0(X_2 \otimes X_1^{\vee} \otimes \mathcal{F})|. q^{(2n+2)\mu_1 - (n+1)d + (2n+2)f +(2n+2)(1-g)}}{(q-1)q^{(4n+3)f+2(1-g)}.q^{n(2\mu_1-d) +2n(1-g)}}  \\
    & \hspace{2cm}+ \frac{|H^0(\mathcal{O}_C)^*|. q^{(2n+2)(\mu_2+f) - (n+1)d + (2n+2)f +(2n+2)(1-g)}}{(q-1)q^{(4n+3)f+2(1-g)}.q^{n(2\mu_2 +2f-d) +2n(1-g)}}\\
    & =\sum_{\mu_2 + f/2 < \mu_1 < \mu_2 + f}  \frac{|\text{Sym}_C^{\mu_2-\mu_1+f}(\mathbb{F}_q)|. q^{2\mu_1-d}}{(q-1)q^{(2n+1)f}}\hspace{0.5cm}  + \hspace{0.5cm} \frac{q^{2\mu_2-d}}{q^{(2n-1)f}} \\
    & \leq  T. \sum_{i=[\frac{f}{2}]}^{f-1} \frac{q^{2\mu_2-d+i}}{(q-1)q^{2nf}} +\frac{q^{2\mu_2-d}}{q^{(2n-1)f}} , 
\end{align*}
where $T$ is a constant that depends only on $g$ and $n$. From the condition of slopes $\mu_i$ ($1\leq i \leq n+1$), we deduce that $2\mu_2-d \leq (2n-1)f$, and the equality holds only if $\mu_i=\mu_{i+1} +f$ for all $2\leq i \leq n$ and $2\mu_{n+1} -d = f$. Thus, $A$ will be bounded by $\frac{T}{(q-1)^2} + 1/q$  if $2\mu_2 -d <(2n-1)f$. Otherwise, if $2\mu_2-d = (2n-1)f$ then $A$ will be bounded by $1+ \frac{T}{(q-1)^2}$. By induction, we conclude that the contribution of this case to the average is bounded by $1 + f(q),$ where $lim_{q \rightarrow \infty} f(q) = 0.$ Moreover, the case that contributes 1 can be seen by the following proposition:
\begin{proposition}Assume that the $G-$bundle $\mathcal{E}$ has the canonical reduction to the Borel subgroup. Let $X_i$ be the associated line bundle as in the filtration (\ref{canonical reduction}) of $\mathcal{E}$ for $1 \leq i \leq n+1$, and denote $\mu_i$ the degree of $X_i$. If $\mu_i=\mu_{i+1} +f$ for all $1\leq i \leq n$ and $2\mu_{n+1} -d = f$, then any sections of $(\mathcal{E} \times^G V^{\text{reg}})\otimes \mathcal{F}$, whose generic fibers have nonzero discriminant, will factor through the Kostant section $\kappa_1.$ As a consequence, the contribution of this case to the average equals to 1.  
	\label{factor through kostant 1}
\end{proposition}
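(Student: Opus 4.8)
The plan is to read off the shape of a section $s$ of $(\mathcal{E}\times^{G}V^{\mathrm{reg}})\otimes\mathcal{F}$ from the matrix filtration (\ref{V(E)}). In the situation of the proposition the canonical reduction is to the Borel, so all $n_i=1$ and the rank $2n+2$ bundle $W:=\mathcal{E}'\times^{\mathrm{GSO}(U)}U$ carries a full flag $0=F^{0}\subset F^{1}\subset\cdots\subset F^{2n+2}=W$ with line-bundle graded pieces $W_{1},\dots,W_{2n+2}$; using $\mu_{i}=\mu_{i+1}+f$ for $i\le n$ and $2\mu_{n+1}=d+f$ one computes $\deg W_{j}=m+(n+1-j)f$ with $m=(d+f)/2$, so that $\underline{\mathrm{Hom}}(W_{a},W_{b})\otimes\mathcal{F}$ has degree $(a-b+1)f$. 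First I would show $s(F^{j})\subseteq F^{j+1}\otimes\mathcal{F}$ for every $j$: the composite $F^{j}\hookrightarrow W\xrightarrow{s}W\otimes\mathcal{F}\to(W/F^{j+1})\otimes\mathcal{F}$ is a global section of a bundle all of whose graded pieces, being $\underline{\mathrm{Hom}}(W_{a},W_{b})\otimes\mathcal{F}$ with $a\le j$ and $b\ge j+2$, have degree $\le-f<0$, hence it vanishes. Thus, in the matrix of $s$ adapted to the flag, the only entries that can be nonzero lie on or above the first subdiagonal.

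Next I would use regularity together with the hypothesis $\mathrm{disc}(s)\ne 0$ on the generic fibre to force each subdiagonal entry $s_{j+1,j}\colon W_{j}\to W_{j+1}\otimes\mathcal{F}$ to be nonzero. If $s_{j_{0}+1,j_{0}}=0$ for some $j_{0}\le n+1$, then by the preceding paragraph $s(F^{j_{0}})\subseteq F^{j_{0}}\otimes\mathcal{F}$; since $s$ is self-adjoint and $(F^{j_{0}})^{\perp}=F^{2n+2-j_{0}}$ for the $\mathcal{L}$-valued form, $F^{2n+2-j_{0}}$ is $s$-stable as well, and over the generic point the action of $s$ on $W/F^{2n+2-j_{0}}$ is the adjoint of its action on $F^{j_{0}}$, so the characteristic polynomial acquires the repeated factor $\operatorname{charpoly}(s|_{F^{j_{0}}})$ and the discriminant vanishes generically --- a contradiction; the case $j_{0}\ge n+2$ reduces to this one by self-adjointness. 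Hence each $s_{j+1,j}$ is a nonzero morphism between line bundles of equal degree, so an isomorphism, giving $W_{j+1}\cong W_{j}\otimes\mathcal{F}^{\vee}$ and $W_{j}\cong W_{1}\otimes\mathcal{F}^{\otimes(1-j)}$. Since then $\deg(W_{a}\otimes W_{b}^{\vee})=(b-a)f>2g-2$ for $a<b$ and $\deg\mathcal{F}$ large, the filtration splits, $W\cong\bigoplus_{j}W_{1}\otimes\mathcal{F}^{\otimes(1-j)}$ with $W_{1}^{\otimes 2}\cong\mathcal{L}\otimes\mathcal{F}^{\otimes(2n+1)}$, and consequently $\mathcal{E}$, as a $G$-bundle, is pinned down up to isomorphism to a single bundle $\mathcal{E}_{0}$, the one underlying the Kostant section.

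The crux is then to conjugate $s$ into Kostant form by an automorphism of $\mathcal{E}_{0}$. Automorphisms of the split orthogonal bundle compatible with the decreasing flag are block upper triangular: their strictly-above-diagonal entries range over $H^{0}(\mathcal{F}^{\otimes(b-a)})$ with $a<b$, of positive degree and hence plentiful, while the diagonal entries are units. I would first normalise all subdiagonal entries of $s$ to $1$ by a diagonal automorphism, and then, sweeping anti-diagonal by anti-diagonal away from the subdiagonal, eliminate the non-Kostant entries one at a time by elementary conjugations: the correction produced in the entry under treatment equals (a unit subdiagonal entry) times (a free parameter in some $H^{0}(\mathcal{F}^{\otimes k})$, $k\ge 1$) plus terms already fixed, so every step is solvable with no cohomological obstruction, the compatibility with the form merely halving the set of free parameters. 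The outcome is exactly $\kappa_{1}$ of the invariant $\underline{c}$ of $s$; it is $\kappa_{1}$ and not $\kappa_{2}=J\kappa_{1}J^{*}$ because the canonical reduction yields the flag with strictly decreasing slopes $\mu_{1}>\cdots>\mu_{n+1}>\cdots$, which is precisely the one adapted to $\kappa_{1}=A+B$, whereas $\kappa_{2}$ is adapted to the reversed flag and so cannot arise from a canonical reduction. This bookkeeping --- the order of elimination and the orthogonality constraints --- is the step I expect to be the main obstacle, and it runs parallel to the normalisation carried out in \cite{DVT17}.

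Finally, ``$s$ factors through $\kappa_{1}$'' means that for each admissible $\underline{c}$ the generically-disc-nonzero sections of $V^{\mathrm{reg}}(\mathcal{E}_{0},\mathcal{F})$ with invariant $\underline{c}$ form a single $\operatorname{Aut}(\mathcal{E}_{0})$-orbit, of size $|\operatorname{Aut}(\mathcal{E}_{0})|/|H^{0}(C,J_{\mathcal{H}}[2])|$. Summing this over $\underline{c}\in\mathcal{A}_{\mathcal{F}}(k)$, dividing by $|\operatorname{Aut}(\mathcal{E}_{0})|$ and by $|\mathcal{A}_{\mathcal{F}}(k)|$, and discarding the negligible loci where $\mathrm{disc}\equiv 0$ or where $J_{H}[2](K)\ne 0$ (treated in the earlier propositions), the contribution of this case to the average tends to $1$ as $\deg\mathcal{F}\to\infty$.
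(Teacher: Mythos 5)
Your proposal is correct and follows essentially the same route as the paper: degree bounds force the section into Hessenberg form with subdiagonal entries in degree-zero line bundles, the nonzero-discriminant hypothesis forces those entries to be nonvanishing (hence the bundles trivial), and conjugation by diagonal then unipotent automorphisms sweeps the section into $\kappa_1$. You supply more justification than the paper does for why a vanishing subdiagonal entry kills the discriminant (via self-adjointness and the squared characteristic-polynomial factor), while the paper makes the final elimination explicit with concrete matrices $C$ and $D$ where you only sketch the sweep.
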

\begin{proof}
First, we notice that for any $1 \leq i \leq n$, $X_i \otimes X_{i+1}^{\vee} \otimes \mathcal{F}$ is a line bundle of degree 0, and so is $X_{n+1}^{\otimes -2} \otimes \mathcal{L} \otimes \mathcal{F}$. Hence, to satisfy that the generic fiber has nonzero discriminant, all the above line bundles need to be trivial. In this case, any sections are of the following form:
$$\begin{pmatrix}
* & * & \cdots & * & * & *  \\
x_1 & * & \cdots & * & * & *  \\
0 & x_2 & \cdots & * & * & * \\
\vdots & \vdots & \ddots & \vdots & \vdots & \vdots  \\
0 & 0 & \cdots & x_2 & * & *  \\
0 & 0 & \cdots & 0 & x_1 & * 
\end{pmatrix}
$$where $x_i \in H^0(C, \mathcal{O}_C)^* = k^*.$ Let we denote $A$ the above matrix, then firstly we will try to transform all entries in the sub-diagonal to $1$. That can be done as follows: 
\footnotesize
\[ diag(a_1,\dots, a_n,1,a_n^{-1}, \dots,a_1^{-1}).
\left( \begin{array}{cccccc}
* & * & \cdots & * & * & *  \\
x_1 & * & \cdots & * & * & *  \\
0 & x_2 & \cdots & * & * & * \\
\vdots & \vdots & \ddots & \vdots & \vdots & \vdots  \\
0 & 0 & \cdots & x_2 & * & *  \\
0 & 0 & \cdots & 0 & x_1 & * 
 \end{array} \right).diag(a_1^{-1},\dots, a_n^{-1},1,a_n,\dots,a_1)\]
 \[=\left( \begin{array}{cccccc}
* & * & \cdots & * & * & *  \\
a_1^{-1}a_2x_1 & * & \cdots & * & * & *  \\
0 & a_2^{-1}a_3x_2 & \cdots & * & * & * \\
\vdots & \vdots & \ddots & \vdots & \vdots & \vdots  \\
0 & 0 & \cdots & a_2^{-1}a_3x_2 & * & *  \\
0 & 0 & \cdots & 0 & a_1^{-1}a_2x_1 & * 
 \end{array} \right)\]  \normalsize
 Now we can finish this step by taking $a_n :=x_n ; a_{n-1}:=x_nx_{n-1}; \dots; a_1 :=x_nx_{n-1}\dots x_1.$ \\
 In the next step, we will make the first row and the last column to become the desired form, i.e., all entries in the first row are zero except two last entries. To do that we consider the following matrix in $\text{SO}(U)$:
 \[ C=\left( \begin{array}{cccccc}
1 & a_1 & a_2 & \cdots & a_{2n-1} & b  \\
0 & 1 & 0 & \cdots & 0 & -a_{2n-1}   \\
0 & 0 & 1 & \cdots & 0 & -a_{2n-2} \\
\vdots & \vdots & \vdots & \ddots &  \vdots & \vdots  \\
0 & 0 & 0 & \cdots  & 1 & -a_1  \\
0 & 0 & 0 & \cdots  & 0 & 1 
 \end{array} \right) \]
 where $b=-\dfrac{1}{2}\sum_{i=1}^{2n-1}a_ia_{2n-i}$. Then we can choose $a_i$ such that 
\[ C . \left( \begin{array}{cccccc}
* & * & \cdots & * & * & *  \\
1 & * & \cdots & * & * & *  \\
0 & 1 & \cdots & * & * & * \\
\vdots & \vdots & \ddots & \vdots & \vdots & \vdots  \\
0 & 0 & \cdots & 1 & * & *  \\
0 & 0 & \cdots & 0 & 1 & * 
 \end{array} \right) . C^{-1} = \left( \begin{array}{cccccc}
0 & 0 & \cdots & 0 & * & *  \\
1 & * & \cdots & * & * & *  \\
0 & 1 & \cdots & * & * & 0 \\
\vdots & \vdots & \ddots & \vdots & \vdots & \vdots  \\
0 & 0 & \cdots & 1 & * & 0  \\
0 & 0 & \cdots & 0 & 1 & 0 
 \end{array} \right) = E. \]
 
Similarly, by considering a matrix of the form
\[ D=\left( \begin{array}{cccccccc}
1 & 0 & 0 & 0 & \cdots & 0 & 0 & 0 \\
0 & 1 & a_1 & a_2 & \cdots & a_{2n-3} & b & 0 \\
0 & 0 & 1 & 0 & \cdots & 0 & -a_{2n-3} & 0  \\
0 & 0 & 0 & 1 & \cdots & 0 & -a_{2n-4} & 0 \\
\vdots & \vdots & \vdots & \vdots & \ddots &  \vdots & \vdots & \vdots \\
0 & 0 & 0 & 0 & \cdots  & 1 & -a_1 & 0  \\
0 & 0 & 0 & 0 & \cdots  & 0 & 1 & 0 \\
0 & 0 & 0 & 0 & \cdots  & 0 & 0 & 1
 \end{array} \right) \]
 we can choose $a_i$ such that $D.E.D^{-1}$ has the desirable second row. By doing the same way, after $n$ steps we will obtain a matrix that belongs to our Kostant section $\kappa_1$. 
\end{proof}
\subsection{The case \texorpdfstring{$h=0$}{Lg}, \texorpdfstring{$-2\mu_1 +d +f \leq 2g-2$}{Lg}, and \texorpdfstring{$\mu_t < d/2$}{Lg}}
This case can be done as the same as the previous case with an only small change that in the whole argument and computation, we replace $X_t$ by $X^{\vee} \otimes \mathcal{L}$. The contribution of this case to the average is $1 + O(q^2),$ where the constant 1 is for the case that $n_i =1$ for all $1 \leq i \leq t=n+1,$ $\mu_i = \mu_{i+1} +f $ for all $1 \leq i \leq n$ and $d - 2\mu_{n+1} = f.$ In that case, any regular sections of $(\mathcal{E} \times^G V^{\text{reg}}) \otimes \mathcal{F}$ factor through the Kostant section $\kappa_2$ (c.f. Proposition \ref{factor through kostant 1}).

\subsection{The case \texorpdfstring{$-2\mu_1 +d +f > 2g-2$}{Lg} or \texorpdfstring{$\mathcal{E}$}{Lg} is semi-stable} \label{case 4} We can see that $H^0((V \times^G \mathcal{E}) \otimes \mathcal{F}) = H^0(\text{Sym}^2_0(\mathcal{E} \otimes \mathcal{L}^\vee \otimes \mathcal{F})$ equals to the direct sum of the sets of global sections of vector bundles in the "matrix filtration" of $\text{Sym}^2_0(\mathcal{E})\otimes \mathcal{L}^\vee \otimes \mathcal{F}$ (see the matrix before (\ref{bound of M})) since all vector bundles there have degrees bigger than $2g-2$. More precisely,
\begin{align*}
    \text{dim}(H^0(\text{Sym}^2_0(\mathcal{E}) \otimes \mathcal{L}^\vee \otimes \mathcal{F})) &= \text{deg}(\text{Sym}^2_0(\mathcal{E}) \otimes \mathcal{L}^\vee \otimes \mathcal{F}) + (1-g).\text{rank}(\text{Sym}^2_0(\mathcal{E})) \\
&= f . \text{rank}(\text{Sym}^2_0(\mathcal{E})) + (1-g).\text{rank}(\text{Sym}^2_0(\mathcal{E}))  \\
   & = ((2n+3)(n+1)-1)(f+1-g).
\end{align*}
In the space of $G-$bundles $\text{Bun}_G(k)$, we denote $d\mathcal{E}$ to be the counting measure, weighted by the sizes of the automorphism groups. Then we have the following qualities:
\begin{align}
&\lim_{f \rightarrow \infty} \dfrac{\mathlarger{\int}_{\text{Bun}_G^{\mu_1<f/2+d/2-g+1}(k)}|H^0(\text{Sym}^2_0(\mathcal{E}) \otimes \mathcal{L}^\vee \otimes \mathcal{F})|\,\,d\mathcal{E}}{\mathcal{A}_{\mathcal{F}}(k)} \nonumber \\
=\hspace{0.5cm}& \lim_{f \rightarrow \infty}\frac{ \mathlarger{\int}_{\text{Bun}_G^{\mu_1<f/2+d/2-g+1}(k)} q^{((2n+3)(n+1)-1)(f+1-g)}\,\, d\mathcal{E}}{q^{(2(n+1)^2+n)f + (2n+1)(1-g) }} \nonumber\\
=\hspace{0.5cm}& \lim_{f \rightarrow \infty}\frac{|\text{Bun}_G^{\mu_1<f/2+d/2-g+1}(k)|. q^{((2n+3)(n+1)-1)(f+1-g)}\,\, }{q^{(2(n+1)^2+n)f + (2n+1)(1-g) }} \nonumber \\
=\hspace{0.5cm}&  |\text{Bun}_G(k)|.q^{(1-g)(2n^2+3n+1)} \nonumber \\
=\hspace{0.5cm}& 4.q^{\text{dim}(G).(g-1)}. \zeta_C(n+1). \prod_{i=1}^n(\zeta_C(2i)). q^{(1-g)(2n^2+3n+1)} \nonumber \\
&(\text{since the Tamagawa number of $G=\text{PSO}(2n+2)$ is $4$.} ) \nonumber \\
=\hspace{0.5cm}& 4.\zeta_C(n+1). \prod_{i=1}^n(\zeta_C(2i)) \label{equality 10}
\end{align}
By using the dominated convergence theorem, we have that
\begin{align*}
	&\lim_{f \rightarrow \infty} \sum_{\mathcal{E} \in \text{Bun}_G^{\mu_1<f/2+d/2-g+1}(k)} \frac{|\mathcal{M}_{\mathcal{E}, \mathcal{F}}(k)|}{|\mathcal{A}_{\mathcal{F}}(k)|} = \lim_{f \rightarrow \infty} \frac{\mathlarger{\int}\limits_{\text{Bun}_G^{\mu_1<f+d/2-g+1}(k)} |H^0(\text{Sym}^2_0(\mathcal{E}) \otimes \mathcal{L}^\vee \otimes \mathcal{F})^{\text{reg}}|d\mathcal{E}}{|\mathcal{A}_{\mathcal{F}}(k)|} \\
	&= \lim_{f \rightarrow \infty} \frac{\mathlarger{\int}\limits_{\text{Bun}_G^{\mu_1<f+d/2-g+1}(k)} \mathlarger{\frac{|H^0(\text{Sym}^2_0(\mathcal{E}) \otimes \mathcal{L}^\vee \otimes \mathcal{F})^{\text{reg}}|}{|H^0(\text{Sym}^2_0(\mathcal{E}) \otimes \mathcal{L}^\vee \otimes \mathcal{F})|}}.|H^0(\text{Sym}^2_0(\mathcal{E}) \otimes \mathcal{L}^\vee \otimes \mathcal{F})| d\mathcal{E}}{|\mathcal{A}_{\mathcal{F}}(k)|} 
	\end{align*}
	\begin{align*}
	& = \lim_{f \rightarrow \infty} \dfrac{\mathlarger{\int}_{\text{Bun}_G^{\mu_1<f/2+d/2-g+1}(k)}|H^0(\text{Sym}^2_0(\mathcal{E}) \otimes \mathcal{L}^\vee \otimes \mathcal{F})|\,\,d\mathcal{E}}{\mathcal{A}_{\mathcal{F}}(k)} . \lim_{f \rightarrow \infty} \frac{|H^0(\text{Sym}^2_0(\mathcal{E}) \otimes \mathcal{L}^\vee \otimes \mathcal{F})^{\text{reg}}|}{|H^0(\text{Sym}^2_0(\mathcal{E}) \otimes \mathcal{L}^\vee \otimes \mathcal{F})|} \\
	& \leq 4 \zeta_{C}(n+1) . \prod_{i=1}^n (\zeta_{C}(2i)) . \zeta_{C}(n+1)^{-1}.\prod_{i=1}^n \zeta_{C}(2i)^{-1}. \prod_{v \in |C|}(1 + |k_v|^{-n-1}|) \\ 
	& \hspace{3cm}(\text{by (\ref{equality 10}) and Proposition \ref{regular locus in general case} }) \\
	&= 4 . \prod_{v \in |C|}(1+|k_v|^{-n-1}). 
\end{align*}
\section{Proof of the main theorems} \label{section 5}
\subsection{Proof of theorem \ref{stack version of main theorem 1}}
The proof of this theorem is followed by the previous counting sections and the following remarks:
\begin{itemize}
\item When we restrict everything to the minimal locus, we need to modify two cases: in the case \ref{case 2}, the error function $f(q)$ picks up at most an extra factor $\zeta_{C}((n+2)(2n+1))$ due to Proposition \ref{regular locus in minimal case 1}, whether in the last case \ref{case 4}, instead of using Proposition \ref{regular locus in general case}, we use Proposition \ref{regular locus in minimal case 1} and \ref{regular locus in minimal case 2}, and obtain the same average number. 
\item If we consider the family of semi-stable hyperelliptic surfaces with irreducible fibers, then the error function $f(x)$ will pick up at most an extra factor $\prod_{v \in |C|}(1-\frac{d_v}{|k_v|^{2n+1}})^{-1}$ (see Proposition \ref{regular locus in semi-stable case}); hence, it still satisfies that $\lim_{q \rightarrow \infty} f(x) = 0.$ In the last case \ref{case 4}, we use Proposition \ref{regular locus in semi-stable case} instead of Proposition \ref{regular locus in general case} to obtain the exact number 4.
\end{itemize}
\subsection{Proof of theorem \ref{stack version of main theorem 2}}
We first need to know why we do not have any error function $f(q)$ in the transversal case. Here is the answer:
\begin{proposition} \label{ignore error term}
Let $\mathcal{E}$ be a  $G-$bundle. Assume that $\mathcal{E}$ has the canonical reduction to the Borel subgroup. Let $X_i$ be the associated line bundle as in the filtration (\ref{canonical reduction}) of $\mathcal{E}$ for $1 \leq i \leq n+1$, and denote $\mu_i$ the degree of $X_i$. If $ f/2 < \mu_i-\mu_{i+1} \leq f$ for all $1\leq i \leq n,$ $ f/2 < 2\mu_{n+1} - d \leq f$, and $2\mu_1-d < (2n+1)f,$ then discriminants of any sections of $(\mathcal{E} \times^G V^{\text{reg}})\otimes \mathcal{F}$ are not squarefree.  	
\end{proposition}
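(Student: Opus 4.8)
The plan is to deduce the ``matrix shape'' of an admissible section directly from the slope inequalities, and then to exhibit a place $v\in|C|$ where its characteristic polynomial specialises degenerately enough to force the discriminant to vanish there to order at least two (or to vanish identically).

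\emph{Shape of the section.} Let $(\mathcal E',\mathcal L)$ be the $\text{GSO}(U)$-lift of $\mathcal E$, with canonical reduction to $B\subset\text{GSO}(U)$; this equips $\mathcal E'\times^{\text{GSO}(U)}U$ with a full flag $0=W_0\subset W_1\subset\cdots\subset W_{2n+2}$ whose graded pieces $Y_i=W_i/W_{i-1}$ are line bundles. Put $\nu_i=\deg Y_i$, so $\nu_1>\cdots>\nu_{2n+2}$, $\nu_i+\nu_{2n+3-i}=d$, and the consecutive gaps $g_i:=\nu_i-\nu_{i+1}$ satisfy $g_i=g_{2n+2-i}$, with $g_i=\mu_i-\mu_{i+1}$ for $1\le i\le n$ and $g_{n+1}=2\mu_{n+1}-d$. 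The hypotheses give $f/2<g_i\le f$ for every $i$, whereas $\sum_{i=1}^{2n+1}g_i=2\mu_1-d<(2n+1)f$, so at least one gap is strictly less than $f$. A section $T$ of $(\mathcal E\times^GV)\otimes\mathcal F\cong\text{Sym}^2_0(\mathcal E')\otimes\mathcal L^\vee\otimes\mathcal F$ is a traceless, self-adjoint $\mathcal F$-twisted operator; in the flag basis its $(b,a)$-entry is a section of $Y_bY_a^{-1}\mathcal F$ of degree $\nu_b-\nu_a+f$, and the symmetry of $\text{Sym}^2$ identifies it (up to a nonzero scalar) with the $(2n+3-a,2n+3-b)$-entry. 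Since any two consecutive gaps already sum to more than $f$, the $(b,a)$-entry vanishes identically as soon as $b\ge a+2$; hence $T(W_i)\subseteq W_{i+1}\otimes\mathcal F$ for all $i$, and in particular the corner entry $T_{2n+2,1}$ is the zero section. Write $e_i:=T_{i+1,i}$, a section of a line bundle of degree $f-g_i\ge0$, so that $e_i$ and $e_{2n+2-i}$ have the same zeros; let $f_T$ be the characteristic polynomial of $T$ and $\Delta(f_T)\in H^0(\mathcal F^{\otimes(2n+1)(2n+2)})$ its discriminant.

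\emph{Locating a bad place.} Fix an index $j$ with $g_j<f$, so $e_j$ is a section of a line bundle of positive degree. If $e_j\equiv0$, then $T$ preserves $W_j$ and, when $j\le n$, also $W_j^\perp=W_{2n+2-j}$; comparing the action of $T$ on $W_j$ with its transpose on $U/W_j^\perp\cong W_j^\vee$ shows $f_T$ has a square factor in $K[x]$, so $\Delta(f_T)=0$, which is not squarefree, and we are done. Otherwise $e_j$ vanishes at some closed point $v\in|C|$. Trivialising $\mathcal F$ and $\mathcal L$ near $v$, let $\overline T_v$ be the induced self-adjoint operator on $U_v$; then $e_j(v)=e_{2n+2-j}(v)=0$ makes the isotropic subspace $W_j|_v$ and its perpendicular $\overline T_v$-stable, and the same transpose comparison yields $\overline f_T|_v=\overline g^{\,2}\,\overline g_2$ with $\deg\overline g=j$ (read $\overline g_2=1$ if $j=n+1$). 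If we may take $j\ge2$ — equivalently, if some gap among $\mu_2-\mu_3,\dots,\mu_n-\mu_{n+1},\,2\mu_{n+1}-d$ is $<f$ — then $\overline f_T|_v$ has a square factor of degree $\ge2$, which forces $\operatorname{ord}_v\Delta(f_T)\ge2$; so $\Delta(f_T)$ is not squarefree.

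\emph{The remaining, delicate case.} Suppose $g_1=g_{2n+1}=\mu_1-\mu_2$ is the unique deficient gap, so we must take $j=1$ and $\overline f_T|_v=(x-a)^2q(x)$ with $\deg q=2n$. If $q(a)=0$ the root $a$ has multiplicity $\ge3$ and $\operatorname{ord}_v\Delta(f_T)\ge2$ as before. If $q(a)\ne0$, then the eigenvalue $a$ of $\overline T_v$ has eigenline $Y_1|_v$, and since the Gram matrix of the form in the flag basis is antidiagonal, the associated left eigenvector is the covector dual to $Y_{2n+2}|_v$; hence $\operatorname{adj}(aI-\overline T_v)$ is a scalar multiple of the matrix unit $E_{1,2n+2}$. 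Expanding $T=\overline T_v+\varpi_vT^{(1)}+\cdots$ over $\mathcal O_{K_v}$, the first-order variation $h_1(x)=-\operatorname{tr}\!\big(\operatorname{adj}(xI-\overline T_v)\,T^{(1)}\big)$ of $f_T$ therefore satisfies $h_1(a)=(\text{scalar})\cdot(T^{(1)})_{2n+2,1}=0$, precisely because the corner entry $T_{2n+2,1}$ is identically zero. Consequently $\operatorname{ord}_v f_T(a)\ge2$, the two roots of $f_T$ specialising to $a$ differ by $O(\varpi_v)$, and $\operatorname{ord}_v\Delta(f_T)\ge2$. In every case $\Delta(f_T)$ fails to be squarefree. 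The main obstacle is exactly this last case: there $\overline f_T|_v$ carries only a simple double root, so one cannot conclude from the special fibre alone, and the argument genuinely uses the rigidity of the canonical reduction — the forced vanishing of $T_{2n+2,1}$ — to pin the discriminant down; this is the feature that disappears on the ``Kostant'' stratum $2\mu_1-d=(2n+1)f$ of Proposition~\ref{factor through kostant 1}, where transversal curves do occur.
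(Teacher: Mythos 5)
Your proof is correct and follows the same overall strategy as the paper's: the hypothesis $2\mu_1-d<(2n+1)f$ forces some subdiagonal entry $e_j$ to be a section of a line bundle of positive degree, hence to vanish at some place $v$ (or identically), and the self-adjoint, almost-lower-Hessenberg shape of the section then forces $\operatorname{ord}_v\Delta\ge 2$. Where you genuinely diverge is in how that last estimate is obtained. The paper handles every $j$ uniformly via the identity $f_A=\det(B-xI)^2f_1+e_j\det(B-xI)f_2+e_j^2f_3$ (both mirror occurrences of $e_j$ vanish at $v$, so the correction linear in $e_j$ still carries one factor of $\det(B-xI)$), then substitutes $x\mapsto x+\alpha$ at a root $\alpha$ and reads off the order of the resultant. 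You split into cases: for $j\ge2$ you use only the special fibre, where $\bar f_T=\bar g^{\,2}\bar g_2$ with $\deg\bar g\ge2$ gives $\operatorname{ord}_v\Delta\ge\sum_\beta(m_\beta-1)\ge j\ge2$ --- genuinely cheaper than the paper's identity, though you assert the implication without justification (it follows from the isobaric weight $m(m-1)$ of the discriminant applied to each Hensel factor of $f_T$ at $v$). For $j=1$ you replace the identity by Jacobi's formula for the first-order variation of $f_T$, and the vanishing of $h_1(a)$ is exactly the vanishing of the corner entry $T_{2n+2,1}$ --- the same structural input that produces the paper's $e_j^2$ term, repackaged perturbatively. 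Two points you should make explicit, neither of which affects correctness: (i) the perturbative step needs $f_{T_0}(a)=0$ exactly, not merely mod $\varpi_v$; this holds because in equal characteristic you may take $T_0=\overline T_v\in M_{2n+2}(k_v)\subset M_{2n+2}(\mathcal O_{K_v})$ and $a=\overline T_{11}(v)\in k_v$, so that $(x-a)^2$ divides $f_{T_0}$ on the nose; (ii) when the eigenvalue $a$ has geometric multiplicity $2$, $\operatorname{adj}(aI-\overline T_v)$ is zero rather than a rank-one multiple of $E_{1,2n+2}$, so that degenerate subcase should be noted (the conclusion $h_1(a)=0$ is then immediate).
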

\begin{proof}
	As in the proof of Proposition \ref{factor through kostant 1}, any sections of $(\mathcal{E} \times^G V^{\text{reg}})\otimes \mathcal{F}$ are of the form:
	$$A=\begin{pmatrix}
	* & * & \cdots & * & * & *  \\
	x_1 & * & \cdots & * & * & *  \\
	0 & x_2 & \cdots & * & * & * \\
	\vdots & \vdots & \ddots & \vdots & \vdots & \vdots  \\
	0 & 0 & \cdots & x_2 & * & *  \\
	0 & 0 & \cdots & 0 & x_1 & * 
	\end{pmatrix}
	$$where $x_i \in H^0(C, X_i^{\vee} \otimes X_{i+1} \otimes \mathcal{F})$ for $1\leq i \leq n,$ and $x_{n+1} \in H^0(C, X_{n+1}^{\otimes -2} \otimes \mathcal{L} \otimes \mathcal{F}).$ By assumption, there exists one index $1\leq i\leq n+1$ such that $x_i(v) = 0$ for some $v \in |C|$ (since it is a section of a line bundle of positive degree). To estimate the order of $v$ in the discriminant of $A,$ we may assume that all entries of $A$ are in $\mathcal{O}_{K_v}.$ Now the determinant of $A$ can be expressed as
 \begin{equation*}
f_A(x) = \text{det}(A-xI) = \text{det}(B-xI_i)^2.f_1(x)+ x_i.\text{det}(B-xI_i).f_2(x) + x_i^2.f_3(x),
\end{equation*}
where $f_i(x) \in \mathscr{O}_{K_v}[x]$, and $B$ is a submatrix of $A$:
\[ B=\left( \begin{array}{cccccc}
* & * & \cdots & * & * & *  \\
x_1 & * & \cdots & * & * & *  \\
0 & x_2 & \cdots & * & * & * \\
\vdots & \vdots & \ddots & \vdots & \vdots & \vdots  \\
0 & 0 & \cdots & x_{i-2} & * & *  \\
0 & 0 & \cdots & 0 & x_{i-1} & * 
\end{array} \right). \]
By changing the variable $x$ to $x + \alpha$ for any root $\alpha \in \overline{K_v}$ of $\text{det}(B-xI_i)$ and working with an extended valuation in $K'=K_v(\alpha)$ of $\text{val}_v$, we further assume that 
$$f_A(x) =x^2.g_1(x)+ x.x_i.g_2(x) + x_i^2.f_3(x),$$ for some polynomial $g_i \in K'[x].$  Since $\text{det}(B-xI_i) \in \mathcal{O}_{K'}[x],$ we imply that $g \in \mathcal{O}_{K'}[x].$ Using the above form of $f_A$, it is easy to obtain that the resultant of $f_A$ and $f_A',$ and hence $\Delta(A),$ vanishes of order at least 2 at $v$. 
\end{proof}
Now we repeat the argument in the previous subsection to obtain the proof of Theorem \ref{stack version of main theorem 2} as follows: in the last case \ref{case 4}, we use Proposition \ref{regular locus in transversal case} instead of Proposition \ref{regular locus in general case}, and the average number in this subcase is $4.$ By Proposition \ref{ignore error term}, in the transversal case, we get an actual limit in contrast to the limsup in the general case. Hence, the following result is a direct corollary of Proposition \ref{regular locus in transversal case} and Theorem \ref{stack version of main theorem 2}:
\begin{corollary}
	We have the following lower bound of the average size of 2-Selmer groups:
	$$ \liminf_{d \rightarrow \infty} \frac{\sum_{H \in \mathcal{A}_{\leq d}}\frac{|Sel_2(H)|}{|\text{Aut}(H,\infty)|}}{\sum_{H \in \mathcal{A}_{\leq d}}\frac{1}{|\text{Aut}(H,\infty)|}} \geq 4. \prod_{v \in |C|} (1-\alpha_v) + 2,
	$$ where $\alpha_v$ is defined in Proposition \ref{regular locus in transversal case}, and note that $\lim_{q \rightarrow \infty} \prod_{v \in |C|} (1-\alpha_v) =1.$
\end{corollary}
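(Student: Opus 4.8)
The plan is to split $\mathcal{A}_{\leq d}$ into its transversal part --- where Theorem~\ref{stack version of main theorem 2} provides the \emph{exact} average $6$ --- and its complement, on which every $2$-Selmer group has size at least $2$ apart from a set of density zero. First I would pass to the tuple description: by Proposition~\ref{minimal data}, grouping the curves of $\mathcal{A}_{\leq d}$ according to their minimal line bundle $\mathcal{F}$ turns each of the two sums in the statement into a sum over minimal tuples $(\mathcal{F},\underline{c})$ weighted by $(q-1)^{-1}$, so that the ratio in question, which I call $\mathrm{Avg}_d$, satisfies
$$ \mathrm{Avg}_d \;=\; \frac{\sum_{\deg\mathcal{F}\leq d}\ \sum_{\underline{c}\in\mathcal{A}^{\text{min}}_{\mathcal{F}}(k)}|Sel_2(H_{\underline{c}})|}{\sum_{\deg\mathcal{F}\leq d}|\mathcal{A}^{\text{min}}_{\mathcal{F}}(k)|}\,. $$
A transversal tuple is automatically minimal: were $(\mathcal{F},\underline{c})$ non-minimal at a place $v$, then --- $\Delta$ being weighted homogeneous of weighted degree $(2n+1)(2n+2)$ --- the section $\Delta(\underline{c})$ would vanish at $v$ to order $\geq (2n+1)(2n+2)>2$, against Definition~\ref{Definition of transversal}. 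Hence $\mathcal{A}^{\text{sf}}_{\mathcal{F}}(k)\subseteq\mathcal{A}^{\text{min}}_{\mathcal{F}}(k)$, and I would split the inner sum into the transversal tuples and the rest.

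On the transversal tuples, Proposition~\ref{selmer group and the first coho} gives $|Sel_2(H_{\underline{c}})|=|H^1(C,J_{\mathcal{H}}[2])|$, so their contribution is exactly $\sum_{\deg\mathcal{F}\leq d}|\mathcal{M}^{\text{sf}}_{\mathcal{F}}(k)|$, which by Theorem~\ref{stack version of main theorem 2} equals $(6+o(1))\sum_{\deg\mathcal{F}\leq d}|\mathcal{A}^{\text{sf}}_{\mathcal{F}}(k)|$. On the non-transversal minimal tuples I would use that $|Sel_2(H)|\geq 2$ for all curves $H$ outside a set of density zero: the class $[\infty-\infty']\in J_H(K)$ is non-zero (a degree-$0$ divisor $\infty-\infty'$ cannot be principal on a curve of positive genus), and away from the thin locus where it is a torsion point it has infinite order, whence $J_H(K)/2J_H(K)\neq 0$ and $|Sel_2(H)|\geq|J_H(K)/2J_H(K)|\geq 2$; equivalently, the two Kostant sections $\kappa_1,\kappa_2$ furnish two distinct classes of $\mathcal{M}_{\alpha_H}=H^1(C,J_{\mathcal{H}}[2])$ lying in the image of $Sel_2(H)$. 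Bounding $|Sel_2(H_{\underline{c}})|$ below by $2$ on the good non-transversal tuples, by $1$ on the (negligible) exceptional ones, and dividing by the denominator, one obtains
$$ \mathrm{Avg}_d \;\geq\; \Bigl(\frac{\sum|\mathcal{M}^{\text{sf}}_{\mathcal{F}}(k)|}{\sum|\mathcal{A}^{\text{sf}}_{\mathcal{F}}(k)|}-2\Bigr)\cdot\frac{\sum|\mathcal{A}^{\text{sf}}_{\mathcal{F}}(k)|}{\sum|\mathcal{A}^{\text{min}}_{\mathcal{F}}(k)|} \;+\; 2 \;-\; o(1), $$
all sums being over $\deg\mathcal{F}\leq d$.

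To finish, I would compute the middle factor. By Proposition~\ref{regular locus in transversal case}(i) the proportion of transversal tuples inside $H^0(C,\mathcal{F}^{\otimes 2}\oplus\cdots\oplus\mathcal{F}^{\otimes 2n+2})$ tends to $\prod_{v\in|C|}(1-\alpha_v)$ as $\deg\mathcal{F}\to\infty$; summing over $\mathcal{F}$ of degree $\leq d$ (the totals being dominated by the top degree) and using $\mathcal{A}^{\text{sf}}_{\mathcal{F}}\subseteq\mathcal{A}^{\text{min}}_{\mathcal{F}}\subseteq\mathcal{A}_{\mathcal{F}}$, the middle factor has limit inferior at least $\prod_{v}(1-\alpha_v)$ (its exact limit is this times $\zeta_C((n+2)(2n+1))\geq 1$, by Proposition~\ref{regular locus in minimal case 1}, which may simply be discarded). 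Letting $d\to\infty$ in the displayed inequality and using Theorem~\ref{stack version of main theorem 2} gives $\liminf_d\mathrm{Avg}_d\geq(6-2)\prod_v(1-\alpha_v)+2=4\prod_v(1-\alpha_v)+2$, which is the assertion. The concluding remark $\lim_{q\to\infty}\prod_v(1-\alpha_v)=1$ follows from the estimate $\alpha_v=O(|k_v|^{-2})$ established in Proposition~\ref{regular locus in transversal case}: indeed $\prod_v(1-\alpha_v)\geq 1-\sum_v\alpha_v=1-O\bigl(\sum_v|k_v|^{-2}\bigr)=1-O(q^{-1})$.

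The step I expect to be the main obstacle is the lower bound $|Sel_2(H)|\geq 2$ away from a density-zero locus: one has to show that the set of curves for which $[\infty-\infty']$ is torsion --- equivalently, $2$-divisible --- in $J_H(K)$ is cut out by non-trivial algebraic conditions on the coefficients $\underline{c}$, hence has density zero in the height ordering, with enough uniformity to absorb the finitely many torsion orders relevant at each height. This parallels, but is somewhat more delicate than, the argument already given for the locus $H^0(K,J_H[2])\neq 0$. Everything else --- the translation between curves and minimal tuples, the splitting into transversal and non-transversal parts, and the comparison of sums over line bundles of bounded degree --- is routine given Propositions~\ref{minimal data}, \ref{selmer group and the first coho}, \ref{regular locus in transversal case}, \ref{regular locus in minimal case 1} and Theorem~\ref{stack version of main theorem 2}.
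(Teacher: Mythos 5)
Your overall architecture is the right one, and it is the only reading under which the paper's one\mbox{-}line justification (``direct corollary of Proposition~\ref{regular locus in transversal case} and Theorem~\ref{stack version of main theorem 2}'') can produce the number $4\prod_{v}(1-\alpha_v)+2$ rather than the weaker $6\prod_v(1-\alpha_v)$: the transversal curves contribute $6$ on average with relative density at least $\prod_v(1-\alpha_v)$, and every other curve must be made to contribute at least $2$. Your bookkeeping is also correct: transversal tuples are automatically minimal (the weighted homogeneity of $\Delta$ forces vanishing to order $\geq(2n+1)(2n+2)$ at a place of non-minimality), the $1/|\text{Aut}(H,\infty)|$ weights convert cleanly into tuple counts via Proposition~\ref{minimal data}, and discarding the factor $\zeta_C((n+2)(2n+1))\geq 1$ from Proposition~\ref{regular locus in minimal case 1} only weakens the bound in the allowed direction.

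The genuine gap is the one you flagged yourself, and it is not a technicality: the pointwise bound $|Sel_2(H)|\geq 2$ off a density-zero set is carrying the entire ``$+2$'' and is nowhere established. Two specific problems with your route. First, ``torsion --- equivalently, $2$-divisible'' is false ($2Q$ is $2$-divisible and of infinite order); this does not break your chain of implications, since ``$[\infty-\infty']$ non-torsion $\Rightarrow$ positive Mordell--Weil rank $\Rightarrow J_H(K)/2J_H(K)\neq 0$'' is sound, but it shows the condition you actually need to excise is ``$[\infty-\infty']$ torsion'', which is a \emph{countable} union of algebraic conditions (one for each order $m$), and making that union have density zero in the height ordering requires uniformity in $m$ that nothing in this paper provides; this is a substantive open step, not a routine one. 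Second, your fallback --- that $\kappa_1,\kappa_2$ give two distinct classes in $H^1(C,J_{\mathcal{H}}[2])$ ``lying in the image of $Sel_2(H)$'' --- does not close the gap: Proposition~\ref{selmer group and the first coho} bounds $|Sel_2(H)|$ \emph{above} by a multiple of $|H^1(C,J_{\mathcal{H}}[2])|$, and there is no map from $Sel_2(H)$ to $H^1(C,J_{\mathcal{H}}[2])$ in the paper under which distinctness over $C$ would force distinctness of Selmer elements. What is actually needed is that the classes of $0$ and $[\infty-\infty']$ are distinct in $J_H(K)/2J_H(K)\subseteq Sel_2(H)$, i.e.\ that $[\infty-\infty']\notin 2J_H(K)$, for a density-one set of curves. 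Since the paper itself supplies no argument for this either, you have correctly located the load-bearing unproven step; but as written your proposal does not prove the corollary.
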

\newpage

\bibliographystyle{alpha}
\bibliography{counting}

\end{document}